\theoremstyle{definition}
\newtheorem{theorem}{Theorem}[section]
\newtheorem{lemma}[theorem]{Lemma}
\newtheorem{proposition}[theorem]{Proposition}
\numberwithin{equation}{section}
\newtheorem{conjecture}[theorem]{Conjecture}  
\theoremstyle{definition}
\newtheorem{definition}[theorem]{Definition}
\newtheorem{remark}[theorem]{Remark}
\theoremstyle{remark}
\newcommand{\tdiv}{\text{div}}
\newcommand{\B}{Y}
\newcommand{\Bf}{\mathbf{Y}}
\newcommand{\sgn}{\text{sgn}}
\newcommand{\Pic}{\text{Pic}}
\newcommand{\ba}{\mathbf{a}}
\newcommand{\bb}{\mathbf{b}}
\newcommand{\bu}{\mathbf{u}}
\newcommand{\bv}{\mathbf{v}}
\newcommand{\bG}{\mathbf{G}}
\newcommand{\bY}{\mathbf{Y}}
\newcommand{\bx}{\mathbf{x}}
\newcommand{\bz}{\mathbf{z}}
\newcommand{\bw}{\mathbf{w}}
\newcommand{\BA}{{\mathbb{A}}}
\newcommand{\BC}{{\mathbb{C}}}
\newcommand{\BE}{{\mathbb{E}}}
\newcommand{\BF}{{\mathbb{F}}}
\newcommand{\BI}{{\mathbb{I}}}
\newcommand{\BL}{{\mathbb{L}}}
\newcommand{\BN}{{\mathbb{N}}}
\newcommand{\BP}{{\mathbb{P}}}
\newcommand{\BQ}{{\mathbb{Q}}}
\newcommand{\BR}{{\mathbb{R}}}
\newcommand{\BZ}{{\mathbb{Z}}}
\newcommand{\CB}{{\mathfrak B}}
\newcommand{\CD}{{\mathcal D}}
\newcommand{\CF}{{\mathcal F}}
\newcommand{\CO}{{\mathcal O}}
\newcommand{\CU}{{\mathcal U}}
\newcommand{\pt}{{\mathsf{pt}}}
\newcommand{\td}{{\mathrm{td}}}
\newcommand{\bp}{{\mathbf{p}}}
\newcommand{\bc}{{\mathbf{c}}}
\DeclareFontFamily{OT1}{rsfs}{}
\DeclareFontShape{OT1}{rsfs}{n}{it}{<-> rsfs10}{}
\DeclareMathAlphabet{\curly}{OT1}{rsfs}{n}{it}
\newcommand{\Aut}{\operatorname{Aut}}
\newcommand\Id{\operatorname{Id}}
\newcommand{\bt}{\mathbf{t}}
\newcommand{\bmm}{\mathbf{m}}
\newcommand{\bn}{\mathbf{n}}
\newcommand{\QQ}{\mathbb{Q}}
\newcommand{\ZZ}{\mathbb{Z}}
\newcommand{\Hilb}{\mathsf{Hilb}}
\newcommand{\Sym}{\mathrm{Sym}}
\newcommand{\Tr}{\mathrm{Tr}}
\newcommand{\Tot}{\mathrm{Tot}}
\newcommand{\ch}{\mathsf{ch}}
\newcommand{\wt}{\mathsf{wt}}
\newcommand{\eval}[1]{\left. #1 \right\rvert}
\newcommand{\detty}[1]{\begin{Vmatrix}#1\end{Vmatrix}}
\begin{document}
	\baselineskip=14.5pt
	\title{stable pairs on local curves and Bethe roots}

	\author{Maximilian Schimpf}
	
	\address{Universit\"at Heidelberg, Institut f\"ur Mathematik}
	\email{mschimpf@mathi.uni-heidelberg.de}
	\date{\today}
	
	\begin{abstract}
		We give an explicit formula for the descendent stable pair invariants of all (absolute) local curves in terms of certain power series called Bethe roots, which also appear in the physics/representation theory literature. We derive new explicit descriptions for the Bethe roots which are of independent interest. From this we derive rationality, functional equation and a characterization of poles for the full descendent stable pair theory of local curves as conjectured by Pandharipande and Pixton. We also sketch how our methods give a new approach to the spectrum of quantum multiplication on $\Hilb^n(\BC^2)$.
	\end{abstract}
	\maketitle
	
	\setcounter{tocdepth}{1} 
	\tableofcontents
	\section{Introduction}
	\subsection{Context}\label{sect: beginning}
	Let $X$ be a smooth quasi-projective threefold with a torus $T = (\BC^*)^n$ acting on it so that the fixed locus $X^T$ is projective. Let furthermore $\beta\in H_2(X,\ZZ)$ be a curve class.\\
	One of the most commonly used ways of counting curves of class $\beta$ is via \textit{stable pair theory}, which is defined using the moduli space $P_n(X,\beta)$ of compactly supported stable pairs $[\CO_X \to F]$ with $\chi(F) = n \in \BZ$ and $\ch_2(F) = \beta$. Using the localization formula \cite{localization} and \cite{BehrendFantechi} one can define a virtual class
	\[
		[P_n(X,\beta)]^{vir,T}\in H_{2d_\beta}^T (P_n(X,\beta))_{loc}
	\]
	where $d_\beta \coloneqq \int_\beta c_1(X)$. We furthermore denoted by
	\[
		H_*^T(P)_{loc}\subset H_*^T(P)\otimes_{H^{-*}_T(\pt)} \text{Frac}\left(H^{-*}_T(\pt)\right) 
	\]
	the homogeneous localization of $H_*^T(P)$ as a graded $H^{-*}_T(\pt)$-module - one can similarly define $H^*_T(P)_{loc}$ as the homogeneous localization of $H^{*}_T(\pt)$. Recall also that $H^*_T(\pt)=\QQ[t_1,\ldots,t_n]\eqqcolon \QQ[\bt]$ with $\bt = (t_1,\ldots,t_n)$ and $t_i\coloneqq c_1(L_i)\in H^2_T(\pt)$, where $L_i$ is the standard representation of $T$ associated to its $i$-th coordinate. \\
	Furthermore, let $[\CO \to \BF]$ denote the universal stable pair on $P_n(X,\beta) \times X$, and consider the diagram
	\[
	\begin{tikzcd}
		P_n(X,\beta) \times X \ar{r}{\pi_X} \ar{d}{\pi_{P}} & X \\
		P_n(X,\beta)
	\end{tikzcd}
	\]
	The descendent $\ch_z(\gamma)\in H_T^*(X\times P_n(X,\beta))_{loc}\otimes \QQ[[z]]$ of a class $\gamma \in H^{*}_T(X)$ is defined by\footnote{The pushforward in cohomology is defined as the dual of the pullback in homology which exists as $\pi_P$ is flat \cite[Theorem VIII.5.1]{flatpullbackinhomology}.}
	\[ \ch_z(\gamma) \coloneqq \sum_{k\geq 0} z^k \pi_{P \ast}\left( \ch_k( \BF )\cdot\pi_X^{*}(\gamma)\right)  .\]
	Descendent invariants are given by
	\begin{align}\label{locPT}
		\langle \ch_{z_1}(\gamma_1) \ldots \ch_{z_n}(\gamma_n) \rangle_{\beta}^{X,T} \coloneqq \sum_{m\in\ZZ} p^m \int_{[P_m(X,\beta)]^{vir,T}}\ch_{z_1}(\gamma_1) \ldots \ch_{z_n}(\gamma_n)
	\end{align}
	and take values in $\QQ(\bt)((p))[[\bz]]$. We will often omit $T$ if it is clear from context which torus is used. 
	There are also \textit{connected} stable pair invariants
	\begin{align}\label{locPTconn}
		\langle \ch_{z_1}(\gamma_1) \ldots \ch_{z_n}(\gamma_n) \rangle_{\beta}^{X,T, \text{conn}}\in \QQ(\bt)((p))[[\bz]]
	\end{align}
	which are defined so that the formula
	\begin{align}\label{PTconndisconn}
		\langle \ch_{z_1}(\gamma_1) \ldots \ch_{z_n}(\gamma_n) \rangle_{\beta}^{X,T} = \sum_{\substack{P \text{ partition of }\Set{1,\ldots,n}\\ \beta = \sum_{I\in P} \beta_I}} \frac{\sgn(P)}{\#\Aut(P,(\beta_I)_I)} \prod_{I\in P} \langle \prod_{i\in I} \ch_{z_i}(\gamma_i)\rangle_{\beta_I}^{X,T,\text{conn}}
	\end{align}
	which also relates connected and disconnected Gromov-Witten invariants, holds by fiat. The sum runs over unordered set partitions and $\Aut(P,(\beta_I)_I)$ is the group permuting empty members of $P$ with the same $\beta_I$. Furthermore, $\sgn(P)$ is the sign that arises out of changing the order of the $\ch_{z_i}(\gamma_i)$.\\
	Descendent invariants are expected to be highly constrained:
	\begin{conjecture}[\cite{RahulSurvey}]\label{conj: bigconj}
		For any $X$ and $T$ as above:
		\begin{enumerate}
			\item\label{conj: rationality} The $\bz$-coefficients of \eqref{locPT} and \eqref{locPTconn} are Laurent expansions in $p$ of rational functions i.e. elements in $\QQ(\bt,p)$.
			\item\label{conj: powerminusone} Under the variable change $p\mapsto p^{-1}$ these rational functions transform as follows: 
			\[
				\eval{\langle \ch_{z_1}(\gamma_1) \cdots \ch_{z_n}(\gamma_n) \rangle_{\beta}}_{p\mapsto p^{-1}}^{X,T,*} = p^{-d_\beta} \langle \ch_{-z_1}(\gamma_1) \cdots \ch_{-z_n}(\gamma_n) \rangle_{\beta}^{X,T,*}
			\]
			\item\label{conj: poles} The $\bz$-coefficients of \eqref{locPTconn} may have $p$-poles only at $p=0$ and where $-p$ is an $n$-th root of unity for $1\leq n \leq \text{div}(\beta)$. Here, we took $\text{div}(\beta)$ to be the multiplicity of $\beta$ in the integral cone of curves
			\[
				NE(X) \coloneqq \Set{[C_i] | C_i\subset X \text{ subcurve}}\subset H_2(X,\BZ)/torsion.
			\]
		\end{enumerate}
	\end{conjecture}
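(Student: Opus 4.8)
The plan is to prove the conjecture for the (absolute) local curves treated in this paper, i.e.\ total spaces $X = \Tot(L_1 \oplus L_2 \to C)$ with $C$ a smooth projective curve of genus $g$ and $T = (\BC^*)^2$ scaling the two line-bundle factors. First I would set up the degeneration/TQFT formalism: degenerating $C$ into a chain of $\BP^1$'s with relative fibers and applying the equivariant gluing formula reduces every descendent series in \eqref{locPT}--\eqref{locPTconn} to a finite combination of matrix coefficients of explicit operators (caps, tubes and pants) acting on the direct sum $\bigoplus_{n\geq 0} H^*_T(\Hilb^n(\BC^2))_{loc}$. In this dictionary the Euler-characteristic variable $p$ becomes, up to sign, the quantum parameter of quantum multiplication, the genus and the levels $(\deg L_1, \deg L_2)$ determine which operators appear, and the descendents $\ch_k(\gamma)$ act by explicit cohomological operators on Fock space.

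The heart of the argument is to diagonalize these operators via the Bethe ansatz. Quantum multiplication on $\Hilb^n(\BC^2)$ is governed by a quantum integrable system whose spectrum is parametrized by Bethe roots, i.e.\ solutions of the Bethe equations, and I would show that the descendent, gluing and cap operators are simultaneously block-diagonalized in the associated eigenbasis. This expresses each invariant as a spectral sum whose summands are explicit functions of the Bethe roots, with all $p$-dependence flowing through the eigenvalues. Since an individual Bethe root is only a power series in $p$, the crucial structural input is the new explicit description of the Bethe roots, which lets one recognise the full spectral sum — a symmetric function of the roots — as an element of $\QQ(\bt, p)$; this yields part \eqref{conj: rationality}.

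For part \eqref{conj: powerminusone} I would track the effect of $p \mapsto p^{-1}$ through the Bethe equations: this substitution should correspond to an explicit involution of the set of Bethe roots, which, combined with the reflection $z_i \mapsto -z_i$ on descendents and the normalization $p^{-d_\beta}$, produces the functional equation. For part \eqref{conj: poles}, the \emph{connected} invariants of a class $\beta$ single out the block governed by the degree $d = \text{div}(\beta)$, whose associated Bethe system has length $d$; the admissible $p$-poles are exactly the loci where the spectral decomposition degenerates, i.e.\ where Bethe roots collide or eigenvalues cease to be distinct. The explicit formulas should show these loci occur only at $p = 0$ and where $-p$ is an $n$-th root of unity with $n \leq \text{div}(\beta)$, the bound reflecting how $\text{div}(\beta)$ constrains the length of the relevant Bethe system.

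The main obstacle, and the technical core, is making the Bethe roots explicit enough to carry out these three deductions. The Bethe ansatz presents eigenvalues only implicitly, as solutions of transcendental equations, so a priori the spectral sums are merely power series in $p$ with no manifest rationality or pole control; converting this implicit data into closed-form power series — the ``new explicit descriptions'' — is the crux. Once the roots are written explicitly one can (i) resum the spectral decomposition into a rational function, (ii) exhibit the $p\mapsto p^{-1}$ symmetry, and (iii) locate the poles. I expect step (i), proving that the a priori transcendental spectral sum collapses to a rational function uniformly in $\beta$, to be the most delicate, since it requires controlling the interaction between the explicit Bethe roots and the descendent insertions across all $n$ at once.
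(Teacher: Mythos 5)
Your proposal is essentially the Pandharipande--Pixton degeneration/TQFT strategy upgraded by a Bethe-ansatz diagonalization, and this is \emph{not} the route the paper takes --- in fact the introduction explains why that route was abandoned: the gluing formalism requires inverting the cap matrix, and pole bounds on the entries of a matrix do not survive inversion, which is exactly what blocks part \eqref{conj: poles} for general descendents. The paper instead localizes directly on the \emph{absolute} local curve: the $T$-fixed loci are double nested Hilbert schemes $C^{[\bn]}$, which are shown (Proposition \ref{prop: irrcomps}) to be pure of expected dimension and birationally covered by products of symmetric products of $C$; the resulting integrals are computed by the classical intersection theory of $\Sym^n C$, and the Bethe roots then \emph{emerge} from a multivariate Lagrange inversion of the generating series over reverse plane partitions. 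In particular the Bethe ansatz is an output of the computation, not an input, and Theorem \ref{thm: eigenvalues} (the diagonalization you want to start from) is \emph{deduced} from the main formula via degeneration, so taking it as the starting point is either circular or forces you to import it from the representation-theory literature.

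Beyond the difference in route, there are concrete gaps. First, your claim that the descendent, gluing and cap operators are ``simultaneously block-diagonalized'' in the Bethe eigenbasis is false for descendents of $1$ and of the odd classes $\alpha_l,\beta_l$: in the final formula \eqref{eqn: Plambdamaindef} these contribute the derivative operators $\nabla_i^{\Bf}$ and the determinantal brackets $\left(\bz,\bw;\bx\mid\Bf\right)_{M(\Bf)^{-1}}$, i.e.\ genuinely off-diagonal data whose computation is most of the work; only the stationary descendents $\ch_z(\pt)$ act by scalars $E(z,\Bf^\lambda)$. Second, rationality does not follow from the explicit power-series descriptions of the roots as you suggest, but from the algebraic classification (Theorem \ref{conj: allbethes}) of \emph{all} fully admissible solutions, which makes the sum over $\lambda$ Galois-invariant. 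Third, the pole statement is not a matter of ``where Bethe roots collide'': the coefficients multiplying the eigenvalues are rational (not polynomial) functions of the roots, so one has no a priori control of their poles; the paper needs the multivariate refinement $\Bf^\lambda(\bp)$, an analytic-continuation argument for the roots off the locus $\prod_{i\in S}(-p_i)=1$, and a separate holomorphy/nonvanishing analysis of the factors $A$, $B_1$, $B_2$ --- and even then genus $0$ must be handled by citing the earlier even-descendent result because the nonvanishing of $A(\Bf^\lambda(\bp))$ could not be established. None of these difficulties is visible from, or resolved by, the spectral picture you describe.
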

	\begin{remark}
		\begin{enumerate}
		\item Conjecture \ref{conj: bigconj} was historically a big driving factor for the development of stable pair theory. Indeed, it was noted in \cite{MNOP2} that the corresponding Donaldson-Thomas descendent theory is irrational. Stable pairs are much better behaved - in particular rationality of the generating series was first conjectured in \cite{PT} with first examples being computed in \cite{PT&BPS}. For toric threefolds and certain complete intersections the rationality was proved in \cite{token7, token4}. The $p\mapsto p^{-1}$ symmetry was first formulated in the Calabi-Yau case in \cite{MNOP1} and related to Serre duality in \cite{PT&BPS}. This led to a proof of rationality and symmetry in the Calabi-Yau case in \cite{BridgeRat, TodaRat} based on the Behrend function approach to enumerative geometry. Unfortunately, this approach does not generalize to the non-Calabi-Yau case. Conjecture \ref{conj: bigconj} was first stated in its full generality in \cite{RahulSurvey}.
		\item Our account of Conjecture \ref{conj: bigconj} differs slightly from the one in \cite{RahulSurvey}, where it was stated only for disconnected invariants\footnote{The present reformulation was suggested by Rahul Pandharipande.}. The connected and disconnected versions of \eqref{conj: rationality} and \eqref{conj: powerminusone} are easily seen to be equivalent. However, the disconnected version of \eqref{conj: poles} does not hold as one can choose $X$ and $\beta$ so that $\beta = \beta_1+\beta_2$ for $\beta_1,\beta_2$ disjoint, independent rigid subcurves satisfying $\tdiv(\beta)=\gcd\left(\tdiv(\beta_1),\tdiv(\beta_2)\right)<\tdiv(\beta_1)$. In this case
		\[
			\langle\emptyset\rangle_\beta^{X} = \langle\emptyset\rangle_{\beta_1}^{X} \cdot \langle\emptyset\rangle_{\beta_2}^{X}
		\]
		has too many poles whereas
		\[
			\langle\emptyset\rangle_\beta^{X,\text{conn}} = 0
		\]
		does not. From \eqref{PTconndisconn} and \eqref{conj: poles} one can however deduce a disconnected version of \eqref{conj: poles} in which $\text{div}(\beta)$ is replaced by
		\[d(\beta) \coloneqq \max\Set{m| \beta = m\beta_1+\beta_2\text{ for }\beta_1,\beta_2\in NE(X), \beta_1> 0}.\]
		Note that these problems do not occur if $NE(X)$ is generated by a single curve as in this case $\text{div}(\beta)=d(\beta)$ and the connected and disconnected versions of Conjecture \ref{conj: bigconj} become equivalent.
		\end{enumerate}
	\end{remark}
	A class of examples that in curve counting is seen as particularly indicative of the general case are \textit{local curves}. By local curves we mean threefolds that are total spaces $X=\Tot_C(L_1\oplus L_2)$ with $C$ a smooth projective curve and $L_1,L_2$ line bundles on $C$. These are then equipped with the standard $T=(\BC^*)^2$-action acting on fibers of $X$. We will always identify a curve class $\beta$ on $X$ with its multiplicity $d = \tdiv(\beta) = d(\beta)$. Indeed, it was observed in \cite{GoJohnnyGo} that one can reduce the GW/PT correspondence for all semi-Fano threefolds to the special case of local curves, which had been checked by explicit computation in \cite{HisNameWasBryan}. In \cite{CopyTheJohnny} we further use these methods to deduce the disconnected version of Conjecture \ref{conj: bigconj} in the absence of descendents. If one could extend the methods of \cite{GoJohnnyGo} to all descendents, one could likely prove Conjecture \ref{conj: bigconj} provided that one can explicitly check it for local curves. Indeed, this check has been partially done in \cite{PaPixRat,PaPixStat}, where \eqref{conj: rationality} is shown for all descendents and \eqref{conj: powerminusone} and \eqref{conj: poles} for stationary and even descendents respectively. Their proof relies on a subtle pole cancellation property of the stable pair vertex as well as the observation made in \cite{virasorooftarget} that curve counting invariants of (local) curves can be effectively computed via degeneration, monodromy invariance and localization. However, it does not seem that one can show \eqref{conj: powerminusone} and \eqref{conj: poles} for all descendents using this approach\footnote{For the pole restriction, the problem is that the algorithm given in \cite{virasorooftarget,PaPixRat,PaPixStat} relies on inverting the cap matrix (c.f. \cite[Section 9.1]{PaPixRat}). However, the fact that the entries of a matrix only have certain poles does not imply that the entries of the inverse also only have said poles.}
	In this paper we replace the aforementioned strategy by a different approach similar to \cite{MonavariOG}, which allows us to show:
	\begin{theorem}\label{thm: structurethm}
		Conjecture \ref{conj: bigconj} holds for all descendent invariants on local curves.
	\end{theorem}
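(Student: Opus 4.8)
The plan is to make the statement computable by first reducing it to a finite linear-algebra problem on a Fock space, and then importing the required analytic control from an explicit study of the Bethe roots. First I would set up the degeneration/TQFT formalism for the descendent theory of local curves. Writing $X = \Tot_C(L_1\oplus L_2)$ with $g = g(C)$ and levels $(k_1,k_2)=(\deg L_1,\deg L_2)$, I would degenerate the base curve $C$ into a nodal union of genus-$0$ components carrying the marked points of the descendent insertions, and split the two line bundles into elementary level pieces. By the degeneration formula the full capped descendent series in curve class $\beta$ of multiplicity $d=\tdiv(\beta)$ factors through gluing along the relative divisors, and the gluing is governed by the localized equivariant cohomology $\bigoplus_n H^*_T(\Hilb^n(\BC^2))$ of Hilbert schemes of points in the plane. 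Concretely, the degree-$d$ invariant becomes a matrix element of a product of three families of operators on this Fock space: a genus-raising handle operator $G$, level-shifting operators recording $(k_1,k_2)$, and descendent operators encoding each $\ch_k(\gamma)$.

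Second, I would diagonalize. The handle and level operators all lie in the commutative algebra of quantum multiplication on $\Hilb^n(\BC^2)$, the underlying quantum integrable system, so after localization they are simultaneously diagonalizable, with a joint eigenbasis whose eigenvalues are explicit functions of the Bethe roots. In this eigenbasis the geometric part of the series collapses to a finite sum, over the $n=d$ eigenvectors, of products of eigenvalues weighted by the descendent matrix elements between eigenvectors. Thus the entire invariant is expressed, coefficient by coefficient in $\bz$, as a finite Bethe-root sum; computing the descendent matrix elements in terms of the Bethe vectors is exactly where the new explicit descriptions of the Bethe roots advertised in the abstract enter.

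Third, the three assertions of Conjecture \ref{conj: bigconj} are read off from these explicit formulas. Rationality \eqref{conj: rationality} follows because the eigenvalues and descendent matrix elements occur in complete Galois-symmetric families of Bethe roots, so that each $\bz$-coefficient is a symmetric rational function of them and hence lies in $\QQ(\bt,p)$. The functional equation \eqref{conj: powerminusone} comes from a symmetry of the Bethe equations under $p\mapsto p^{-1}$, which permutes the eigenbasis and carries $\ch_{z}$ to $\ch_{-z}$ up to the predicted factor $p^{-d_\beta}$. Finally, the pole bound \eqref{conj: poles} is obtained by locating the $p$-singularities of the Bethe roots directly: the only degenerations of the Bethe equations, namely collisions of roots, occur at $p=0$ and where $-p$ is an $n$-th root of unity with $1\leq n\leq d$, and one checks that the symmetric sums stay regular away from these loci.

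The main obstacle is exactly the pole restriction \eqref{conj: poles}. The earlier cap-matrix approach fails here because inverting a matrix does not preserve its pole locus; the way around this is precisely the explicit description of the Bethe roots, which makes the set of admissible $p$-poles manifest and lets one rule out spurious poles after summing over the eigenbasis. Establishing those descriptions, and proving that the symmetric combinations arising from the descendent matrix elements acquire no poles beyond those forced by root collisions, is the technical heart of the argument; by contrast the Galois-symmetry bookkeeping behind rationality and the $p\mapsto p^{-1}$ symmetry should be comparatively formal once the eigenbasis picture is in place.
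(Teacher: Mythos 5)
Your proposal follows the degeneration/TQFT route of \cite{virasorooftarget,PaPixRat,PaPixStat}: degenerate $C$ to genus-$0$ pieces, express the invariant as a matrix element of handle, level and descendent operators on $\bigoplus_n H_T^*(\Hilb^n(\BC^2))$, and diagonalize in the Bethe eigenbasis. This is precisely the strategy the paper sets out to \emph{replace}, and the reason is a gap that your proposal does not close. The handle and level operators are indeed simultaneously diagonalizable with Bethe-root eigenvalues, but the descendent operators are \emph{not} diagonal in that basis, and the whole content of the theorem is hidden in your phrase ``computing the descendent matrix elements in terms of the Bethe vectors.'' Controlling the $p$-poles of those off-diagonal matrix elements is exactly as hard as the original problem: the known algorithm produces them by inverting the cap matrix, and inversion does not preserve pole loci. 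This is why \cite{PaPixRat,PaPixStat} only obtain \eqref{conj: powerminusone} for stationary and \eqref{conj: poles} for even descendents. The situation is worse for descendents of the odd classes $\alpha_l,\beta_l$, which do not distribute well under degeneration to rational components; moreover the relative local-curve moduli needed for the gluing do not admit the fixed-locus description that makes anything computable here (the paper explicitly notes that relative local curves are out of reach of its methods). The paper instead works \emph{absolutely}: the $T$-fixed loci of $P_n(X,d)$ are double nested Hilbert schemes $C^{[\bn]}$, shown to be pure of expected dimension with components indexed by tuples $\bmm\vdash\bn$ (Proposition \ref{prop: irrcomps}); the localization integrals are then evaluated on products of symmetric powers $C^{(\bmm)}$ using the intersection theory of $\Sym^m C$ (handling the odd classes via a self-avoiding-flow determinant identity, Lemma \ref{lemma: alphabetaintegrals}), and multivariate Lagrange inversion converts the resulting generating series into an evaluation at the Bethe roots, giving the closed formula of Theorem \ref{thm: main}. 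The eigenvalue statement you want to start from (Theorem \ref{thm: eigenvalues}) is a \emph{consequence} of that formula in the paper, not an input.

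Your reading of how the three parts of Conjecture \ref{conj: bigconj} follow is broadly right in spirit (Galois symmetry of the full set of fully admissible Bethe solutions for \eqref{conj: rationality}, the involution $\B_i\mapsto t_1+t_2-\B_i$, $p\mapsto p^{-1}$ for \eqref{conj: powerminusone}), but the pole bound \eqref{conj: poles} is not reducible to ``the roots only collide at roots of unity.'' The closed formula involves genuinely rational functions of the roots --- $A(\Bf)$, $B_1(\Bf)$, $B_2(\Bf)$ and the inverse Jacobian $M(\Bf)^{-1}$ --- whose poles are not controlled by regularity of the roots alone. The paper has to deform to multivariate Bethe roots $\Bf^\lambda(\bp)$ with one variable per box, prove analytic continuation off the locus $\prod_{i\in S}(-p_i)=1$ (Lemma \ref{lemma: betheholopole}), show separately that $A$ extends and $B_1,B_2$ extend without zeros, and even then the non-vanishing of $A(\Bf^\lambda(\bp))$ in genus $0$ is not established and the paper falls back on \cite[Theorem 5]{PaPixRat} for that case. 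None of this machinery is visible in your outline, so as written the argument for \eqref{conj: poles} --- the part you correctly identify as the main obstacle --- would not go through.
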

	Relative local curves are conspicuously missing here as the fixed locus of the corresponding stable pair moduli space does not admit as nice of a description as in Section \ref{sect: Fixed}. But for absolute local curves, Theorem \ref{thm: main} gives a closed formula for all stable pair invariants, from which Conjecture \ref{conj: bigconj} can be deduced. Before we state that formula, we must first introduce its constituents: The Bethe roots.
	\subsection{Bethe roots}
	A commonly used tool in the theory of integrable systems is the \textit{algebraic Bethe Ansatz}, which goes back to \cite{StoneAge, BetheOG2}. The key observation is that many integrable systems have an associated system of polynomial equations called \textit{Bethe equations} whose solutions allow one to diagonalize the integrals of motion of that system - see \cite{IntroBethe} for an introduction. The connection to enumerative geometry was first observed in \cite{NS1,NS2}. The Bethe equations most relevant to us come from the quantum intermediate long wave system ($\text{ILW}_1$ in the notation of \cite{litvinov,litvinov2}), which are as follows:
	Let $K$ be the field of Puiseux series
	\[
	K = \overline{\QQ(t_1,t_2)}\{\{p\}\} \coloneqq \bigcup_{n\geq 1} \overline{\QQ(t_1,t_2)}((p^{1/n}))
	\] 
	with $\overline{\BQ(t_1,t_2)}$ the algebraic closure of $\BQ(t_1,t_2)$.  In particular, recall that $K$ is algebraically closed \cite[Corollary 13.15]{Eisenbud}. For fixed $d\geq 1$ we call a tuple $\Bf\in K^d$ \textit{admissible} if $\B_{i}\neq 0,t_1+t_2$ for any $i$ and $\B_i-\B_{i'}\neq t_1,t_2,t_1+t_2$ for all $i\neq i'$. We are then interested in certain admissible tuples satisfying
	\begin{align}\label{eqn: Bethe}
		p = F_{i}(\Bf) 
	\end{align}
	for all $i=1,\ldots,d$ where 
	\begin{equation}\label{eqn: Bethe_formula}
		F_{i}(\Bf) \coloneqq \frac{\B_{i}}{t_1+t_2-\B_{i}}\prod_{\substack{i'\neq i\\0\leq a,b,c\leq 1\\(a,b)\neq (0,0)}} \Big((-1)^c(at_1+bt_2)+ \B_{i'}-\B_{i} \Big)^{(-1)^{a+b+c}}.
	\end{equation}
	From now on we will simply refer to \eqref{eqn: Bethe} as \textit{the Bethe equations}. The particular solutions of these equations that are of interest to us are characterized as follows:
	\begin{theorem}\label{thm: Bethechar}
		For any partition $\lambda$ of size $d$ there is a tuple $\Bf^\lambda \coloneqq \left(\B^\lambda_\Box(p)\right)_{\Box\in\lambda}$ of power series $\B^\lambda_\Box(p)\in \BQ(t_1,t_2)[[p]]$ indexed by the boxes in the Young diagram\footnote{c.f. Section \ref{sect: notation} for the relevant notation.} of $\lambda$ which is uniquely determined by any of the following equivalent descriptions:
		\begin{enumerate}
			\item\label{cond: innit} It is the unique admissible solution of \eqref{eqn: Bethe} in $K^d$ so that 
			\begin{equation}\label{eqn: initialshit}
				\B_{(i,j)}^\lambda(p) = -it_1-jt_2+\CO\left(p^{>0}\right).
			\end{equation}
			for any box $(i,j)\in\lambda$.
			\item\label{cond: recursion} Let the sequence $\bv^n = \left(v_\Box^{n}(p)\right)_{\Box\in\lambda}$ of tuples of power series $v_\Box^{n}(p)\in\QQ(t_1,t_2)[[p]]$ be defined by
			\[
			v^{0}_\Box(p) \coloneqq 0
			\]
			for $n=0$ and for $n>0$ we recursively set
			\[
			v^{n}_{\Box}(p) \coloneqq \frac{p}{\widetilde{F}^\lambda_{\Box}(\Bf^{(\lambda)}(\bv^{n-1}))}
			\]
			where
			\[
			\widetilde{F}^\lambda_{\Box}(\Bf) = (-1)^{d-1}\frac{\B_{\Box}^{1-\delta_{\Box,(0,0)}}}{t_1+t_2-\B_{\Box}} \prod_{\substack{\Box\neq \Box'\in\lambda\\0\leq a,b,c\leq 1\\ (a,b)\neq (0,0)\\ \Box'\neq \Box + (-1)^c(a,b) }}\left(at_1+bt_2+(-1)^c (\B_{\Box'}-\B_{\Box} )\right)^{(-1)^{a+b+c}}
			\]
			and $\Bf^{(\lambda)}(\bv) = \left(\B_\Box^{(\lambda)}(\bv)\right)_{\Box\in\lambda}$has entries given by
			\begin{align}\label{eqn: B(k) defn}
				\B^{(\lambda)}_{(i,j)}(\bv) \coloneqq -it_1-jt_2+\sum_{\substack{\lambda/\mu\text{ conn. skew}\\ (i,j) \in \lambda/\mu}} \prod_{\Box\in \lambda/\mu} v_\Box
			\end{align}
			with the sum running over all connected skew partitions contained in $\lambda$.
			We now have \footnote{All factors occuring in $\widetilde{F}^\lambda_{(i,j)}(\Bf^{(\lambda)}(\bv^{n-1}))$ have a non-zero $p^0$-coefficient. It follows from this that $v_\Box^{n}(p) - v_\Box^{n-1}(p) = \mathcal{O}(p^{n})$ and hence $\B^{(\lambda)}_\Box(\bv^{n}) - \B^{(\lambda)}_\Box(\bv^{n-1}) = \mathcal{O}(p^{n})$ for all $n$. Therefore the limit exists.}
			\[
			\B_\Box^\lambda(p) = \lim_{n\to\infty} \B^{(\lambda)}_\Box(\bv^{n}).
			\]
			\item\label{cond: closedform} One has the following closed formula:
			\[
			\B_\Box^\lambda(p) = 
			[\bv^0]\left(\detty{
				\frac{\partial F_\Box(\Bf^{(\lambda)}(\bv))/\partial v_{\Box'}}{F_\Box(\Bf^{(\lambda)}(\bv))}}
			\cdot\prod_{\Box\in\lambda} \frac{1}{1-p\cdot F_\Box(\Bf^{(\lambda)}(\bv))} \right)
			\]
			where $\detty{\ldots}$ denotes the determinant of a matrix and $[\bv^0]$ means taking the coefficient of $\prod_{\Box\in\lambda} v_\Box^0$ in the expression to the right, all of whose $p$-coefficients turn out to be Laurent series in $\bv$. Furthermore, $\Bf^{(\lambda)}(\bv)$ is as in \eqref{eqn: B(k) defn}.
		\end{enumerate}
	\end{theorem}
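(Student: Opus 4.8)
The plan is to treat description \eqref{cond: recursion} as the computational engine and to deduce everything from a single algebraic identity that turns the Bethe equations \eqref{eqn: Bethe} into the contracting recursion of \eqref{cond: recursion}. Abbreviating $S_\Box(\bv)\coloneqq\sum_{\lambda/\mu\text{ conn.},\, \Box\in\lambda/\mu}\prod_{\Box'\in\lambda/\mu}v_{\Box'}$, so that $\B^{(\lambda)}_{(i,j)}(\bv)=-it_1-jt_2+S_{(i,j)}(\bv)$ by \eqref{eqn: B(k) defn}, the central claim I would establish first is the factorisation
\begin{equation*}
F_\Box\big(\Bf^{(\lambda)}(\bv)\big)=v_\Box\cdot\widetilde F^\lambda_\Box\big(\Bf^{(\lambda)}(\bv)\big)
\end{equation*}
as formal Laurent series in $\bv$, in which $\widetilde F^\lambda_\Box$ has a nonzero $\bv^0$-coefficient. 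The three descriptions then appear as three views of the same tuple: \eqref{cond: innit} is its intrinsic characterisation as a Bethe root, \eqref{cond: recursion} is the Picard iteration of the resulting fixed-point system, and \eqref{cond: closedform} is the Lagrange--Good resummation of that iteration.

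The identity above is the combinatorial heart, and the main obstacle. Under the substitution $\Bf=\Bf^{(\lambda)}(\bv)$, exactly the factors of \eqref{eqn: Bethe_formula} indexed by a lattice neighbour $\Box'=\Box+(-1)^c(a,b)\in\lambda$ degenerate, because the constant term $(-1)^c(at_1+bt_2)$ cancels against the constant term of $\B^{(\lambda)}_{\Box'}-\B^{(\lambda)}_\Box$; what remains is precisely $(S_{\Box'}-S_\Box)^{(-1)^{a+b+c}}$, and for $\Box=(0,0)$ the explicit numerator $\B_{(0,0)}=S_{(0,0)}$ degenerates as well. I would then prove, by a telescoping argument on connected skew shapes, that the product of these degenerate factors collapses to $v_\Box$ up to an explicit sign; the non-degenerate factors, together with the prefactor $(-1)^{d-1}$ and the numerator $\B_\Box^{1-\delta_{\Box,(0,0)}}$, reassemble by inspection into $\widetilde F^\lambda_\Box$. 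The essential input is a cancellation $S_{\Box'}-S_\Box=\sum_{\lambda/\mu\text{ conn.},\, \Box'\in\lambda/\mu,\, \Box\notin\lambda/\mu}\prod v-\sum_{\lambda/\mu\text{ conn.},\, \Box\in\lambda/\mu,\, \Box'\notin\lambda/\mu}\prod v$ for neighbouring boxes, which matches the boxes added and removed across the six directions and makes the full product of differences telescope.

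Granting the identity, $p=F_\Box$ is equivalent to $v_\Box=p/\widetilde F^\lambda_\Box(\Bf^{(\lambda)}(\bv))$. Since $\widetilde F^\lambda_\Box$ is a unit, the right-hand side lies in $p\cdot\QQ(t_1,t_2)[[p]]$ and strictly raises the $p$-order of any perturbation, so the iterates $\bv^n$ converge $p$-adically (the footnote) to the unique fixed point $\bv^\ast$, and $\Bf^\lambda\coloneqq\Bf^{(\lambda)}(\bv^\ast)$ solves \eqref{eqn: Bethe} with $\B^\lambda_{(i,j)}=-it_1-jt_2+\CO(p^{>0})$; admissibility is immediate from the distinct constant terms $-it_1-jt_2$, which generically avoid $0,t_1+t_2$ and have pairwise differences avoiding $t_1,t_2,t_1+t_2$, the corrections being $\CO(p)$. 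This gives existence and the direction \eqref{cond: recursion}$\Rightarrow$\eqref{cond: innit}. For the converse I would transport an arbitrary admissible solution with leading behaviour \eqref{eqn: initialshit} into the $\bv$-coordinates: at $\bv^\ast$ the change of variables $\bv\mapsto\Bf^{(\lambda)}(\bv)$ has Jacobian a nonzero element of $K$, hence is invertible there over $K$, and the identity shows the solution corresponds to a fixed point of the same contraction; uniqueness of the fixed point forces it to equal $\Bf^\lambda$. Equivalently, one checks order by order that the $p^m$-coefficient of each $v_\Box$ is determined by the lower-order data, so no two admissible solutions with leading term \eqref{eqn: initialshit} can differ.

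For the closed formula \eqref{cond: closedform} I would apply multivariate Lagrange--Good inversion to the system $v_\Box=p\,\psi_\Box(\bv)$ with $\psi_\Box=1/\widetilde F^\lambda_\Box$, which is admissible precisely because each $\psi_\Box$ is a unit. The inversion theorem expresses the evaluation at $\bv^\ast$ as a constant-term extraction $[\bv^0]$ assembled from the Jacobian determinant of the system and a geometric resummation denominator; substituting the identity $F_\Box=v_\Box\widetilde F^\lambda_\Box$ turns these into the determinant $\detty{\frac{\partial F_\Box/\partial v_{\Box'}}{F_\Box}}$ and the product $\prod_{\Box\in\lambda}(1-p\,F_\Box)^{-1}$ appearing in \eqref{cond: closedform}, while the claim that each $p$-coefficient is a Laurent series in $\bv$ is once more the unit property of $\widetilde F^\lambda_\Box$. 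Of the four steps only the telescoping identity of the second paragraph is genuinely delicate; granting it, the fixed-point reformulation, the convergence, the uniqueness, and the Lagrange--Good resummation all follow by structured but routine manipulations.
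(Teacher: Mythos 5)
Your route is essentially the paper's own: the factorisation $F_\Box\big(\Bf^{(\lambda)}(\bv)\big)=v_\Box\cdot\widetilde F^\lambda_\Box\big(\Bf^{(\lambda)}(\bv)\big)$ is the content of Lemma \ref{lemma: box iso bethe}, the $p$-adic contraction is the Banach fixed-point argument in the proof of Theorem \ref{thm: multivariablebetheunique}, and the closed form comes from the same appeal to Lagrange--Good inversion. Two points, however, are genuine gaps rather than routine omissions. First, your justification of admissibility is wrong as stated: the constant terms do \emph{not} avoid the forbidden values. $\B^\lambda_{(0,0)}$ has constant term $0$, and for any two boxes with $\Box'-\Box\in\{(1,0),(0,1),(1,1)\}$ the difference of constant terms is exactly $t_1$, $t_2$ or $t_1+t_2$. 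Admissibility therefore hinges on the $\CO(p)$ corrections being nonzero, which needs an argument: for instance $S_{(0,0)}(\bv^\ast)=\prod_{\Box\in\lambda}v^\ast_\Box\neq 0$ because the only connected skew shape containing $(0,0)$ is $\lambda$ itself, and for $\Box\leq\Box'$ the difference $S_{\Box'}-S_{\Box}$ is a sum over up-closed shapes containing $\Box'$ but not $\Box$, whose unique minimal element $\{\Box''\in\lambda\mid\Box''\geq\Box'\}$ contributes the term of strictly smallest $p$-order.

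Second, the uniqueness in \eqref{cond: innit} needs the composition in the \emph{other} order from the one you prove. Your central identity gives $\widetilde{\bv}^\lambda\circ\Bf^{(\lambda)}=\mathrm{id}$ (injectivity of $\Bf^{(\lambda)}$), but to show that an \emph{arbitrary} admissible solution with leading term \eqref{eqn: initialshit} is a fixed point of your contraction you must first place it in the image of $\Bf^{(\lambda)}$, i.e.\ you need $\Bf^{(\lambda)}\big(\widetilde{\bv}^\lambda(\Bf)\big)=\Bf$ on the whole admissible locus. Invertibility of the Jacobian at $\bv^\ast$ only controls an \'etale neighbourhood of $\Bf^\lambda$, the given solution is only known to agree with $\Bf^\lambda$ to leading order, and $\Bf^{(\lambda)}$ is not a unit perturbation of the identity (its $(0,0)$ component is $\prod_{\Box\in\lambda}v_\Box$, of valuation $d$), so no naive implicit-function or ``order by order'' argument applies directly. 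The gap is repairable from what you have: your identity makes $\Bf^{(\lambda)}$ an \'etale monomorphism, hence an open immersion with dense image in the irreducible admissible locus, and the morphism $\Bf^{(\lambda)}\circ\widetilde{\bv}^\lambda$ agrees with the identity on that dense open, hence everywhere. The paper avoids this detour by proving the surjective composition directly, by induction on $\lvert\lambda\rvert$ peeling off the top row; that induction is also precisely the telescoping identity you defer, which is the genuine core of the argument and cannot be left as a black box in a complete proof.
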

	\begin{remark}
		\begin{enumerate}
			\item The uniqueness in \eqref{cond: innit} is not immediate and part of the statement. Also note that the Bethe equations are symmetric in the $\B_i$ which allows us to use the boxes of $\lambda$ as indices.
			\item The Bethe roots are usually characterized using \eqref{cond: innit} though our proof of that is new. However, explicit descriptions like \eqref{cond: recursion} and \eqref{cond: closedform} have to our knowledge not appeared in the literature before.
		\end{enumerate}
	\end{remark}From now on we will call the $\Bf^\lambda$ simply \textit{Bethe roots}. One might hope that they are the only solutions of \eqref{eqn: Bethe}. However, it is easy to see that there are more - for example $\Bf = (\B_i)_{i=1}^d$ with $\B_i = (t_1+t_2)\frac{(-1)^{d-1}p}{(-1)^{d-1}p+1}$ for all $i$ is one such. In order to further narrow things down, we call a solution $\Bf = (\B_i)_{i=1}^d$ \textit{fully admissible} if in addition to being admissible we have $\B_i\neq\B_{i'}$ for all $i\neq i'$. Indeed, this gets rid of all unwanted solutions:
	\begin{theorem}[\cite{litvinov2}]\label{conj: allbethes}
		Up to permutations of tuple-entries, the $\Bf^\lambda$ described in Theorem \ref{thm: Bethechar} are the only fully admissible solutions of the Bethe equations \eqref{eqn: Bethe} over $K$.
	\end{theorem}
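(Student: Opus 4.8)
The plan is to reduce the classification to a leading-order analysis and then invoke the uniqueness already built into Theorem~\ref{thm: Bethechar}\eqref{cond: innit}. Equip $K$ with its $p$-adic valuation $v$, normalised by $v(p)=1$, whose residue field is $\overline{\BQ(t_1,t_2)}$, and for a root $\B_i$ of valuation $0$ write $q_i\in\overline{\BQ(t_1,t_2)}$ for its leading coefficient (setting $q_i=0$ when $v(\B_i)>0$). Since Theorem~\ref{thm: Bethechar}\eqref{cond: innit} already produces, for every partition $\lambda\vdash d$, an admissible solution $\Bf^\lambda$ whose leading coefficients are the distinct lattice points $\Set{-it_1-jt_2|(i,j)\in\lambda}$, it suffices to prove: the leading coefficients of any fully admissible solution $\Bf\in K^d$ are, up to relabelling, precisely $\Set{-it_1-jt_2|(i,j)\in\lambda}$ for some partition $\lambda$ of $d$. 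Granting this, relabelling the entries of $\Bf$ by the boxes of $\lambda$ puts it into the form \eqref{eqn: initialshit}, and the uniqueness clause of \eqref{cond: innit} forces $\Bf=\Bf^\lambda$; in particular the a priori Puiseux solution automatically has no fractional powers of $p$, so no separate integrality statement is needed.

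First I would record two structural inputs. Multiplying all Bethe equations together and using that, for each unordered pair $\{i,i'\}$, the six factors attached to $(i,i')$ and the six attached to $(i',i)$ cancel in pairs of the shape $\bigl((at_1+bt_2)^2-(\B_{i'}-\B_i)^2\bigr)^{\pm1}$, one obtains the clean identity
\begin{equation*}
	\prod_{i=1}^d F_i(\Bf)=\prod_{i=1}^d\frac{\B_i}{t_1+t_2-\B_i}=p^d,
\end{equation*}
hence $\sum_i v(\B_i)=d+\sum_i v(t_1+t_2-\B_i)$; this bookkeeps the total valuation the roots may carry and, once leading coefficient $t_1+t_2$ is excluded, pins the total positive valuation to exactly $d$. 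Next I would extract the leading-order form of each equation $p=F_i(\Bf)$. For a root of valuation $0$ the prefactor $\B_i/(t_1+t_2-\B_i)$ contributes valuation $0$, while a pairwise factor $(-1)^c(at_1+bt_2)+\B_{i'}-\B_i$ acquires positive valuation exactly when $q_{i'}-q_i=-(-1)^c(at_1+bt_2)$; demanding $v(F_i)=1>0$ therefore forces such a root to have a partner at one of the offsets $t_1,t_2,-t_1-t_2$ and forbids the offset-patterns that would make $v(F_i)\le 0$. In lattice coordinates (identifying a leading coefficient $-it_1-jt_2$ with the box $(i,j)$) this says every non-corner box is linked to a box at $(i-1,j)$, $(i,j-1)$ or $(i+1,j+1)$.

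The combinatorial heart, and the step I expect to be the main obstacle, is to upgrade these local constraints into the global statement that the configuration of leading coefficients is a genuine Young diagram. Three things must be established: (a) no two roots share a leading coefficient, i.e.\ there are no nontrivial clusters; (b) the leading coefficients lie on the lattice $\ZZ t_1+\ZZ t_2$ and form a connected configuration with a unique minimum equal to $0$; and (c) the resulting set of boxes is lower-left justified, which in particular requires excluding the spurious diagonal links coming from the offset $-t_1-t_2$ as well as any disconnected pieces. The delicate point is (a): two roots with a common leading coefficient form a cluster inside which the differences $\B_{i'}-\B_i$ have positive valuation, so the leading-order equations degenerate and must be reanalysed after rescaling, producing a nested Bethe-type system. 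Here the full admissibility hypotheses $\B_i\neq\B_{i'}$ and $\B_i-\B_{i'}\neq t_1,t_2,t_1+t_2$ are exactly what prevent a cluster from closing up consistently (this is what rules out the coincident family $\B_i=(t_1+t_2)\tfrac{(-1)^{d-1}p}{(-1)^{d-1}p+1}$ and its relatives), and turning this into a clean induction on $d$, using the product identity above to track valuations, is the technical core. Once (a)--(c) are in hand, an extremal argument—tracking a box maximising $i+j$ and using its forced lower-left neighbours—shows the configuration is a partition shape, after which the uniqueness in Theorem~\ref{thm: Bethechar}\eqref{cond: innit} yields $\Bf=\Bf^\lambda$ and completes the count of fully admissible solutions as $\Bf^\lambda$, $\lambda\vdash d$.
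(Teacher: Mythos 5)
This statement is not proved in the paper at all: it is imported from the literature (the theorem is attributed to \cite{litvinov2}), and the paper only \emph{uses} it, in Section 4, to deduce rationality via a Galois-descent argument. So there is no in-paper proof to compare against, and your proposal has to be judged on its own merits.

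On those merits, what you have is a strategy outline rather than a proof, and the gap sits exactly where you say it does. The reduction is sound: the identity $\prod_i F_i(\Bf)=\prod_i \B_i/(t_1+t_2-\B_i)=p^d$ does hold (the twelve cross-factors attached to each unordered pair really do cancel), and \emph{if} you could show that the leading coefficients of an arbitrary fully admissible solution form the set $\Set{-it_1-jt_2\mid (i,j)\in\lambda}$ for some $\lambda\vdash d$, then the uniqueness clause of Theorem \ref{thm: Bethechar}\eqref{cond: innit} (which the paper does prove, via Theorem \ref{thm: multivariablebetheunique}) would finish the argument. But steps (a)--(c) are not established, and they carry essentially all of the content of the theorem. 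In particular, (a) is not a routine verification: two distinct roots can agree to leading order while remaining fully admissible a priori, and ruling this out requires rescaling the cluster and analysing the resulting degenerate (nested) system; the coincident family $\B_i=(t_1+t_2)\tfrac{(-1)^{d-1}p}{(-1)^{d-1}p+1}$ shows that the conclusion genuinely fails without the hypothesis $\B_i\neq\B_{i'}$, so that hypothesis must enter the cluster analysis in a nontrivial way that you have not exhibited. Likewise, (b) is asserted but not argued: nothing in what you wrote forces the leading coefficients onto the lattice $\ZZ t_1+\ZZ t_2$, nor forces a root of negative or fractional valuation to be absent (your valuation bookkeeping constrains only the \emph{sum} $\sum_i v(\B_i)-\sum_i v(t_1+t_2-\B_i)=d$, which by itself does not pin down individual valuations). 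Until the combinatorial heart is carried out, the proposal does not constitute a proof of the classification.
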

	This gives us an entirely algebraic characterization of the Bethe roots whereas the descriptions in Theorem \ref{thm: Bethechar} were all somewhat analytic.
	\subsection{Relative stable pairs and Bethe roots} Although our main methods only apply to absolute local curves, we can still gain information about a relative one:\\
	Consider the threefold $X = \BC^2\times \BP^1$ and the smooth divisor $D = \BC^2\times\Set{0,\infty}$ together with the diagonal $T=\left(\BC^*\right)^2$-action on the $\BC^2$-factor. Recall that one can define moduli spaces $P_n(X/D,d)$ of \textit{relative stable pairs} \cite{LiTheDegen} together with evaluation maps
	\[
		\Hilb^{d}(\BC^2)\xleftarrow{ev_0} P_n(X/D,\beta) \xrightarrow{ev_\infty} \Hilb^{d}(\BC^2)
	\]
	to the Hilbert scheme of points on $\BC^2$ and an equivariant virtual class \[ [P_n(X/D,d)]^{vir}\in H^T_*(P_n(X/D,d))_{loc}. \] Using this one can define invariants via\footnote{One has $P_n(X/D,d)=\emptyset$ for $n<d$.}
	\begin{align*}
		&\langle \ch_{z_1}(\gamma_1)\ldots \ch_{z_n}(\gamma_n)\mid \epsilon,\delta\rangle^{X/D,T} \\
		&\coloneqq \sum_{n\geq d} p^{n-d}\int_{[P_n(X/D,d)]^{vir}} \ch_{z_1}(\gamma_1)\ldots \ch_{z_n}(\gamma_n) ev_0^*(\epsilon) ev_\infty^*(\delta) \in \BQ(t_1,t_2)[[p,\bz]]
	\end{align*}
	where $\epsilon,\delta\in H_T^*(\Hilb^d(\BC^2))$. As observed in \cite{QcohHilb,localDTofcurves} these invariants also encode quantum multiplication on $\Hilb^n(\BC^2)$. Now consider the $\BQ(t_1,t_2)((p))$-vector space \[\mathsf{H}\coloneqq H_T^*(\Hilb^d(\BC^2))\otimes_{\BQ[t_1,t_2]} \BQ(t_1,t_2)((p)).\]
	We encode the stationary invariants of $X/D$ in terms of endomorphism-valued power series
	\[
		M(\bz)\in \mathsf{End}\left(\mathsf{H}\right)[[\bz]]
	\]
	for $\bz=(z_1,\dots,z_n)$.
	Indeed, these are defined by taking the functionals of the shape 
	\begin{align*}
		\mathsf{H}\otimes \mathsf{H}\longrightarrow &\BQ(t_1,t_2)((p))[[\bz]]\\
		\gamma\otimes \delta \longmapsto &\langle \ch_{z_1}(\pt)\ldots \ch_{z_n}(\pt)\mid \gamma,\delta\rangle^{X/D,T}
	\end{align*}
	and using the identification \[\mathsf{End}(\mathsf{H}) = \mathsf{H}^\vee\otimes \mathsf{H} = \mathsf{H}^\vee\otimes \mathsf{H}^\vee = \left( \mathsf{H}\otimes \mathsf{H}\right)^\vee\] which comes from equivariant Poincaré duality for $\Hilb^d(\BC^2)$. 
	Recall furthermore that the set of fixed points $\Hilb^d(\BC^2)^T$ is in natural bijection with the set of partitions $\lambda\vdash d$ i.e. of size $d$ and the associated classes $[\lambda]\in \mathsf{H}$ of the fixed points form a basis of $\mathsf{H}$.
	Further denoting 
	\begin{align}\label{eqn: Es}
		\begin{split}
		E(z,\B) &\coloneqq \frac{(1-e^{-t_1 z})(1-e^{-t_2 z})}{t_1 t_2}e^{z \B}\\
		E(z,\Bf) &\coloneqq  \sum_{i=1}^d E(z,\B_i)
		\end{split}
	\end{align}
	we have:
	\begin{theorem}\label{thm: eigenvalues}
		There is a basis $(v_\lambda)_{\lambda}$ of $\mathsf{H}$ so that
		\[
			v_\lambda = [\lambda] + \CO(p)
		\] 
		and 
		\[
			M(\bz) v_\lambda = \prod_{i=1}^nE(z_i,\Bf^\lambda) v_\lambda
		\]
		for all $n$ and $\lambda\vdash d$, where $\Bf^\lambda = (\B_\Box^\lambda(p))_{\Box\in\lambda}$ is the Bethe root associated to $\lambda$.
	\end{theorem}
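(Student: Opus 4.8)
The plan is to deduce Theorem \ref{thm: eigenvalues} from the genus-zero gluing (TQFT) structure of relative stable pairs together with the closed formula for absolute local curves in Theorem \ref{thm: main}. The first reduction is to a single descendent insertion. Writing the tube $\BC^2\times\BP^1$ relative to $\BC^2\times\{0,\infty\}$ and degenerating the base $\BP^1$ into a chain of $n$ copies of $\BP^1$, I would distribute the insertions $\ch_{z_i}(\pt)$ — each pulled back from a fiber and hence transverse to the degeneration of the base — one per component. The degeneration formula for relative stable pair invariants \cite{LiTheDegen}, expressed through the equivariant Poincar\'e pairing on $\Hilb^d(\BC^2)$ that is already built into the identification $\End(\mathsf H)=(\mathsf H\otimes\mathsf H)^\vee$, then yields the factorization $M(\bz)=M(z_1)\cdots M(z_n)$, where $M(z)\in\End(\mathsf H)[[z]]$ is the single-insertion tube operator. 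Since the classes $\ch_k(\BF)$ lie in even cohomology, $M(\bz)$ is symmetric in the $z_i$, so the operators $\{M(z)\}_z$ commute. Consequently it suffices to produce a basis $(v_\lambda)$ with $v_\lambda=[\lambda]+\CO(p)$ that simultaneously diagonalises all $M(z)$ with eigenvalue $E(z,\Bf^\lambda)$; the stated product eigenvalue for $M(\bz)$ is then immediate.

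Second, I would construct the eigenbasis by a leading-order analysis in $p$. At $p=0$ the relative moduli space sits in minimal Euler characteristic and a localisation computation shows that $M(z)|_{p=0}$ is diagonal in the fixed-point basis $([\lambda])_{\lambda\vdash d}$, with eigenvalue the box sum $\sum_{(i,j)\in\lambda}E(z,-it_1-jt_2)$, matching $E(z,\Bf^\lambda)|_{p=0}$ via the initial condition \eqref{eqn: initialshit}. For generic $t_1,t_2$ the weight multisets $\{-it_1-jt_2:(i,j)\in\lambda\}$ distinguish distinct partitions, so these leading eigenvalues are pairwise distinct as elements of $\QQ(t_1,t_2)[[z]]$. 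Standard deformation of a commuting family with distinct leading spectrum over the complete local ring $\QQ(t_1,t_2)[[p]]$ then produces a unique simultaneous eigenbasis $v_\lambda=[\lambda]+\CO(p)$, together with eigenvalues $f_\lambda(z)\in\QQ(t_1,t_2)((p))[[z]]$ satisfying $f_\lambda(z)=E(z,\Bf^\lambda)|_{p=0}+\CO(p)$.

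Third — and this is where the input of Theorem \ref{thm: main} is essential — I would identify $f_\lambda(z)=E(z,\Bf^\lambda)$ to all orders in $p$. Capping both ends of the tube with the relative cap $\BC^2\times\BP^1/(\BC^2\times\{\infty\})$ realises the absolute geometry $\BC^2\times\BP^1=\Tot_{\BP^1}(\CO\oplus\CO)$, so that $\langle\mathbf{cap},M(\bz)\,\mathbf{cap}\rangle$ computes its stationary descendent invariants; more generally, gluing caps and twisted tubes realises every local curve. Spectrally decomposing the left-hand side gives $\sum_\lambda c_\lambda\prod_i f_\lambda(z_i)$, while Theorem \ref{thm: main} evaluates the same invariants as an explicit sum over $\lambda\vdash d$ of coefficients times $\prod_i E(z_i,\Bf^\lambda)$. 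Matching the two expansions — using that the labels already agree at $p=0$ and that the $f_\lambda$ are distinct, so that varying the geometry and the insertions separates the summands — forces $f_\lambda(z)=E(z,\Bf^\lambda)$ inductively in $p$.

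The main obstacle is precisely this last identification. It requires that the absolute formula of Theorem \ref{thm: main} be available in a form compatible with the genus-zero gluing of the relative theory, and that the cap overlaps $c_\lambda$ be nonzero so that the spectral summands can be compared term by term. Equivalently, one may avoid the matching by showing directly that the spectral multiset extracted from $f_\lambda$ — which is recovered since $E(z,\B)$ determines $\B$ through its $z$-expansion — satisfies the Bethe equations \eqref{eqn: Bethe} together with the initial condition \eqref{eqn: initialshit}, and then invoking the uniqueness in Theorem \ref{thm: Bethechar}\eqref{cond: innit}; the difficulty is then transferred to verifying the Bethe equations for the $\Hilb^d(\BC^2)$ quantum-multiplication operators, which is the content linking \cite{QcohHilb,localDTofcurves} to the integrable ILW system.
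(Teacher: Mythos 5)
Your first two steps track the paper's proof closely: the degeneration formula gives $M(\bz)=M(z_1)\cdots M(z_n)$ and a commuting family, and the $p=0$ specialization identifies the operators with multiplication by $k!\cdot\ch_k(\pi_*\CO_Z)$ on $\Hilb^d(\BC^2)$, diagonal in the fixed-point basis with distinct eigenvalues for generic $t_1,t_2$, whence an eigenbasis $v_\lambda=[\lambda]+\CO(p)$. The divergence is in the third step, and that is where your argument has a genuine gap. You propose to identify the eigenvalues to all orders in $p$ by capping the tube and comparing with Theorem \ref{thm: main} for $\BC^2\times\BP^1$, i.e.\ matching $\sum_\lambda c_\lambda\prod_i f_\lambda(z_i)$ against the explicit $g=0$ formula. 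As you yourself note, this requires the cap overlaps $c_\lambda$ to be nonzero, and you do not prove this. Worse, the $g=0$ instance of \eqref{eqn: Plambdamaindef} carries the factor $A(\Bf^\lambda)^{-1}$, whose nonvanishing the paper explicitly cannot control (this is exactly why Section \ref{sect: Pf of pole shit} has to fall back on \cite{PaPixRat} for the genus-zero pole statement), so the summands on the Theorem \ref{thm: main} side are not manifestly separable either. Your fallback — verifying the Bethe equations directly for the quantum-multiplication operators — is not carried out and is precisely the external input the paper's proof is designed to avoid.

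The paper closes the loop differently and more cheaply: it degenerates $\BC^2\times E$ (an elliptic curve, trivial line bundles) into a circle of tubes, so that $\Tr\big[M(z_1,\ldots,z_n)\big]=\langle\ch_{z_1}(\pt)\cdots\ch_{z_n}(\pt)\rangle_d^{\BC^2\times E}$, and for $g=1$, $l_1=l_2=0$ the formula of Theorem \ref{thm: main} collapses to $\sum_{\lambda\vdash d}\prod_i\sum_{\Box\in\lambda}E(z_i,\B^\lambda_\Box)$ with no $A$ or $B_i$ factors at all. The trace automatically weights every eigenvalue with coefficient one, so no nonvanishing of pairings is needed: taking $z$-coefficients gives $\Tr[M_k^n]=\sum_\lambda\big(\sum_\Box(\B^\lambda_\Box)^k\big)^n$ for all $n$, which pins down the exact spectrum of a suitable $M_{k_0}$, and then $\Tr[M(z)M_{k_0}^n]$ together with Vandermonde invertibility forces $a_\lambda(z)=\sum_\Box E(z,\B^\lambda_\Box)$. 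If you replace your cap-pairing step with this trace computation, your argument becomes the paper's proof; as written, the identification of the eigenvalues beyond leading order is not established.
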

	Various versions of this had already been shown in \cite{okag,Feigin,litvinov2} (see also \cite{Smirnov}). We will give a new proof by deducing it from Theorem \ref{thm: main}. See \cite{ShamelessPlug} for a generalization of this approach to the enumerative geometry of Nakajima quiver varieties.
	\subsection{The main Theorem}
	From now on we will fix a curve $C$ of genus $g$ and \[\alpha_1,\ldots,\alpha_g,\beta_1,\ldots,\beta_g\in H^1(C,\ZZ)\] a symplectic basis i.e. so that \[\alpha_i\cdot\alpha_j= \beta_i\cdot\beta_j = 0\text{ and }\alpha_i\cdot\beta_j = \delta_{i,j}\cdot \pt\] for any $1\leq i,j\leq g$ and $\pt\in H^2(C,\ZZ)$ the point class. As a result we get a basis $\mathcal{B}= \Set{ 1,\alpha_1,\ldots,\alpha_g,\beta_1,\ldots,\beta_g,\pt}$ of $H^*(C)$.
	Let further $L_1,L_2$ be line bundles of degrees $l_1,l_2$ respectively and $X=\Tot_C(L_1\oplus L_2)$ the associated local curve.\\
	For fixed degree $d\geq 0$, free variables $\bY = (Y_i)_{i=1}^d$ and classes $\gamma_i \in \mathcal{B}$ we define the formal bracket
	\[
		\langle \ch_{z_1}(\gamma_1) \ldots \ch_{z_n}(\gamma_n) \rangle_{d}^{X,\text{form}}\in \BQ(t_1,t_2,\Bf)[[\bz]]
	\]
	to be the unique super-commutative expression that vanishes in case 
	\[
		\left|\Set{i| \gamma_i = \alpha_l}\right|\neq \left|\Set{i| \gamma_i = \beta_l}\right|
	\]
	for some $l=1,\ldots,g$ and is otherwise given by
	\begin{align}\label{eqn: Plambdamaindef}
		\begin{split}
			&\left\langle \prod_{i=1}^a\ch_{x_i}(1) \cdot\prod_{i=1}^b\ch_{y_i}(\pt)\cdot \prod_{l=1}^g \Big(\ch_{z_1^l}(\alpha_l)\ch_{w_1^l}(\beta_l)\ldots\ch_{z_{c_l}^l}(\alpha_l)\ch_{w_{c_l}^l}(\beta_l)\Big) \right\rangle_{d}^{X,\text{form}} \\
			&= \prod_{i=1}^a x_i\cdot\sum_{\substack{\coprod_{i=-1}^g S_i = \Set{1,\ldots,a}\\ \Box_i\in\lambda\text{ for }i\in S_{-1}}}\prod_{i\in S_-1}\nabla_i^{\Bf} \prod_{i\in S_{-1}} E(x_i,\B_{\Box_i}) \\
			&\cdot \prod_{i\in S_0} \Big(l_1\mathfrak{B}(x_i t_1)+l_2\mathfrak{B}(x_i t_2)\Big)E(x_i,\Bf)
			\cdot\prod_{i=1}^b E(y_i,\Bf)\\
			&\cdot \prod_{i=1}^g  \left( \bz^i,\bw^i;\bx_{S_i}\mid \Bf\right)_{M(\Bf)^{-1}} 
			\cdot A(\Bf)^{g-1}\cdot B_1(\Bf)^{l_1} \cdot B_2(\Bf)^{l_2}.
		\end{split}
	\end{align}
	Here, we denoted
	\[
		M(\Bf) \coloneqq \left(\frac{\partial F_j(\Bf)/\partial \B_i}{F_j(\Bf)}\right)_{i,j}
	\]
	where the $F_i(\Bf)$ are as in \eqref{eqn: Bethe_formula} and 
	\[
		\nabla_i^\Bf \coloneqq \sum_{i'}\left(M(\Bf)^{-1}\right)_{i,i'}\frac{\partial}{\partial \B_{i'}}
	\]
	as well as
	\begin{align}\label{eqn: ABB}
		\begin{split}
			A(\Bf) &\coloneqq \prod_{i=1}^d \left(\B_i \cdot (t_1+t_2-\B_i)\right) \cdot \prod_{\substack{1\leq i,j\leq d\\0\leq a,b\leq 1\\ (a,b,i-j)\neq (0,0,0)}} (at_1+bt_2 + \B_i-\B_j)^{(-1)^{a+b+1}}\cdot\detty{M(\Bf)}\\
			B_1(\Bf) &\coloneqq \prod_{i=1}^d (t_1+t_2-\B_i)^{-1}\cdot \prod_{i,j=1}^d\prod_{b=0}^1\left(t_1+bt_2+\B_i-\B_j\right)^{(-1)^{b+1}}\\
			B_2(\Bf) &\coloneqq \prod_{i=1}^d (t_1+t_2-\B_i)^{-1}\cdot \prod_{i,j=1}^d\prod_{a=0}^1\left(at_1+t_2+\B_i-\B_j\right)^{(-1)^{a+1}}.
		\end{split}
	\end{align} 
	We also wrote 
	\[
	\mathfrak{B}(t) \coloneqq \frac{1}{e^t-1}-\frac{1}{t} = -\frac{1}{2} + \sum_{i\geq 1} B_{2i}\frac{ t^{2i-1}}{(2i)!}
	\]
	with $B_n$ the $n$-th Bernoulli number and
	\[
	\left( \bz,\bw;\bx\mid \Bf\right)_N \coloneqq (-1)^n \sum_{\substack{\ba = (a_i)_{i=1}^m,\\\bb = (b_i)_{i=1}^m,\\\bc = (c_i)_{i=1}^n}} \detty{N_{\ba\sqcup\bc;\bb\sqcup\bc}} \cdot\prod_{i=1}^m z_i w_i E(z_i,\Bf_{a_i}) E(w_i,\Bf_{b_i}) \cdot\prod_{i=1}^n x_i E(x_i,\Bf_{c_i})
	\]
	for any $d\times d$-matrix $N$, vectors $\bz,\bw$ of length $m$ and $\bx$ of length $n$. The $E(z,\Bf)$ are as in \eqref{eqn: Es}. Note in particular that $\langle \dots\rangle^{X,\text{form}}_d$ is symmetric in the $\B_i$.\\
	Our main Theorem now specifies the relationship of the formal bracket with the actual stable pair invariant:
	\begin{theorem}\label{thm: main}
		One can evaluate
		\[
			\langle \ch_{z_1}(\gamma_1) \ldots \ch_{z_n}(\gamma_n) \rangle_{d}^{X,\text{form}}
		\]
		at $\Bf=\Bf^\lambda$ for any Bethe root $\Bf^\lambda$ and we have
		\begin{align}\label{eqn: main fucker}
			\langle \ch_{z_1}(\gamma_1) \ldots \ch_{z_n}(\gamma_n) \rangle_{d}^X = p^{d(1-g)}\sum_{\lambda\vdash d} \langle \ch_{z_1}(\gamma_1) \ldots \ch_{z_n}(\gamma_n) \rangle_{d}^{X,\text{form}}\mid_{\Bf=\Bf^\lambda}.
		\end{align}
		\end{theorem}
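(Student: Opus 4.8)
The plan is to compute the left-hand side of \eqref{eqn: main fucker} by $T$-equivariant virtual localization on $P_m(X,d)$, weighted by $p^m$, and to match the outcome term by term with the formal bracket \eqref{eqn: Plambdamaindef}. Since $X=\Tot_C(L_1\oplus L_2)$ has $X^T=C$ (the zero section) and $T$ scales the two fibre directions independently, every $T$-fixed stable pair is supported on a fibrewise monomial thickening of $C$ whose generic transverse shape is a single partition $\lambda\vdash d$, the \emph{profile}; the remaining moduli are concentrated at the finitely many points of $C$ where the pair departs from the pure $\lambda$-thickening. Using the description of $P_m(X,d)^T$ from Section \ref{sect: Fixed}, I would first split the fixed locus according to the profile, writing
\[
\langle \ch_{z_1}(\gamma_1)\cdots\ch_{z_n}(\gamma_n)\rangle_d^X=\sum_{\lambda\vdash d} Z_\lambda(\bz),
\]
where $Z_\lambda(\bz)$ gathers the $p$-weighted contributions of all fixed components of profile $\lambda$. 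The goal is then the single-profile identity $Z_\lambda(\bz)=p^{d(1-g)}\,\langle\cdots\rangle_d^{X,\text{form}}\!\mid_{\Bf=\Bf^\lambda}$.

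Next I would exploit the geometry of $C$ to factorize $Z_\lambda(\bz)$. The obstruction theory and virtual normal bundle of a profile-$\lambda$ component are controlled by data pulled back from $C$ together with the point-defect contributions, so the bundles $L_1,L_2$ enter only through their degrees $l_1,l_2$ and the genus enters only through $H^*(C)$. Integrating the factors pulled back from $C$ produces exactly the $\lambda$-independent exponents visible in \eqref{eqn: Plambdamaindef}: the Euler-characteristic factor $A(\Bf)^{g-1}$ together with the overall $p^{d(1-g)}$ coming from the $\chi$-shift in \eqref{locPT}, the twist factors $B_1(\Bf)^{l_1}B_2(\Bf)^{l_2}$ from $\deg L_1,\deg L_2$, and one handle bracket $(\,\cdot\,)_{M(\Bf)^{-1}}$ per unit of genus absorbing the paired odd insertions $\alpha_l,\beta_l$. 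A point insertion $\ch_{y_i}(\pt)$ is localized at one point of $C$ and hence contributes the scalar $E(y_i,\Bf)$, whereas an insertion $\ch_{x_i}(1)$ of the fundamental class is a bulk class spread over all of $C$; upon integration it must attach to one of the geometric features, and this choice is precisely the set partition $\coprod_{i=-1}^g S_i$ of \eqref{eqn: Plambdamaindef}, giving a box derivative for $i\in S_{-1}$, a level term proportional to $l_1,l_2$ through $\mathfrak{B}$ for $i\in S_0$, or a handle term for the remaining indices.

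The heart of the argument is the resummation of the point-defect moduli, which is where the Bethe roots enter. At a defect the $p$-weighted sum over the allowed enhancements of the $\lambda$-thickening is a power series in $p$ whose defining fixed-point equations are exactly the Bethe equations \eqref{eqn: Bethe}: the geometric series $\prod_\Box \frac{1}{1-p\,F_\Box}$ records the defect multiplicities, while the equivariant localization weight supplies the Jacobian $\det\!\big((\partial F_\Box/\partial v_{\Box'})/F_\Box\big)$. I would therefore identify the resummed series with the closed formula of Theorem \ref{thm: Bethechar}\eqref{cond: closedform} (equivalently, run the fixed-point iteration of \eqref{cond: recursion}), so that the deformed box weights become the power series $\B_\Box^\lambda(p)=-it_1-jt_2+\CO(p^{>0})$ singled out by \eqref{cond: innit}. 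The same change of variables $p=F_i(\Bf)$ converts the bulk $p$-derivative produced by a spread $\ch_{x_i}(1)$ into the Bethe-root derivative $\nabla_i^\Bf=\sum_{i'}(M(\Bf)^{-1})_{i,i'}\partial_{\B_{i'}}$, the inverse Jacobian $M(\Bf)^{-1}$ recording the derivatives of the Bethe roots $\Bf$ with respect to $p$ along the Bethe locus; this explains both the operators $\nabla_i^\Bf$ and the occurrence of $M(\Bf)^{-1}$ inside the handle brackets.

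The step I expect to be the main obstacle is exactly this resummation and the virtual-class bookkeeping underlying it. Matching the localization generating series to \eqref{cond: closedform} requires an exact identification of the equivariant weight at a defect with the Bethe Jacobian, including the reorganization of all remaining factors into $A(\Bf)$, $B_1(\Bf)$ and $B_2(\Bf)$, together with a proof that the iteration converges in $\BQ(t_1,t_2)[[p]]$ to the distinguished solution of \eqref{cond: innit} rather than to one of the spurious solutions excluded by Theorem \ref{conj: allbethes}. Pinning down the normalization $p^{d(1-g)}$ and tracking the super-commutative signs for the odd insertions are comparatively routine but must be carried throughout. Once $Z_\lambda(\bz)=p^{d(1-g)}\,\langle\cdots\rangle_d^{X,\text{form}}\!\mid_{\Bf=\Bf^\lambda}$ is established, summing over $\lambda\vdash d$ gives \eqref{eqn: main fucker}.
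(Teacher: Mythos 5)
Your outline reproduces the paper's strategy at the level of slogans (localize by transverse profile $\lambda$, factorize over the geometric features of $C$, resum the remaining moduli to produce Bethe roots), but the two steps you defer are precisely the content of the proof, and your picture of them is not quite right. First, the fixed locus of profile $\lambda$ is not a collection of point defects on a monomial thickening: it is the double nested Hilbert scheme $C^{[\bn]}$, and to evaluate the localization formula one needs Proposition \ref{prop: irrcomps} — that every irreducible component of $C^{[\bn]}$ has the expected dimension and that the fundamental class decomposes as a sum of pushforwards from products of symmetric products $C^{(\bmm)}$ indexed by tuples $\bmm\vdash\bn$ over \emph{connected skew partitions} of $\lambda$. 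Without this purity statement the virtual class on the fixed locus is not the fundamental class and the integrals in Proposition \ref{prop: localization} cannot be written down; your ``defects at finitely many points'' describes only a dense open subset and does not account for the combinatorics of which skew partitions index the components.

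Second, the resummation is not an identification of a geometric series with the closed formula of Theorem \ref{thm: Bethechar}\eqref{cond: closedform}; the mechanism is multivariate Lagrange inversion. After computing the integrals over $C^{(\bmm)}$ via the intersection theory of symmetric products (Poincar\'e's formula, the class of the universal divisor, and the self-avoiding-flow determinant identity of Lemma \ref{lemma: alphabetaintegrals}, which is what produces $\detty{M}$ and the minors $\left(M^{-1}\right)_{\ba,\bb}$ inside the handle brackets), the contribution of profile $\lambda$ is a sum over $\bmm$ of coefficient extractions $[\bu^{\bmm}]$ of a fixed Laurent series times $\prod_{\lambda/\mu}\overline{F}^\lambda_{\lambda/\mu}(\Bf)^{-m_{\lambda/\mu}}$; Lagrange inversion converts this into the evaluation of that series at the solution $\bu^\lambda(\bp)$ of the skew-indexed Bethe-type equations of Theorem \ref{thm: multivariablebetheunique}\eqref{cond: recursion_multi_S}, whose uniqueness (Banach fixed point in $k[[\bp]]$) — not Theorem \ref{conj: allbethes} — is what guarantees one lands on the distinguished root with $\B^\lambda_{(i,j)}=-it_1-jt_2+\CO(p^{>0})$. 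One then still needs the comparison Lemmas \ref{lemma: bethedet} and \ref{prop: bethematrices} to convert the Jacobian in the skew-partition variables into $A(\Bf)\detty{M(\Bf)}$ and $M(\Bf)^{-1}$, and the identity $M(\Bf^\lambda)^{-1}=\big(p_\Box\partial\B^\lambda_{\Box'}/\partial p_\Box\big)$ to turn the bulk $p$-derivatives into $\nabla^\Bf_i$. Since all of this is exactly what you flag as ``the main obstacle'' without resolving it, the proposal as it stands has a genuine gap rather than being an alternative route.
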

	\begin{remark}
		\begin{enumerate}
			\item In \cite{MonavariOG} the case of no insertions and $t_1=-t_2$ was considered. In particular, one should be able to obtain \cite[Theorem 1.3]{MonavariOG} as a special case of Theorem \ref{thm: main}, however one can show that 
			\[
				\eval{\B_{(i,j)}^\lambda(p)}_{t_1=-t_2} = t_1(i-j)
			\]
			and hence setting $t_1=-t_2$ makes some  numerators and denominators in \eqref{eqn: ABB} vanish. This makes it difficult to compute the limit $t_1\to-t_2$.
			\item In case of no insertions, Theorem \ref{thm: main} also holds for an appropriate generating series of Gromov-Witten invariants since Gromov-Witten theory and stable pair theory agree by \cite[Theorem 9.3]{HisNameWasBryan}. This seems to suggest a connection between Bethe roots and Hodge integrals.
		\end{enumerate}
	\end{remark}
	The upshot of Theorem \ref{thm: main} is the following slogan:
	\begin{center}
		\textit{The structure of stable pair invariants\\ is induced by the structure of the Bethe roots!}
	\end{center}
	Indeed, this is how we will prove Theorem \ref{thm: structurethm}. Part \eqref{conj: rationality} will turn out to be a consequence of Theorem \ref{conj: allbethes}, part \eqref{conj: powerminusone} comes from the invariance of the Bethe equations under the involution \[t_i\mapsto t_i,p\mapsto p^{-1},\B_i\mapsto t_1+t_2-\B_i\] and part \eqref{conj: poles} is connected to the fact that the $\B_\Box^\lambda(p)$ are convergent power series that can be locally analytically continued to all $p$ in \[\BC\setminus\Set{\zeta|(-\zeta)^n=1\text{ for some } 1\leq n\leq d }.\]
	\subsection{Content}
	Our proof of Theorem \ref{thm: main} follows the approach of \cite{MonavariOG}, where the double nested Hilbert scheme was first used to study stable pair invariants. In Section \ref{sect: DNHS} we describe the irreducible components of the double nested Hilbert scheme and show that they are all of expected dimension. In Section \ref{sect: Fixed} we recall the relation between the double nested Hilbert scheme and the fixed locus of the moduli of stable pairs and spell out the implications of Section \ref{sect: DNHS} for stable pair invariants. The proofs of Theorems \ref{thm: structurethm}\eqref{conj: rationality}, \ref{thm: structurethm}\eqref{conj: powerminusone}, \ref{thm: eigenvalues} and \ref{thm: main} are carried out in Section \ref{sect: main}. In Section \ref{sect: Bethe} we prove a multivariate version of Theorem \ref{thm: Bethechar} which we then use to show \ref{thm: structurethm}\eqref{conj: poles}. Finally, Section \ref{sect: combi} contains various combinatorial Lemmas which are used in the previous two sections.
	\subsection{Related work}
	During the writing of this paper \cite{DNHSnew} appeared, where the authors study the geometry of the double nested Hilbert scheme for its own sake independently of this paper. In particular, they discuss generalizations of the results of Section \ref{sect: DNHS}.
	\subsection{Acknowledgments}
	First, I would like to thank my advisor Georg Oberdieck for his input, patience and multiple proofreadings of this paper. I also thank Rahul Pandharipande for multiple proofreadings and the invitation to a prolonged research visit during which I was allowed to give a talk on this project. I further thank Alexey Litvinov, Alexei Oblomkov and Andrey Smirnov for answering questions and very helpful discussions. The author was supported by the starting grant ’Correspondences in enumerative geometry: Hilbert schemes, K3 surfaces and modular forms’, No 101041491 of the European Research Council.
	\section{Double nested Hilbert schemes and their irreducible components}\label{sect: DNHS}
	In this section we recall the double nested Hilbert scheme of a smooth projective curve and give a description of its irreducible components, which all turn out to have the same dimension (c.f. Proposition \ref{prop: irrcomps}). This fact is what enabled all calculations in this paper. In Section \ref{sect: notation} we recall some combinatorial notation necessary for stating this result.
	\subsection{Notation}\label{sect: notation}
	By a \textit{partition} $\lambda$ of \textit{size} $d\geq 0$ (or $\lambda\vdash d$ for short) we mean a finite sequence of positive integers $\lambda_0\geq \lambda_1\geq \lambda_2\geq \ldots \geq \lambda_{n-1}> 0$ so that 
	\[
	|\lambda| \coloneqq \sum_{i=0}^{n-1}\lambda_i = d.
	\]
	We call the integer $l(\lambda)\coloneqq n$ the \textit{length} of $\lambda$.
	Furthermore, we will always identify $\lambda$ with its \textit{Young diagram} which is the set of all $(i,j)\in\BN_0^2$ so that $0\leq j <\lambda_i$\footnote{Note that the top left box is denoted $(0,0)$ and not $(1,1)$ as in most of the combinatorics literature.}. For any box $(i,j)\in\lambda$ it is often convenient to write $\Box=(i,j)$ when the coordinates $i$ and $j$ are not used.
	\begin{figure}[h]
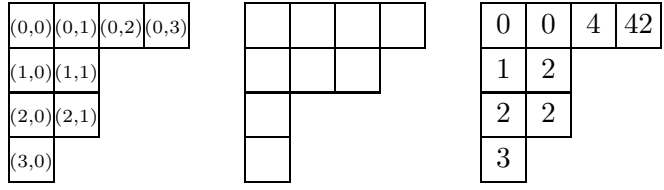

	\begin{ytableau}
		\none &\text{{\tiny (0,0)}} &\text{{\tiny (0,1)}} &\text{{\tiny (0,2)}} &\text{{\tiny (0,3)}}\\
		\none &\text{{\tiny (1,0)}} &\text{{\tiny (1,1)}} \\
		\none &\text{{\tiny (2,0)}} &\text{{\tiny (2,1)}} \\
		\none & \text{{\tiny (3,0)}} 
	\end{ytableau}
	\begin{ytableau}
		\none & & & &\\
		\none & & &\\
		\none &  \\
		\none & 
	\end{ytableau}
	\begin{ytableau}
		\none & 0&0 &4 &42\\
		\none & 1&2 \\
		\none & 2 &2 \\
		\none & 3
	\end{ytableau}
	\caption{On the left: The Young diagram of $\lambda = (4,2,2,1)$ with coordinates. In the middle: The Young diagram of $\overline{\lambda} = (4,3,1,1)$. On the right: A reverse plane partition on $\lambda$}
	\end{figure}\\
	We write $\overline{\lambda}$ for the unique partition whose Young diagram is 
	\[
		\Set{(i,j)| (j,i)\in\lambda}
	\]
	i.e. the Young diagram of $\lambda$ flipped along the diagonal. We further denote 
	\[
		n(\lambda) = \sum_{(i,j)\in\lambda} i.
	\]
	Recall that a \textit{skew partition} $\lambda/\mu$ is a pair of partitions $\lambda$ and $\mu$ so that $\mu_i\leq \lambda_i$ for any $i$. This is equivalent to the Young diagram of $\mu$ being contained in the Young diagram of $\lambda$ and we will often identify $\lambda/\mu$ with the complement of Young diagrams $\lambda\setminus \mu$ - in particular we write $\lvert \lambda/\mu\rvert\coloneqq \lvert \lambda\setminus\mu\rvert$. Note here that a subset $S\subset \lambda$ is a skew partition $S=\lambda/\mu$ if and only if $\Box\in S$ and $\Box\leq \Box'\in\lambda$ imply $\Box'\in S$. 
	\begin{figure}[h]
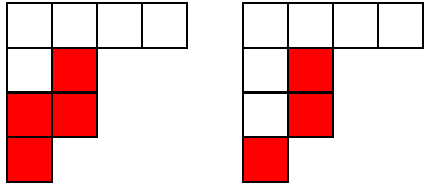

		\begin{ytableau}
			\none & & & &\\
			\none & &*(red) \\
			\none & *(red)&*(red) \\
			\none & *(red)
		\end{ytableau}
		\begin{ytableau}
			\none & & & &\\
			\none & &*(red) \\
			\none & &*(red) \\
			\none & *(red)
		\end{ytableau}
		\caption{Skew partitions $\lambda/\mu_l$ and $\lambda/\mu_r$ respectively with $\lambda=(4,2,2,1)$, $\mu_l=(4,1)$, $\mu_r=(4,1,1)$ and complements in red. The left one is connected and the right one is disconnected.}
	\end{figure}
	Unless stated otherwise, we will from now on require all skew partitions to be \textit{connected} i.e. any two boxes $\Box,\Box'\in\lambda/\mu$ must be connected via a sequence of boxes in $\lambda/\mu$ in which any two consecutive boxes share an edge.\\
	One can equip $\BN_0^2$ with the partial order given by 
	\[
	(i,j)\leq (i',j') \text{ iff } i\leq i' \text{ and } j\leq j'.
	\]
	A tuple of natural numbers $\mathbf{n} = (n_\Box)_{\Box\in \lambda}$ on a partition $\lambda$ is called a \textit{reverse plane partition} if for any $\Box,\Box'\in \lambda$
	\[
	\Box\leq \Box' \text{ implies } n_\Box \leq n_\Box'.
	\]
	In this case we denote
	\[
		\lvert \bn\rvert \coloneqq \sum_{\Box\in\lambda} n_\Box.
	\]
	Observe furthermore that for any tuple of numbers $\bmm = \left(m_{\lambda/\mu}\right)_{\lambda/\mu}$ indexed over the connected skew partitions in $\lambda$ one gets an associated reverse plane partition $\bn = (n_\Box)_{\Box\in\lambda}$ defined by
	\[
		n_{\Box} = \sum_{\Box\in\lambda/\mu} m_{\lambda/\mu}.
	\]
	We will abbreviate this relation as $\lvert\bmm\rvert = \bn$ or $\bmm\vdash \bn$. Further, we will write
	\[
		\lVert\bmm\rVert\coloneqq \lvert\bn\rvert = \sum_{\lambda/\mu} \lvert \lambda/\mu\rvert \cdot m_{\lambda/\mu}.
	\]
	\begin{lemma}\label{lemma: stoopidfuckinlemma}
		Any reverse plane partition $\bn$ is $\lvert\bmm\rvert$ for some $\bmm$ as above. Furthermore, if $n_\Box\leq 1$ for all $\Box\in\lambda$, then $\bmm$ is unique.
	\end{lemma}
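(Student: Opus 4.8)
The plan is to read off $\bmm$ from the \emph{superlevel sets} of $\bn$. For each integer $k\geq 1$ put $L_k\coloneqq\Set{\Box\in\lambda | n_\Box\geq k}$; these are nested, $L_1\supseteq L_2\supseteq\cdots$, and empty for $k$ large. Because $\bn$ is a reverse plane partition, each $L_k$ is upward closed (if $\Box\in L_k$ and $\Box\leq\Box'\in\lambda$ then $n_{\Box'}\geq n_\Box\geq k$), hence a skew partition in the sense recalled above, though in general a disconnected one. The first thing I would establish is the structural fact that the connected components of an upward closed $S\subseteq\lambda$ are themselves upward closed, and so are genuine connected skew partitions: given $\Box=(i,j)\in S$ and $\Box\leq\Box'=(i',j')\in\lambda$, the staircase path running from $\Box$ down the column to $(i',j)$ and then right to $\Box'$ stays inside $\lambda$ (for $i''\leq i'$ one has $\lambda_{i''}\geq\lambda_{i'}>j'\geq j$, so every box on the path lies in $\lambda$) and consists of boxes $\geq\Box$, hence lies in $S$; therefore $\Box$ and $\Box'$ lie in the same component, and every component is upward closed.

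Writing each $L_k=\coprod_j C_{k,j}$ as the disjoint union of its connected components, I would then define $m_{\lambda/\mu}$ to be the number of pairs $(k,j)$ with $C_{k,j}=\lambda/\mu$. For a fixed box $\Box$, summing over the connected skew partitions containing it counts exactly the pairs $(k,j)$ with $\Box\in C_{k,j}$; for each $k$ with $\Box\in L_k$ there is precisely one such $j$, and $\Box\in L_k$ holds exactly for $k=1,\dots,n_\Box$. Hence $\sum_{\lambda/\mu\ni\Box} m_{\lambda/\mu}=n_\Box$, i.e.\ $\bmm\vdash\bn$, which proves the existence statement.

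For uniqueness under the hypothesis $n_\Box\leq 1$, note that then $\bn$ is the indicator function of the single filter $L\coloneqq L_1$. If $\bmm\vdash\bn$ is any preimage, then for each $\lambda/\mu$ with $m_{\lambda/\mu}>0$ and each $\Box\in\lambda/\mu$ one has $n_\Box\geq m_{\lambda/\mu}$, which forces $m_{\lambda/\mu}=1$ and $\Box\in L$; and for each $\Box\in L$ the identity $\sum_{\lambda/\mu\ni\Box}m_{\lambda/\mu}=1$ shows that exactly one supported skew partition contains $\Box$. Thus the support of $\bmm$ is a partition of $L$ into connected skew partitions, each of multiplicity one. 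Since a connected subset of $L$ lies in a single connected component, it remains to check that such a partition must coincide with the decomposition of $L$ into connected components --- equivalently, that a connected skew partition is \emph{indecomposable}: it admits no splitting $C=P\sqcup P'$ into two nonempty upward closed subsets (a union of upward closed sets is upward closed, so the case of several parts reduces to two). This indecomposability is the crux and the step I expect to be the main, if short, obstacle. I would argue it as follows: connectedness of $C$ produces edge-adjacent boxes $\Box\in P$ and $\Box'\in P'$, adjacency forces $\Box\leq\Box'$ or $\Box'\leq\Box$, and upward closure of whichever part contains the smaller box then drags the larger box into the same part, contradicting disjointness. Hence the partition is pinned to the connected components of $L$, all multiplicities equal one, and $\bmm$ is unique.
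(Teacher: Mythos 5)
Your proof is correct and follows essentially the same route as the paper's: both rest on the observation that the connected components of an upward-closed subset of $\lambda$ are themselves connected skew partitions and that this is the \emph{only} way to decompose such a subset into connected skew partitions, which is exactly what pins down $\bmm$ when $n_\Box\leq 1$. The only cosmetic difference is in the existence step, where you build $\bmm$ directly from the superlevel sets $L_k$ while the paper peels off one connected component of the support at a time by induction on $\lvert\bn\rvert$.
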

	\begin{proof}
		First note that any possibly non-connected skew partition $\lambda/\mu$ is uniquely a disjoint union of connected skew partitions. Indeed, for existence note that the connected components $\lambda/\mu$ are connected skew partitions. For uniqueness note that if $\lambda/\mu = \lambda/\mu_1\sqcup \dots \sqcup\lambda/\mu_n$, then any $\lambda/\mu_i\subset \lambda/\mu$ must be closed under $\leq$ and $\geq$ hence making it a maximal connected subset. Applying this fact to 
		\[
			\lambda/\mu\coloneqq \Set{\Box\in\lambda | n_\Box = 1}
		\]
		we obtain the second claim.\\
		We show the first claim by induction on $\lvert \bn\rvert$. For this let $\lambda/\mu_0$ be a connected component of $ \Set{\Box\in\lambda| n_\Box >  0}$. One easily sees that $\bn'$ defined by 
		\[
			n'_\Box \coloneqq \begin{cases}
			n_\Box-1, \text{ if } \Box\in \lambda/\mu_0,\\
			n_\Box,\text{ else}
		\end{cases}
		\]
		is again a reverse plane partition of smaller size. Hence for any $\bmm'\vdash \bn'$ we get an $\bmm\vdash\bn$ defined by
		\[
			m_{\lambda/\mu} \coloneqq \begin{cases} m'_{\lambda/\mu}+1, \text{ if }\mu = \mu_0\\ m'_{\lambda/\mu},\text{ else.} \end{cases}.
		\]
	\end{proof}
	\subsection{Double nested Hilbert schemes}
	For the rest of this section we fix a partition $\lambda$, a reverse plane partition $\mathbf{n} = (n_\Box)_{\Box\in \lambda}$ and a smooth projective curve $C$ over the complex numbers. We further denote by
	\[
		C^{(n)}\coloneqq \Hilb^n(C) = C^n/\Sym_n
	\]
	the Hilbert scheme of $n$ points on $C$.
	\begin{definition}
		For any tuple of natural numbers $\bmm=(m_i)_{i=1}^l$ we write
		\[
			C^{(\bmm)}\coloneqq \prod_{i=1}^l C^{(m_i)}
		\]
		for the product of Hilbert schemes of $C$. \\
		Furthermore,
		the \textit{double nested Hilbert scheme} associated to $\bn$ is defined as
		\[
		C^{[\bn]} \coloneqq \Set{
			(D_{\Box})_{\Box\in \lambda} | \begin{array}{l} D_{\Box} \subset C\mbox{ divisor of length } n_{\Box} \mbox{ such that}\\ 
			\mbox{for } \Box\leq \Box' \mbox{ we have } D_{\Box}\subset D_{\Box'}\end{array}}\subset C^{(\bn)}.
		\]
	\end{definition}
	\begin{remark}
		More precisely, $C^{[\bn]}$ is the scheme representing the obvious moduli functor. For more details see \cite[Section 2.2]{MonavariOG}.
	\end{remark}
	Given a tuple of nonnegative numbers $\bmm = (m_{\lambda/\mu})_{\lambda/\mu}$ so that $\bmm\vdash \bn$ we get an induced map
	\begin{align*}
		\phi_{\bmm,\bn}\colon C^{(\bmm)}&\longrightarrow C^{[\bn]}\\
		(D_{\lambda/\mu})_{\lambda/\mu} &\longmapsto \left( \sum_{\substack{\Box\in\lambda/\mu}} D_{\lambda/\mu}\right)_{\Box\in \lambda}.
	\end{align*}
	Taking the disjoint union over all such tuples we obtain
	\[
		\phi\colon \coprod_{\bmm\vdash\bn} C^{(\bmm)} \longrightarrow C^{[\bn]}.
	\]
	\begin{proposition}\label{prop: irrcomps}
		The morphism $\phi$ is birational and both sides are pure of dimension 
		\[
			n_{0,0} - \sum_{\substack{(i,j),(k,l)\in\lambda\\0\leq a,b\leq 1\\(k,l) = (i+a,j+b)}} (-1)^{a+b} (n_{k,l}-n_{i,j}).
		\]
		In particular, the fundamental class of $C^{[\mathbf{n}]}$ can be written as
		\[
		\left[C^{[\bn]}\right] = \sum_{\bmm\vdash\bn} \phi_* \left[C^{(\bmm)}\right]
		\]
	\end{proposition}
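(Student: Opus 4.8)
The plan is to show that $\phi$ restricts to an isomorphism over a dense open subset of $C^{[\bn]}$ hitting every irreducible component exactly once, and to extract the dimension and the cycle identity from this local picture. The starting observation is that the displayed dimension expression is \emph{linear} in $\bn$, while the relation $\bmm\vdash\bn$ records precisely the decomposition $\bn=\sum_{\lambda/\mu}m_{\lambda/\mu}\mathbf{1}_{\lambda/\mu}$, where $\mathbf{1}_{\lambda/\mu}=\big(\,[\Box\in\lambda/\mu]\,\big)_{\Box\in\lambda}$ is the indicator reverse plane partition of a connected skew partition. So I would first check the purely combinatorial identity that the dimension expression evaluated on $\mathbf{1}_{\lambda/\mu}$ equals $1$ for every \emph{connected} skew partition $\lambda/\mu$. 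By linearity this yields that the formula equals $\sum_{\lambda/\mu}m_{\lambda/\mu}=\dim C^{(\bmm)}$ for \emph{every} $\bmm\vdash\bn$; in particular it does not depend on the chosen decomposition. Since each $C^{(\bmm)}=\prod_{\lambda/\mu}C^{(m_{\lambda/\mu})}$ is smooth and irreducible of dimension $\sum_{\lambda/\mu}m_{\lambda/\mu}$, and $\{\bmm\vdash\bn\}$ is finite, the source $\coprod_{\bmm}C^{(\bmm)}$ is then pure of the stated dimension. The identity for $\mathbf{1}_{\lambda/\mu}$ is a telescoping/boundary computation in which connectedness of $\lambda/\mu$ is exactly what forces the value $1$ (rather than the number of connected components), and I would record it among the combinatorial lemmas of Section~\ref{sect: combi}.

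Next I would analyze $\phi$ on points using Lemma~\ref{lemma: stoopidfuckinlemma}. For set-theoretic surjectivity, given $(D_\Box)\in C^{[\bn]}$ and a point $x\in C$, the multiplicities $\big(\mathrm{mult}_x D_\Box\big)_{\Box}$ form a reverse plane partition; the first part of Lemma~\ref{lemma: stoopidfuckinlemma} decomposes it as some $\bmm(x)$, and assembling $D_{\lambda/\mu}\coloneqq\sum_x m_{\lambda/\mu}(x)\,x$ produces a preimage of $(D_\Box)$ under the corresponding $\phi_{\bmm,\bn}$, so $\phi$ is onto. For generic injectivity let $U_{\bmm}\subset C^{(\bmm)}$ be the open locus where all the $D_{\lambda/\mu}$ are reduced with pairwise disjoint supports. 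Over $U_{\bmm}$ each support point meets a single block, so the local reverse plane partition there is the $\{0,1\}$-valued indicator of one connected skew shape; the uniqueness clause of Lemma~\ref{lemma: stoopidfuckinlemma} then reconstructs the blocks from $(D_\Box)$. Hence $\phi_{\bmm}|_{U_{\bmm}}$ is injective, and distinct $\bmm$ have disjoint images on these loci.

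I would then pin down the scheme structure near such a generic point. Since the supports are distinct there, the double nesting decouples across support points, so \'etale-locally $C^{[\bn]}$ is a product of one-point double nested Hilbert schemes, each attached to a single reduced point carrying a connected $\{0,1\}$-valued shape and hence isomorphic to a copy of $C$. Thus $C^{[\bn]}$ is smooth of the formula-dimension along $\phi_{\bmm}(U_{\bmm})$, and $\phi_{\bmm}|_{U_{\bmm}}$ is an isomorphism onto an open subset. Combined with surjectivity, every irreducible component of $C^{[\bn]}$ is the closure $Z_{\bmm}\coloneqq\overline{\phi_{\bmm}(C^{(\bmm)})}$ of the image of some irreducible $C^{(\bmm)}$, so $\dim Z_{\bmm}=\dim C^{(\bmm)}$ equals the formula (purity of the target), and distinct $\bmm$ give distinct components by the previous paragraph. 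This is exactly the assertion that $\phi$ is birational.

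Finally, each $Z_{\bmm}$ is generically smooth, and $\phi_{\bmm}\colon C^{(\bmm)}\to Z_{\bmm}$ is birational with smooth source, so $\phi_{\bmm\,*}\big[C^{(\bmm)}\big]=[Z_{\bmm}]$ with multiplicity one; summing over $\bmm\vdash\bn$ gives $\big[C^{[\bn]}\big]=\sum_{\bmm}\phi_{*}\big[C^{(\bmm)}\big]$. The main obstacle I anticipate is the third step: rigorously justifying the \'etale-local product decomposition of $C^{[\bn]}$ at a generic point of a component, since both purity of the target and the multiplicity-one needed for the cycle identity rest on this generic smoothness. By comparison the combinatorial identity of the first paragraph and the point-level analysis via Lemma~\ref{lemma: stoopidfuckinlemma} are comparatively routine.
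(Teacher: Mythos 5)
Your proposal is correct in outline and reaches the statement by a genuinely different route at the two places where the paper's proof does real work. For the dimension count, you observe that the displayed formula is linear in $\bn$ and evaluates to $1$ on the indicator of each connected skew shape, so that it equals $\sum_{\lambda/\mu}m_{\lambda/\mu}=\dim C^{(\bmm)}$ for \emph{every} $\bmm\vdash\bn$; the paper never isolates this identity and instead obtains the dimension of the target as $\dim\CU$ minus the number of defining equations. Your version is a clean addition, since knowing $\dim C^{(\bmm)}$ directly for all $\bmm$ rules out one closure $\overline{\phi(C^{(\bmm)})}$ being properly contained in another, which the paper only gets a posteriori from purity of $C^{[\bn]}$. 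The surjectivity and generic injectivity via Lemma \ref{lemma: stoopidfuckinlemma} are the same in both arguments. For generic smoothness --- the crux --- the paper embeds $C^{[\bn]}$ into a product of symmetric products of consecutive differences, rewrites the nesting conditions as power-sum equations, and shows the Jacobian has maximal rank by induction on $\lvert\lambda\rvert$ via a Vandermonde argument; this works on the larger locus where the difference divisors are reduced (supports may still collide) and yields the stronger byproduct, recorded in the remark after the proposition, that $C^{[\bn]}$ is a local complete intersection. You instead work only on the smaller dense locus of disjoint reduced supports and reduce to an \'etale-local product of one-point pieces. The step you rightly flag as the main obstacle is fillable and elementary: the decoupling over disjoint supports is the standard local decomposition of divisor functors on a curve (the nesting condition is local on $C$), and for the one-point factor a degree-one relative divisor is the graph of a morphism to $C$, nesting of two degree-one divisors forces equality of the graphs, and connectedness of the skew shape (edge-adjacent boxes are comparable) forces all graphs equal, so the local factor is the small diagonal $\cong C$. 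Once that paragraph is written out your argument proves the proposition in full; what it does not recover is the lci statement and smoothness on the larger open set, neither of which is needed for the statement at hand.
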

	\begin{proof}
		To show that $\phi$ is birational it suffices to show that $\phi$ restricts to a bijection $\phi^{-1}(U)\to U$ of dense open subsets and that $C^{[\bn]}$ is generically smooth and pure of the desired dimension. Indeed, this would imply that the domain and codomain of $\phi$ are both pure of the same dimension and by restricting to an appropriate dense open subset of $C^{[\bn]}$, \cite[Proposition 3.17]{Mumford} then implies that $\phi$ restricts to a degree $1$ map to a normal scheme which must be birational.\\
		We first show that $\phi$ is surjective, hence take an arbitrary $\mathbf{D} =(D_\Box)_{\Box\in \lambda}\in C^{[\bn]}$. It suffices to treat the case when $\mathbf{D}$ is supported on a single point $x\in C$ since any $\mathbf{D}$ is a sum of such tuples. In this case it is easy to see that $\mathbf{D}$ is in the image of $\phi_{\bmm,\bn}$ for any $\bmm\vdash\bn$ and by Lemma \ref{lemma: stoopidfuckinlemma} such an $\bmm$ exists.\\
		Furthermore, this process gives a unique preimage if $\mathbf{D}\in S$ for $S\subset C^{[\bn]}$ the open set of tuples $\mathbf{D}$ for which $D_\Box$ is reduced for any $\Box\in\lambda$. Denoting by $S'\subset\coprod_{\bmm\vdash\bn} C^{(\bmm)}$ the dense open consisting of tuples of mutually disjoint reduced divisors it follows that $\phi(S')\subset S$ and by surjectivity of $\phi$ and denseness of $S'$ it follows that $S$ and $\phi^{-1}(S)$ must also be dense. This establishes the generic injectivity.\\
		It remains to check generic smoothness of the double nested Hilbert scheme. For this we consider the closed embedding
		\begin{align*}
			C^{[\bn]} &\longrightarrow C^{(n_{0,0})}\times \prod_{\substack{(i,j)\in \lambda\\ j\geq 1}} C^{(n_{i,j}-n_{i,j-1})}\times \prod_{\substack{(i,j)\in \lambda\\ i\geq 1}} C^{(n_{i,j}-n_{i-1,j})}\eqqcolon X\\
			(D_{i,j})_{(i,j)\in \lambda} &\longmapsto (D_{0,0},(D_{i,j}-D_{i,j-1})_{(i,j)\in\lambda},(D_{i,j}-D_{i-1,j})_{(i,j)\in\lambda}).
		\end{align*}
		This embedding was already considered in \cite[Section 2.4]{MonavariOG} in which it was noted that $C^{[\bn]}$ is cut out by the set of equations
		\[
		D^1_{i-1,j}+D^2_{i,j} = D^2_{i,j-1} + D^1_{i,j}
		\]
		where $(i,j)\in \lambda$ is any box with $i,j\geq 1$ and we denote a point in $X$ by $(D^0,(D^1_\Box)_{\Box\in\lambda},(D^2_\Box)_{\Box\in\lambda})$. Letting $\CU\subset X$ be the open set consisting of tuples of reduced divisors we note that $\CU\cap C^{[\bn]}\subset C^{[\bn]}$ is dense since it contains $S$ defined above. It therefore suffices to show that $\CU\cap C^{[\bn]}$ is smooth. Under the product of the quotient maps $C^n\twoheadrightarrow C^n/S_n = C^{(n)}$ one can pull $\CU$ back to an open set:
		\[
		\widetilde{\CU}\subset C^{n_{0,0}}\times \prod_{\substack{(i,j)\in\lambda\\ j\geq 1}} C^{n_{i,j}-n_{i,j-1}}\times \prod_{\substack{(i,j)\in\lambda\\ i\geq 1}} C^{n_{i,j}-n_{i-1,j}}
		\]
		giving an étale cover $\widetilde{\CU}\to\CU$. This reduces us to showing that the preimage of $\CU\cap C^{[\bn]}$ in $\widetilde{\CU}$, which we denote by $\widetilde{\CU}\cap C^{[\bn]}$, is smooth. Since smoothness is étale local, we may further assume $C = \BA^1$, in which case we denote the coordinates of $\widetilde{\CU}$ by $s_{l},t_{\Box,l},u_{\Box,l}$. The equations cutting out $\widetilde{\CU}\cap C^{[\bn]}$ therefore become:
		\[
		f_{i,j,m} \coloneqq \sum_{l=0}^{n_{i-1,j}-n_{i-1,j-1}} t_{i-1,j,l}^m + \sum_{l=0}^{n_{i,j}-n_{i-1,j}} u_{i,j,l}^m - \sum_{l=0}^{n_{i,j-1}-n_{i-1,j-1}} u_{i,j-1,l}^m - \sum_{l=0}^{n_{i,j}-n_{i,j-1}} t_{i,j,l}^m
		\]
		for $1\leq m \leq n_{i,j}-n_{i-1,j-1}$. We will now show that the Jacobian of these equations has maximal rank by induction on $\lvert\lambda\rvert$. For this we pick point in $\widetilde{\CU}\cap C^{[\bn]}$, a box $(i_0,j_0)\in\lambda$ with $(i_0+1,j_0), (i_0,j_0+1)\not\in\lambda$ and set $\widetilde{\lambda}\coloneqq \lambda\setminus\{(i_0,j_0)\}$. We now need to show that if for a given tuple of complex numbers $(a_{i,j,m})_{i,j,m}$ one has
		\[
		\sum_{i',j',m'} a_{i',j',m'} \cdot\partial_{t_{i,j,l}} f_{i',j',m'} = 0 \text{ and }\sum_{i',j',m'} a_{i',j',m'} \cdot\partial_{u_{i,j,l}} f_{i',j',m'} = 0 
		\]
		at that point for all $i,j,l$, then we must have $a_{i_0,j_0,m}=0$ for all $m$ as the claim for $\widetilde{\lambda}$ gives the rest. Indeed,  looking at partials with respect to $u_{i_0,j_0,l}$ the above equations yield in particular
		\[
		\sum_m a_{i_0,j_0,m} \cdot m u_{i_0,j_0,l}^{m-1} = 0
		\]
		for all $l$, which gives $a_{i_0,j_0,m}=0$ by the distinctness of the $u_{i_0,j_0,l}$ and the invertibility of the Vandermonde matrix.
		Furthermore, this implies that $C^{[\bn]}$ is pure of dimension
		\begin{align*}
			&\dim\left( \CU\right) - \sum_{\substack{(i,j)\in\lambda\\i,j\geq 1}} (n_{i,j}-n_{i-1,j-1}) \\ &= n_{0,0} + \sum_{\substack{(i,j)\in\lambda\\i\geq 1}} (n_{i,j}-n_{i-1,j})+ \sum_{\substack{(i,j)\in\lambda\\j\geq 1}} (n_{i,j}-n_{i,j-1})- \sum_{\substack{(i,j)\in\lambda\\i,j\geq 1}} (n_{i,j}-n_{i-1,j-1})
		\end{align*}
		as desired.
	\end{proof}
	\begin{remark}
		Note that the proof in particular shows that $C^{[\bn]}$ is a local complete intersection. A more thorough study of the geometry of $C^{[\bn]}$ has been undertaken in \cite{DNHSnew}, where Proposition \ref{prop: irrcomps} was independently proved using slightly different methods. In particular, it is shown that $C^{[\bn]}$ is connected and reduced and that $\phi$ is its normalization.
	\end{remark}
	\section{Description of the fixed loci and virtual normal bundle}\label{sect: Fixed}
	In this section we will recall the discussion of \cite[Section 3.3]{MonavariOG} and explore the consequences that Proposition \ref{prop: irrcomps} has for our stable pair calculation. \\
	Resuming the notation in the introduction we let $X = \Tot_C(L_1\oplus L_2)$ be the local curve over $C$ with line bundles $L_i$ of degree $l_i$ and $T=(\BC^*)^2$ the torus acting on $X$.\\
	In order to compute the equivariant stable pair theory of $X$ we must first compute the fixed locus $P_n(X,d)^T$ of the induced $T$-action.\\ Indeed, an element $[\CO_X\to \CF]\in P_n(X,d)^T$ is the same as an equivariant stable pair. By pushing it down to $C$ and decomposing it into its weight spaces, this must be of the shape
	\[
		s=(s_{i,j})\colon\CO_X = \bigoplus_{i,j\geq 0} L_1^{-i}\otimes L_2^{-j} \cdot \bt_1^{-i} \bt_2^{-j} \longrightarrow \bigoplus_{i,j\geq 0} \CF_{i,j}\otimes L_1^{-i}\otimes L_2^{-j} \cdot \bt_1^{-i} \bt_2^{-j}
	\]
	for some coherent $\CF_{i,j}$ on $C$ and morphisms $s_{i,j}\colon \CO_C\to \CF_{i,j}$. Since $\CF$ is of compact support, we must have $\CF_{i,j}=0$ for all but finitely many $i,j$ and the stability is equivalent to each $\CF_{i,j}$ being pure of dimension 1 (hence locally free) and each $s_{i,j}$ having finite cokernel. For any given $i,j$ this forces either $\CF_{i,j} = \CO_C(D_{i,j})$ for some effective divisors $D_{i,j}\subset C$ and $s_{i,j}$ the canonical inclusion or $\CF_{i,j}=0$ and $s_{i,j}=0$. We write $S\subset \BN_0^2$ for the set on which the former happens. The compatibility of $s$ with the multiplication on $\CO_X$ is then equivalent to $S = \lambda$ for some partition $\lambda$ and $D_{\Box}\subset D_{\Box'}$ for any $\Box\leq \Box'\in\lambda$. Therefore $[\CO_X\to \CF]$ corresponds to an element $(D_\Box)_{\Box\in\lambda}\in C^{[\bn]}$ for $n_\Box = \deg D_\Box$. Since this argument can also be performed in flat families, one gets:
	\begin{proposition}[{\cite[Proposition 3.1]{MonavariOG}}]\label{prop: fixedprop}
		The $T$-fixed locus of $P_n(X,d)$ is a disjoint union
		\[
		P_{n}(X,d)^T = \coprod_{\lambda \vdash d} \coprod_{\textbf{n}} C^{[\bn]}
		\]
		where the second disjoint union is over those reverse plane partitions $\textbf{n} = (n_\Box)_{\Box\in\lambda}$ satisfying
		\[
		d(1-g)-n(\lambda)\cdot l_1-n(\bar{\lambda})\cdot l_2 + \lvert\bn\rvert  = n.
		\]
		Furthermore, the K-theory class of the universal stable pair on a component of the fixed locus $C^{[\bn]}$ is given by 
		\[
		\BF = \sum_{(i,j)\in\lambda} \iota_*\CO_{C\times C^{[\bn]}}(\CD_{i,j})\otimes L_1^{-i}\otimes L_2^{-j} \cdot \mathbf{t}_1^{-i} \mathbf{t}_2^{-j} \in K_T^0(X\times C^{[\bn]})
		\]
		where $\CD_{i,j}\subset C\times C^{[\bn]}$ is the universal divisor at the box $(i,j)\in\lambda$ and $\iota\colon C\times C^{[\bn]}\hookrightarrow X\times C^{[\bn]}$ is the inclusion of the zero-section. We wrote $\bt_i\in K_T^0(\pt)$ for the K-theory classes associated to the standard coordinate representations of $T$.\qed
	\end{proposition}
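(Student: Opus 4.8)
The plan is to classify the $T$-fixed stable pairs by a weight-space decomposition and then upgrade the resulting bijection to an isomorphism of moduli spaces. Since the projection $\pi\colon X=\Tot_C(L_1\oplus L_2)\to C$ is affine, a $T$-equivariant coherent sheaf on $X$ is the same datum as a $T$-graded module over the graded algebra $\pi_*\CO_X=\Sym(L_1^{-1}\oplus L_2^{-1})=\bigoplus_{i,j\geq 0}L_1^{-i}\otimes L_2^{-j}\,\bt_1^{-i}\bt_2^{-j}$. First I would take a $T$-fixed pair $[\CO_X\to\CF]$, decompose its pushforward as $\pi_*\CF=\bigoplus_{i,j\geq 0}\CF_{i,j}\otimes L_1^{-i}\otimes L_2^{-j}\,\bt_1^{-i}\bt_2^{-j}$ with $\CF_{i,j}$ coherent on $C$, and split the section into weight components $s_{i,j}\colon\CO_C\to\CF_{i,j}$. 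The two degree-one generators of $\pi_*\CO_X$ then act as maps $\CF_{i,j}\to\CF_{i+1,j}$ and $\CF_{i,j}\to\CF_{i,j+1}$ and record the entire module structure.

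Next I would translate the stability conditions weight by weight. Purity of $\CF$ forces each $\CF_{i,j}$ to be torsion-free, hence locally free on the smooth curve $C$; finiteness of $\operatorname{coker}(s)$ then forces each pair $(\CF_{i,j},s_{i,j})$ to be either $(0,0)$ or a line bundle together with a nonzero map from $\CO_C$ with torsion cokernel, i.e.\ $\CF_{i,j}=\CO_C(D_{i,j})$ with $D_{i,j}$ effective and $s_{i,j}$ the tautological inclusion. Writing $S=\{(i,j)\mid\CF_{i,j}\neq0\}$, compatibility of the $s_{i,j}$ with the two multiplication maps — together with the requirement that $\CF$ be generated by $s(1)$ up to finite length — forces $S$ to be downward closed, i.e.\ the Young diagram of a partition $\lambda\vdash d$, and forces the nesting $D_\Box\subset D_{\Box'}$ whenever $\Box\leq\Box'$. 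This produces a point $(D_\Box)_{\Box\in\lambda}\in C^{[\bn]}$ with $n_\Box=\deg D_\Box$, and since the construction is reversible it is a bijection on points.

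It remains to cut out the correct reverse plane partitions by imposing $\chi(\CF)=n$. From the weight decomposition one has $\chi(\CF)=\sum_{(i,j)\in\lambda}\chi\bigl(\CO_C(D_{i,j})\otimes L_1^{-i}\otimes L_2^{-j}\bigr)$, and Riemann--Roch on the genus-$g$ curve $C$ gives each summand as $n_{i,j}-i\,l_1-j\,l_2+(1-g)$. Summing and using $\sum_{(i,j)\in\lambda}i=n(\lambda)$, $\sum_{(i,j)\in\lambda}j=n(\overline{\lambda})$, $\sum_{(i,j)\in\lambda}n_{i,j}=\lvert\bn\rvert$ and $\lvert\lambda\rvert=d$ reproduces exactly the stated constraint $d(1-g)-n(\lambda)l_1-n(\overline{\lambda})l_2+\lvert\bn\rvert=n$.

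The main obstacle is promoting this pointwise classification to an identification of $P_n(X,d)^T$ with $\coprod_{\lambda,\bn}C^{[\bn]}$ as schemes, since the fixed locus carries a nontrivial scheme structure that a bijection on points does not see. Here I would rerun the entire weight-space analysis over an arbitrary base $B$, i.e.\ for a $T$-equivariant family of stable pairs on $X\times B\to B$: the $\CF_{i,j}$ and $s_{i,j}$ become $B$-flat families of divisors satisfying the nesting conditions, which is precisely the moduli functor represented by $C^{[\bn]}$; this matching of functors is the content of \cite[Proposition 3.1]{MonavariOG}. Once the identification is in place, the universal stable pair is read off directly from the decomposition: substituting the universal divisor $\CD_{i,j}\subset C\times C^{[\bn]}$ for $D_{i,j}$ and pushing forward along the zero section $\iota$ yields $\BF=\sum_{(i,j)\in\lambda}\iota_*\CO_{C\times C^{[\bn]}}(\CD_{i,j})\otimes L_1^{-i}\otimes L_2^{-j}\,\bt_1^{-i}\bt_2^{-j}$, as claimed.
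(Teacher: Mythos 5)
Your proposal is correct and follows essentially the same route as the paper: push forward along the affine projection to $C$, decompose into $T$-weight spaces, translate purity and finiteness of the cokernel weight by weight to get $\CF_{i,j}=\CO_C(D_{i,j})$ indexed by a Young diagram with nested divisors, and note that the argument works in flat families. The only addition is that you spell out the Riemann--Roch computation of $\chi(\CF)$ giving the constraint on $\lvert\bn\rvert$, which the paper leaves implicit.
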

	Using this description and Proposition \ref{prop: irrcomps} one can now express stable pair invariants on $X$ in terms of integrals on symmetric products of C:
	\begin{proposition}\label{prop: localization}
		Given an insertion of the shape $\gamma = \ch_{z_1}(\gamma_1) \ldots \ch_{z_n}(\gamma_n)$ on $X$, the associated stable pair invariant in degree $d$ can be written as
		\begin{equation}\label{eqn: appliedlocalizationformula}
		\langle \gamma \rangle_{d}^{X,T} = p^{d(1-g)}\sum_{\lambda\vdash d} \sum_{\substack{\mathbf{m}=(m_{\lambda/\mu})_{\lambda/\mu}\\ m_{\lambda/\mu}\geq 0}} p^{\lVert\bmm\rVert-n(\lambda)\cdot l_1 - n(\bar{\lambda})\cdot l_2}\int_{C^{(\bmm)}}\frac{\widetilde{\gamma}}{e(N_{\mathbf{m}})}.\end{equation}
		The K-theory class $N_{\mathbf{m}}\in K_{T}^0\left(C^{(\bmm)}\right)$ is given by 
		\begin{align*}
			N_{\mathbf{m}} = &\sum_{\substack{(i,j)\in\lambda\\(i,j)\neq 0}} R\pi_*\left(\CO_{C\times C^{(\bmm)}}(\CD_{i,j})\otimes L_1^{-i}\otimes L_2^{-j}\right) \cdot \mathbf{t}_1^{-i}\mathbf{t}_2^{-j}\\
			&+\sum_{(i,j)\in\lambda} R\pi_*\left(\CO_{C\times C^{(\bmm)}}\left(-\CD_{i,j}\right)\otimes L_1^{i+1}\otimes L_2^{j+1}\right) \cdot \mathbf{t}_1^{i+1}\mathbf{t}_2^{j+1}\\
			&-\sum_{\substack{(i,j),(k,l)\in\lambda\\ 0\leq a,b\leq 1\\(i+a,j+b)\neq (k,l)}} (-1)^{a+b} R\pi_*\left(\CO_{C\times C^{(\bmm)}}\left(\CD_{k,l}-\CD_{i,j}\right)\otimes L_1^{i-k+a}\otimes L_2^{j-l+b}\right)\cdot \bt_1^{i-k+a}\bt_2^{j-l+b}
		\end{align*}
		where 
		\[
		\CD_{\Box} = \sum_{\Box\in\lambda/\mu}\CD_{\lambda/\mu}
		\]
		is a sum over universal divisors $\CD_{\lambda/\mu}\subset C\times C^{(\bmm)}$ and $\pi\colon C\times C^{(\bmm)}\to C^{(\bmm)}$ is the projection onto the second factor. Furthermore, we have
		\[
			\widetilde{\gamma} = \widetilde{\ch_{z_1}(\gamma_1)}\ldots \widetilde{\ch_{z_n}(\gamma_n)}	\in H^*(C^{(\bmm)})		
		\]
		where 
		\begin{equation}\label{eqn: GRRondescendents}
			\widetilde{\ch_{z}(\gamma)} \coloneqq  \frac{(1-e^{-t_1 z})(1-e^{-t_2 z})}{t_1 t_2} \pi_*\left[\ch_z(\widetilde{\BF})  \cdot\bigg(1 + z\pt_C\Big(l_1\CB(zt_1)+l_2\CB(zt_2)\Big)\bigg)\cdot \pi'^*\gamma\right]
		\end{equation}
		with $\pt_C\in H^2(C\times C^{(\bmm)})$ the pullback of the point class along the projection $\pi'\colon C\times C^{(\bmm)}\to C$ to the first factor and
		\begin{equation}\label{eqn: Ftilde}
		\widetilde{\BF} \coloneqq \sum_{(i,j)\in\lambda} \CO_{C\times C^{[\bn]}}(\CD_{i,j})\otimes L_1^{-i}\otimes L_2^{-j} \cdot \mathbf{t}_1^{-i} \mathbf{t}_2^{-j} \in K_T^0(C\times C^{[\bn]}).
		\end{equation}
	\end{proposition}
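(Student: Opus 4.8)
\emph{Sketch of the intended argument.} The plan is to apply the virtual localization formula of \cite{localization} (with the obstruction theory of \cite{BehrendFantechi}) with respect to $T$, to organize the fixed contributions through Proposition \ref{prop: fixedprop}, and then to convert each fixed-point integral into integrals over the smooth products $C^{(\bmm)}$ by means of Proposition \ref{prop: irrcomps}. Localization gives
\[
\langle\gamma\rangle_d^{X,T}=\sum_{m\in\ZZ}p^m\sum_{\lambda\vdash d}\sum_{\bn}\int_{[C^{[\bn]}]^{vir}}\frac{\eval{\gamma}_{C^{[\bn]}}}{e(N^{vir})},
\]
the inner sums being indexed as in Proposition \ref{prop: fixedprop}. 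The first and most delicate point I would establish is that the fixed part of the stable-pair obstruction theory makes $C^{[\bn]}$ virtually smooth of its actual dimension: since Proposition \ref{prop: irrcomps} identifies $C^{[\bn]}$ as a local complete intersection that is pure of exactly the expected dimension, the fixed obstruction theory carries no excess and $[C^{[\bn]}]^{vir}=[C^{[\bn]}]$. The class identity $[C^{[\bn]}]=\sum_{\bmm\vdash\bn}\phi_*[C^{(\bmm)}]$ from the same proposition then lets me pull $\eval{\gamma}_{C^{[\bn]}}$ and $1/e(N^{vir})$ back along $\phi$ and integrate over each smooth $C^{(\bmm)}$.

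Next I would track the powers of $p$. A component indexed by $(\lambda,\bn)$ lies over $m=n=d(1-g)-n(\lambda)l_1-n(\overline{\lambda})l_2+\lvert\bn\rvert$ by Proposition \ref{prop: fixedprop}, and $\bmm\vdash\bn$ gives $\lvert\bn\rvert=\lVert\bmm\rVert$; factoring out $p^{d(1-g)}$ produces exactly the weight $p^{\lVert\bmm\rVert-n(\lambda)l_1-n(\overline{\lambda})l_2}$ of the $C^{(\bmm)}$-summand in \eqref{eqn: appliedlocalizationformula}.

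For the virtual normal bundle I would use that $N^{vir}$ is the $T$-moving part of the virtual tangent complex. Writing $I^\bullet=[\CO_X\to\CF]$, in $K$-theory one has $T^{vir}=-\chi(I^\bullet,I^\bullet)_0=\chi(\CO_X,\CF)+\chi(\CF,\CO_X)-\chi(\CF,\CF)$, and I would evaluate each term from the explicit class of $\BF$ in Proposition \ref{prop: fixedprop} together with the Koszul resolution of the zero-section $\iota\colon C\hookrightarrow X$, whose normal bundle is $L_1\bt_1\oplus L_2\bt_2$. Since $\chi_X(\iota_*G)=\chi_C(G)$, the term $\chi(\CO_X,\CF)$ gives $\sum_{(i,j)}R\pi_*(\CO(\CD_{i,j})L_1^{-i}L_2^{-j})\bt_1^{-i}\bt_2^{-j}$, whose only $T$-fixed summand is the box $(0,0)$ and is dropped; using $\iota^!\CO_X=\det N[-2]=L_1L_2\bt_1\bt_2[-2]$, the term $\chi(\CF,\CO_X)$ gives the second sum, all of whose weights $\bt_1^{i+1}\bt_2^{j+1}$ are nontrivial; and via the Koszul factor $\sum_{a,b\in\{0,1\}}(-1)^{a+b}L_1^aL_2^b\bt_1^a\bt_2^b$, the term $-\chi(\CF,\CF)$ gives the third sum, whose $T$-fixed summands are precisely those with $(i+a,j+b)=(k,l)$ and hence omitted. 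Passing to moving parts and pushing along $\phi$ turns these Euler characteristics into the advertised $R\pi_*$-expressions defining $N_\bmm$.

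Finally, for the insertions I would restrict the descendent $\ch_z(\gamma)=\sum_k z^k\pi_{P*}(\ch_k(\BF)\pi_X^*\gamma)$ to the component, use $\BF|_{C^{[\bn]}}=\iota_*\widetilde{\BF}$ with $\widetilde{\BF}$ as in \eqref{eqn: Ftilde}, and apply Grothendieck--Riemann--Roch to the pushforward $X\times C^{[\bn]}\to C^{[\bn]}$ factored through $\iota$. The inverse Todd class of the embedding normal bundle $L_1\bt_1\oplus L_2\bt_2$, read off in the $z$-grading, produces the prefactor $(1-e^{-t_1z})(1-e^{-t_2z})/(t_1t_2)$, while the Todd class of $C$ together with the curvatures $c_1(L_i)=l_i\,\pt_C$ produces the Bernoulli correction $z\,\pt_C(l_1\CB(zt_1)+l_2\CB(zt_2))$, giving \eqref{eqn: GRRondescendents}; multiplying over the insertions yields $\widetilde{\gamma}$. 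The main obstacle is the very first step, namely reconciling the intrinsic fixed part of the stable-pair obstruction theory on $C^{[\bn]}$ with the honest local-complete-intersection geometry supplied by Proposition \ref{prop: irrcomps} so as to replace $[C^{[\bn]}]^{vir}$ by $[C^{[\bn]}]$; once this is in place, the remaining work is bookkeeping of $T$-weights, Koszul factors and the two Todd corrections.
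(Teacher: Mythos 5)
Your proposal follows essentially the same route as the paper: localization over the fixed loci of Proposition \ref{prop: fixedprop}, replacement of $[C^{[\bn]}]^{vir}$ by $[C^{[\bn]}]$, pushforward along $\phi$ using Proposition \ref{prop: irrcomps}, and equivariant Grothendieck--Riemann--Roch through the zero section for the descendent insertions; your Koszul-resolution derivation of the virtual normal bundle is a self-contained substitute for the paper's citation of \cite{MonavariOG}. The one step you assert rather than supply is the claim that Proposition \ref{prop: irrcomps} exhibits $C^{[\bn]}$ as pure of the \emph{expected} dimension: that proposition only computes the actual dimension, and identifying this number with $\mathrm{rk}\left(\eval{\BE^T}_{C^{[\bn]}}\right)$ is a separate Riemann--Roch computation (each fixed summand contributes $n_{k,l}-n_{i,j}+1-g$, and one needs the combinatorial identity $\sum_{(k,l)=(i+a,j+b)}(-1)^{a+b}=1$ over pairs of adjacent boxes to cancel the $1-g$ terms). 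Once that rank equality is established, Lemma \ref{lemma: vdim=actdim} delivers precisely the implication you want --- pure dimension equal to the rank of a perfect obstruction theory forces lci and $[X]^{vir}=[X]$ --- and the rest of your outline goes through as written, modulo the routine base-change compatibilities of $R\pi_*$ and $\pi_*$ with $\phi_{\bmm,\bn}$ that justify pulling the integrand back to $C^{(\bmm)}$.
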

	\begin{proof}
		Recall that one usually defines stable pair invariants of $X$ in an adhoc way that resembles the virtual localization formula \cite{localization} i.e.
		\begin{equation*}
			\langle \gamma \rangle_{n,d}^{X,T} \coloneqq \int_{[P_n(X,d)^T]^{vir}}\frac{\eval{\gamma}_{P_n(X,d)^T}}{e(N^{vir})}.
		\end{equation*}
		To define everything on the right hand side recall from \cite{HuyThomPOT} that $P_n(X,d)$ has a perfect obstruction theory which is the morphism in $\CD^b(P_n(X,d))$ given by the Atiyah class
		\[
			\BE = R \mathcal{H}om_\pi (\BI,\BI)^\vee_0 [-1] \to \BL_{P_n(X,d)}
		\]
		where the target is the cotangent complex $\BL_{P_n(X,d)}$ of $P_n(X,d)$ with $\BI = [\CO_{X\times P_n(X,d)}\to \CF]$ the universal stable pair on $X\times P_n(X,d)$ and $\pi\colon X\times P_n(X,d)\to P_n(X,d)$ the projection to the second factor. This can be seen to be $T$-equivariant \cite[Example 4.6]{RicolfiAtiyah} so that the invariant part 
		\[
			\eval{\BE^T}_{P_n(X,d)^T} \to \eval{\BL_{P_n(X,d)}^T}_{P_n(X,d)^T} = \BL_{P_n(X,d)^T}
		\]
		of the restriction to the fixed locus is again a perfect obstruction theory and by \cite{BehrendFantechi} this induces a virtual class $[P_n(X,d)^T]^{vir}\in H_*(P_n(X,d)^T)$ whose restriction to any connected component $C^{[\bn]}$ sits in degree $2\cdot \text{rk}\left( \eval{\BE^T}_{C^{[\bn]}}\right)$. Furthermore, the virtual normal bundle is defined as the K-theory class of the non-fixed part:
		\[
			N^{vir}\coloneqq (\eval{\BE^\vee}_{P_n(X,d)^T})^{mov}\in K^0_T(P_n(X,d)^T).
		\]
		Here, $K^0$ denotes the K-theory of locally free sheaves as opposed to $K_0$ the K-theory of coherent sheaves.
		In \cite[Section 4]{MonavariOG} the following identity in $K_0^T(P_n(X,d)^T)$ was shown:
		\begin{align}\label{eqn: Edualdefi}
		\begin{split}
			\eval{\BE^\vee}_{C^{[\bn]}} = &\sum_{(i,j)\in\lambda} R\pi_*\left(\CO_{C\times C^{[\bn]}}(\CD_{i,j})\otimes L_1^{-i}\otimes L_2^{-j}\right) \cdot \mathbf{t}_1^{-i}\mathbf{t}_2^{-j}\\
			&+\sum_{(i,j)\in\lambda} R\pi_*\left(\CO_{C\times C^{[\bn]]}}\left(-\CD_{i,j}\right)\otimes L_1^{i+1}\otimes L_2^{j+1}\right) \cdot \mathbf{t}_1^{i+1}\mathbf{t}_2^{j+1}\\
			&-\sum_{\substack{(i,j),(k,l)\in\lambda\\ 0\leq a,b\leq 1}} (-1)^{a+b} R\pi_*\left(\CO_{C\times C^{[\bn]}}\left(\CD_{k,l}-\CD_{i,j}\right)\otimes L_1^{i-k+a}\otimes L_2^{j-l+b}\right)\cdot \bt_1^{i-k+a}\bt_2^{j-l+b}
		\end{split}
		\end{align}
		where the $D_\Box\subset C\times C^{[\bn]}$ is the universal divisor corresponding to $\Box\in\lambda$ and $\pi\colon C\times C^{[\bn]}\to C^{[\bn]}$ is the projection to the second factor. However, the argument given there also works in $K_T^0(P_n(X,d)^T)$ with pushforwards along equivariant perfect morphisms being defined analogously to \cite[IV.2.12]{SGA6}.
		We now claim that the virtual class $[C^{[\bn]}]^{vir} = [C^{[\bn]}]$ is just the usual fundamental class. Indeed, using Lemma \ref{lemma: vdim=actdim} it suffices to show that $C^{[\bn]}$ is of dimension $\text{rk} \left( \eval{\BE^T}_{C^{[\bn]}}\right)$. Furthermore, one can use Riemann-Roch to calculate this rank:
		\begin{align*}
			\text{rk} \left( \eval{\BE^T}_{C^{[\bn]}}\right) &= \text{rk} \left( R\pi_*\left(\CO_{C\times C^{[\bn]}}(\CD_{0,0})\right)\right)\\ &-\sum_{\substack{(i,j),(k,l)\in\lambda\\ 0\leq a,b\leq 1\\(k,l)=(i+a,j+b)}} (-1)^{a+b} \text{rk}\left( R\pi_*\left(\CO_{C\times C^{[\bn]}}\left(\CD_{k,l}-\CD_{i,j}\right)\right) \right)\\
			&= n_{0,0}+1-g -\sum_{\substack{(i,j),(k,l)\in\lambda\\ 0\leq a,b\leq 1\\(k,l)=(i+a,j+b)}} (-1)^{a+b} \left(n_{k,l}-n_{i,j}+1-g\right)
		\end{align*}
		Using Proposition \ref{prop: irrcomps} it therefore suffices to show that 
		\[
			1 -\sum_{\substack{(i,j),(k,l)\in\lambda\\ 0\leq a,b\leq 1\\(k,l)=(i+a,j+b)}} (-1)^{a+b} \stackrel{!}{=} 0.
		\]
		Indeed, it is easy to see that in the second term the number of pairs $(i,j),(k,l)$ for which $(a,b)=(0,0),(1,0),(0,1)$ and $(1,1)$ is $d, d-l(\overline{\lambda}) = d-\lambda_0, d-l(\lambda)$ and $d-\lambda_0-l(\lambda)+1$ respectively. This establishes the claim. \\
		Because of the projection formula and Proposition \ref{prop: irrcomps} we now only need to show that 
		\begin{equation}\label{eqn: chphicommute}
		\phi_{\bmm,\bn}^*\ch_z(\gamma) = \widetilde{\ch_{z}(\gamma)}
		\end{equation}
		and	$L\phi_{\bmm,\bn}^*N^{vir} = N_{\bmm}$. For the second claim one looks at \eqref{eqn: Edualdefi} and sees that this would easily follow from $L\phi_{\bmm,\bn}^*R\pi_* = R\pi_* L\left(\Id\times\phi_{\bmm,\bn}\right)^*$ with the maps coming from the cartesian diagram:
		\begin{equation}\label{diag: DNHSSymmProdBaseChange}
			\begin{tikzcd}
				C\times C^{(\bmm)}\dar["\pi"] \rar["\Id\times\phi_{\bmm,\bn}"] & C\times C^{[\bn]}\dar["\pi"]\\
				C^{(\bmm)} \rar["\phi_{\bmm,\bn}"] &C^{[\bn]}
			\end{tikzcd}
		\end{equation}
		As $\pi$ is flat, $\phi_{\bmm,\bn}$ and $\pi$ are Tor-independent and therefore the desired commutativity of pushforward and pullback follows from \cite[Proposition IV.3.1.1]{SGA6}. To show \eqref{eqn: chphicommute} we recall that
		\[
			\ch_z(\gamma)= \widetilde{\pi}_*\left(\ch_z(\BF)\cdot(\widetilde{\pi}')^*\gamma\right)
		\]
		where $\widetilde{\pi}\colon X\times C^{[\bn]}\to C^{[\bn]}$ and $\widetilde{\pi}'\colon X\times C^{[\bn]}\to X$ are the projection to the second and first factor respectively. Using equivariant Grothendieck-Riemann-Roch \cite[Corollary after Theorem 1.1]{eqGRR} and $\BF = \iota_*\widetilde{\BF}$ for the zero section $\iota\colon C\times C^{[\bn]}\hookrightarrow X\times C^{[\bn]}$ we obtain
		\begin{align*}
		\ch_z(\BF) & =\iota_* \left(\ch_z(\widetilde{\BF})\cdot \td_{C/ X}^{-1}\right)
		= \iota_* \left(\ch_z(\widetilde{\BF})\cdot\frac{(1-e^{-(t_1+l_1\pt_C) z})(1-e^{-(t_2+l_2\pt_C) z})}{(t_1+l_1\pt_C) (t_2+l_2\pt_C)}\right)\\
		&=\frac{(1-e^{-t_1 z})(1-e^{-t_2 z})}{t_1 t_2} \iota_* \left[\ch_z(\widetilde{\BF})\cdot\bigg(1 + z\pt_C\Big(l_1\CB(zt_1)+l_2\CB(zt_2)\Big)\bigg) \right].
		\end{align*}
		From the projection formula and $\pi = \widetilde{\pi}\circ \iota$ and $\pi' = \widetilde{\pi}'\circ\iota$ it then follows that
		\begin{align*}
			\ch_z(\gamma)&=\frac{(1-e^{-t_1 z})(1-e^{-t_2 z})}{t_1 t_2} \pi_*\Bigg[\ch_z(\widetilde{\BF})\cdot
			 \bigg(1 + z\pt_C\Big(l_1\CB(zt_1)+l_2\CB(zt_2)\Big)\bigg)\cdot\pi'^*\gamma\Bigg]
		\end{align*}
		where pullbacks and pushforwards are along $\pi\colon C\times C^{[\bn]}\to C^{[\bn]}$ and $\pi'\colon C\times C^{[\bn]}\to C$. It now suffices to show that \eqref{diag: DNHSSymmProdBaseChange} satisfies  $\phi_{\bmm,\bn}^*\pi_* = \pi_* \left(\Id\times\phi_{\bmm,\bn}\right)^*$ in cohomology, which is dual to $\pi^*\left(\phi_{\bmm,\bn}\right)_* = \left(\Id\times\phi_{\bmm,\bn}\right)_*\pi^*$ in homology. This however follows from flat base change \cite[Theorem VIII.5.1(2)]{flatpullbackinhomology}, which finishes the proof.
	\end{proof}
	The following Lemma was needed in the above proof and is a well-known piece of folklore for which the author claims no originality. However, due to the apparent lack of a reference and for the sake of completeness we give a full proof.
	\begin{lemma}\label{lemma: vdim=actdim}
		Let $X$ be of finite type over some field $k$ and of pure dimension $d$ together with a perfect obstruction theory $\phi\colon\BE \to \BL_X$ of rank $\text{rk}  \left(\BE\right) = d$. It follows that $\phi$ is a quasi-isomorphism, $X$ is lci and the virtual class must agree with the fundamental class i.e.
		\[
			[X]^{vir} = [X]\in CH_d(X).
		\]
	\end{lemma}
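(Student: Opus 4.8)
The plan is to reduce all three assertions to an explicit local model and then feed a numerical coincidence into commutative algebra. Since being a quasi-isomorphism, being lci, and (via the intrinsic normal cone) the identity $[X]^{vir}=[X]$ are all local on $X$, I would work Zariski-locally and assume $X=\Spec A$ with $A=P/I$, $P=k[x_1,\dots,x_N]$, so that $M\coloneqq\Spec P$ is a smooth ambient scheme of dimension $N$ with $X\hookrightarrow M$ the closed embedding. Recall that $\tau_{\geq -1}\BL_X \simeq [I/I^2 \xrightarrow{d} \Omega_M|_X]$, and that since $\BE$ has cohomology only in degrees $-1$ and $0$ the map $\phi$ factors through this truncation. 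By the local normal form for perfect obstruction theories \cite{BehrendFantechi} I may choose the presentation so that $\BE$ is represented by a two-term complex of vector bundles $[E^{-1}\xrightarrow{\delta} E^0]$ with $E^0=\Omega_M|_X$, the degree-$0$ map the identity, and $\psi\colon E^{-1}\twoheadrightarrow I/I^2$ a surjection with $d\circ\psi=\delta$; one checks directly that this presentation realizes $h^0(\phi)$ as an isomorphism and $h^{-1}(\phi)$ as a surjection.

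The crucial step is the commutative algebra. As $E^0=\Omega_M|_X$ has rank $N$, the hypothesis $\text{rk}(\BE)=d$ forces $\text{rk}(E^{-1})=N-d$, and the surjection $\psi$ shows that $I/I^2$, hence by Nakayama $I$ itself, is locally generated by at most $N-d$ elements, i.e. $\mu(I)\leq N-d$. On the other hand, pure-dimensionality gives at every point $x$ that $\dim\CO_{X,x}=d-\dim\overline{\{x\}}$ while $\dim\CO_{M,x}=N-\dim\overline{\{x\}}$, so (using that the smooth ring $\CO_{M,x}$ is Cohen--Macaulay) $\text{ht}(I_x)=N-d$ locally. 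Krull's height theorem then yields $N-d=\text{ht}(I)\leq \mu(I)\leq N-d$, so $I$ is locally minimally generated by exactly $N-d$ elements of height $N-d$; in the Cohen--Macaulay ring $\CO_M$ such generators form a regular sequence \cite{Eisenbud}. Hence $X\hookrightarrow M$ is a regular embedding and $X$ is lci, which is the second assertion, and moreover $I/I^2$ is now locally free of rank $N-d$.

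Granting this, the first assertion is immediate: $\psi$ is a surjection of locally free sheaves of the same rank $N-d$, hence an isomorphism, and $E^0=\Omega_M|_X$ is the degree-$0$ term of $\BL_X$, so $\phi$ is an isomorphism of two-term complexes, in particular a quasi-isomorphism, and $\BL_X\simeq\BE$ is perfect of amplitude $[-1,0]$ and rank $d$. For the third assertion I would invoke the Behrend--Fantechi computation in the lci case: the obstruction theory $\phi$ now agrees with the tautological one $\BL_X\xrightarrow{\id}\BL_X$, and for the regular embedding $X\hookrightarrow M$ the normal cone $C_{X/M}$ coincides with the normal bundle $N_{X/M}$. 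Thus the virtual class is $0^!_{N_{X/M}}[C_{X/M}]=0^!_{N_{X/M}}[N_{X/M}]=[X]$, the final equality being the Gysin pullback of the fundamental class of a vector bundle along its zero section.

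The hard part will be the middle paragraph, namely extracting lci-ness from the purely numerical coincidence $\text{rk}(\BE)=\dim X$. The two inputs that make it work are the local normal form for $\phi$, which converts the rank of $\BE$ into the bound $\mu(I)\leq N-d$ on the number of generators of $I$, and the uniform height estimate $\text{ht}(I)=N-d$ supplied by pure-dimensionality; together these pin $\mu(I)=\text{ht}(I)$ and force a regular sequence. Once lci-ness is established, both the quasi-isomorphism statement and the equality $[X]^{vir}=[X]$ are formal consequences.
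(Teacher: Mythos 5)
Your proposal is correct and follows essentially the same route as the paper: reduce to a local embedding $X\hookrightarrow M$, use the obstruction-theory condition to bound the number of local generators of $I$ by $\dim(M)-d$, combine this with pure-dimensionality (so that $\mathrm{ht}(I)=\dim(M)-d$ everywhere) to force a regular sequence in the Cohen--Macaulay ring $\CO_M$, and then deduce the quasi-isomorphism by a rank count and the identity $[X]^{vir}=[X]$ from the standard lci computation. The only cosmetic difference is that you extract the generator bound from the local normal form $E^0=\Omega_M|_X$ with $E^{-1}\twoheadrightarrow I/I^2$, whereas the paper works with an arbitrary presentation and instead tensors the exact sequence $E^{-1}\to E^0\oplus I/I^2\to\Omega_M|_X\to 0$ with the residue field -- which avoids having to justify that the normal form can always be arranged.
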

	\begin{proof}
		Since this is a local question we may assume that $\BE = [E^{-1}\to E^0]$ for $E^i$ locally free and that the morphism $\BE\to \BL_X$ results from a commutative diagram
		\begin{center}
			\begin{tikzcd}
				E^{-1}\rar["\phi"]\dar & E^0\dar\\
				I/I^2 \rar["\delta"] & \Omega_M|_X
			\end{tikzcd}
		\end{center}
		where we used $\tau_{\geq -1}\BL_X = [I/I^2\to \Omega_M|_X]$ for $I$ the ideal sheaf cutting out a closed embedding $X\hookrightarrow M$ into $M$ non-singular.
		Recall that $\BE\to \tau_{\geq -1}\BL_X$ is an obstruction theory if and only if 
		\[
			E^{-1}\to E^0\oplus I/I^2\to \Omega_M|_X \to 0
		\]
		is exact. After tensoring with the residue field $k(x)$ for some arbitrary $x\in X$ we therefore obtain
		\[
			\dim \left( I \otimes k(x)\right) = \dim \left( I/I^2 \otimes k(x)\right) \leq \dim(M)-d
		\]
		meaning that $I$ is locally generated by at most (indeed exactly) $\dim(M)-d$ elements which must form a regular sequence. Therefore $X$ is a local complete intersection and $I/I^2$ must be a vector bundle of rank $\dim(M)-d$. It then follows from rank considerations that
		\[
		0\to E^{-1}\to E^0\oplus I/I^2\to \Omega_M|_X \to 0
		\]
		is exact i.e. $\BE\to \tau_{\geq -1}\BL_X=\BL_X$ is a quasi-isomorphism. Finally, \cite[Proposition 5.3]{BehrendFantechi} and the example following it give us $[X]^{vir} = [X]$.
	\end{proof}
	\section{The main computation}\label{sect: main}
	\subsection{Intersection theory of symmetric products of curves}\label{sect: IntTheorySymm}
	We now want to compute the integrals appearing in Proposition \ref{prop: localization}. In order to do that we first need to recall the intersection theory of the symmetric product of a connected smooth projective curve $C$ of genus $g$ as outlined for example in \cite{curvebook}.\\
	For any $m>0$ and fixed $c\in C$ we have an embedding 
	\begin{align*}
		\iota\colon C^{(m-1)} &\hookrightarrow C^{(m)}\\
		D &\longmapsto D+c
	\end{align*}
	of a divisor with cohomology class $u\in H^2(C^{(m)})$. Note that $u$ does not depend on the choice of $c$ as $C$ is connected. More generally, for any $n\geq 0$ the subvariety 
	\begin{align*}
		\iota_{n}\colon C^{(m-n)} &\hookrightarrow C^{(m)}\\
		D &\longmapsto D+m\cdot c
	\end{align*}
	represents $u^n\in H^{2n}(C^{(m)})$, which in particular implies $\int_{C^{(m)}} u^n = \delta_{m,n}$. Furthermore, we have the Abel-Jacobi maps 
	\begin{align*}
		\text{AJ}_m\colon C^{(m)} &\longrightarrow \text{Pic}^0(C)\\ 
		D &\longmapsto \CO_C (D-m\cdot c)
	\end{align*}
	to the Jacobian of $C$, which are compatible with the $\iota_n$. It can be shown that this induces an isomorphism on $H^1$ and so $H^1(C^{(m)}) = H^1(\Pic^0(C)) = H^1(C)$. Recall that $H^1(C)$ has a symplectic base $\alpha_1,\ldots, \alpha_g,\beta_1,\ldots,\beta_g\in H^1(C)$ i.e. so that $\int_{C} \alpha_i \beta_j = \delta_{i,j}$ and $\int_{C} \alpha_i\alpha_j = \int_{C} \beta_i\beta_j = 0$ for all $1\leq i,j\leq g$. By abuse of notation we will also denote the pullback of this basis by $\alpha_i,\beta_i$ in $H^1(\Pic^0(C))$ and in $H^1(C^{(m)})$. Furthermore, we will denote by $\theta\in H^2(\Pic^0(C))$ the theta divisor \cite[Section I.4]{curvebook} as well as its pullback along $\text{AJ}_m$ in $H^2(C^{(m)})$. More explicitly, we have
	\[
	\theta = \sum_{i=1}^g \alpha_i \beta_i.
	\]
	We can now state Poincaré's formula \cite[Section I.5]{curvebook} which says
	\[
	\left(\text{AJ}_m\right)_*(1) = \begin{cases}
		\frac{\theta^{g-m}}{(g-m)!}, \text{ if } m\leq g\\
		0, \text{ otherwise}
	\end{cases}
	\]
	in particular, one can rewrite this without case distinction as 
	\[
	\left(\text{AJ}_m\right)_*(1) = \sum_{\substack{I\subset \{1,\ldots,g\}\\\lvert I \rvert = g-m}} \prod_{i\in I} \alpha_i\beta_i.
	\]
	The following Lemma will be very useful for computations later on:
	\begin{lemma}
		For any $m,n\geq 0$ and $I\subset \{1,\ldots,g\}$ we have
		\begin{align*}
			\int_{C^{(m)}} u^n \prod_{i\in I}\alpha_i\beta_i = \delta_{\lvert I\rvert+n,m}
		\end{align*}
		and for any two distinct subsets $I,J\subset\{1,\ldots,g\}$
		\begin{align*}
			\int_{C^{(m)}} u^n \prod_{i\in I}\alpha_i\prod_{j\in J}\beta_j = 0.
		\end{align*}
	\end{lemma}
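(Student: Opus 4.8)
The plan is to push everything forward to the Jacobian $\Pic^0(C)$ along the Abel--Jacobi map and then exploit that $H^*(\Pic^0(C))$ is the exterior algebra on the $\alpha_i,\beta_i$. Since the classes $\alpha_i,\beta_i$ on $C^{(m)}$ are by definition the pullbacks of the corresponding classes on $\Pic^0(C)$ along $\AJ_m$, every product appearing in the two integrals is such a pullback, so the projection formula gives
\[
\int_{C^{(m)}} u^n\prod_{i\in I}\alpha_i\beta_i = \int_{\Pic^0(C)} (\AJ_m)_*(u^n)\cdot\prod_{i\in I}\alpha_i\beta_i,
\]
and likewise for the second integral with $\prod_{i\in I}\alpha_i\prod_{j\in J}\beta_j$ in place of $\prod_{i\in I}\alpha_i\beta_i$. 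Thus the first step is to compute the single pushforward $(\AJ_m)_*(u^n)$.

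For this I would use that $u^n$ is represented by the subvariety $\iota_n(C^{(m-n)})\subset C^{(m)}$ together with the compatibility $\AJ_m\circ\iota_n=\AJ_{m-n}$ (directly, $\AJ_m(D+nc)=\CO_C(D-(m-n)c)=\AJ_{m-n}(D)$). These give $(\AJ_m)_*(u^n)=(\AJ_m)_*(\iota_n)_*(1)=(\AJ_{m-n})_*(1)$, and Poincaré's formula then yields
\[
(\AJ_m)_*(u^n) = \sum_{\substack{K\subset\{1,\ldots,g\}\\ |K| = g-m+n}} \prod_{k\in K}\alpha_k\beta_k,
\]
understood to vanish when $m-n\notin[0,g]$, which is consistent with $u^n=0$ for $n>m$.

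The final step is to evaluate the remaining integrals over $\Pic^0(C)$ using the exterior-algebra structure: a monomial in the $\alpha_k,\beta_k$ of top degree $2g$ integrates to $1$ if it uses each index exactly once as an $\alpha$ and once as a $\beta$ (so that it is a reordering of $\prod_{k=1}^g\alpha_k\beta_k$, with no sign since the degree-$2$ blocks $\alpha_k\beta_k$ commute) and integrates to $0$ otherwise, any repeated index producing a factor $\alpha_k^2=0$ or $\beta_k^2=0$. For the first identity, substituting the pushforward reduces the claim to counting the $K$ with $|K|=g-m+n$ satisfying $K\sqcup I=\{1,\ldots,g\}$; this forces $|I|=m-n$ and leaves the unique choice $K=\{1,\ldots,g\}\setminus I$, producing $\delta_{|I|+n,m}$. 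For the second identity one instead needs $K\sqcup I = K\sqcup J = \{1,\ldots,g\}$ simultaneously, which forces $I=J=K^{c}$; since $I\neq J$ by hypothesis, no term survives and the integral is $0$.

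There is no serious obstacle here. The only points needing care are the normalization $\int_{\Pic^0(C)}\prod_{k=1}^g\alpha_k\beta_k=1$ (which is the case $m=0$ of Poincaré's formula, identifying the point class of $\Pic^0(C)$ with $\prod_k\alpha_k\beta_k$) and the bookkeeping of the disjoint-union constraints. Conveniently, the commutativity of the even-degree factors $\alpha_k\beta_k$ removes all sign issues from the first identity, while the second identity only asserts vanishing, so signs are irrelevant there as well.
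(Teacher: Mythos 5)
Your proof is correct and follows essentially the same route as the paper's: push forward along the Abel--Jacobi map via the projection formula, use the compatibility of $\iota_n$ with $\AJ_m$ together with Poincar\'e's formula to compute $(\AJ_m)_*(u^n)$, and then evaluate on $\Pic^0(C)$ using the exterior-algebra structure. You spell out a few more details (the normalization and the sign bookkeeping) than the paper does, but the argument is the same.
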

	\begin{proof}
		For the first integral note that the second factor is pulled back under the Abel-Jacobi map, which commutes with $\iota_n\colon C^{(m-n)}\subset C^{(m)}$. Using the projection formula we can rewrite the integral as
		\begin{align*}
			\begin{split}
			\int_{C^{(m)}} u^n \prod_{i\in I}\alpha_i\beta_i &= \int_{\Pic^0(C)}\left( \text{AJ}_{m-n}\right)_*(1) \prod_{i\in I}\alpha_i\beta_i \\
			&= \sum_{\substack{J\subset \{1,\ldots,g\}\\\lvert J \rvert = g-m+n}} \int_{\Pic^0(C)} \prod_{j\in J} \alpha_j\beta_j \prod_{i\in I}\alpha_i\beta_i
			\end{split}
		\end{align*}
		where we used Poincaré's formula in the last equality. Now the only summand that can contribute is the one corresponding to $J = I^c$, which occurs in the sum only if $m-n=\lvert I\rvert$, proving the first claim. The second claim follows by a similar argument.
	\end{proof}
	\begin{remark}\label{rem: replace_by_x}
		This means that whenever we are computing an integral on $C^{(m)}$ we can replace the expression $\prod_{i\in I}\alpha_i\beta_i$ by $u^{\lvert I \rvert}$ without changing the value of the integral.
	\end{remark}
	Now denote by $\CD\subset C\times C^{(m)}$ the universal divisor of $C^{(m)}$. As shown in \cite[p.354]{curvebook} one can write it in cohomology as  
	\begin{equation}\label{eqn: Duniv}
		\CD = m\cdot \pt_C + u + \gamma
	\end{equation}
	with $\pt_C$ and $u$ implicitly pulled back from $C$ and $C^{(m)}$ respectively while $\gamma$ is defined as
	\[
	\gamma\coloneqq \sum_{i=1}^g \left(\beta_i\times\alpha_i - \alpha_i\times\beta_i\right)\in H^2(C\times C^{(m)}).
	\]
	It is easy to check that $\gamma^2 = -2\pt_C\cdot\theta$ and $\pt_C\cdot\gamma = \gamma^3 = 0$.
	Denoting for any K-theory class $\CF\in K^0(X)$ on a scheme $X$:
	\[
	c_T(\CF)\coloneqq \sum_{i\geq 0} c_i(\CF) \cdot T^{\text{rk}(\gamma)-i} \in H^*(X)[ T,T^{-1}]
	\]
	we now have:
	\begin{lemma}\label{usefulcohclass}
		For any tuple $\mathbf{m} = (m_i)_{i=1}^n$ of nonnegative integers, $a_1,\ldots,a_n\in \BZ$ and $L$ a line bundle of degree $l$ on $C$:
		\begin{align}\label{eqn: c(Rpi O(D))}
			\begin{split}
			&c_T\left(R\pi_*\left(\CO_{C\times C^{(\bmm)}}\left(\sum_{i=1}^n a_i \CD_i\right) \otimes L\right)\right) \\
			&=\exp\left(-\frac{\sum_{k,l=1}^n a_k a_l \theta_{k,l}}{T+\sum_{i=1}^n a_i u_i}\right)\cdot\left(T+\sum_{i=1}^n a_i u_i\right)^{1-g+l+\sum_{i=1}^n a_i m_i}
			\end{split}
		\end{align}
		where $\pi\colon C\times C^{(\bmm)}\to C^{(\bmm)}$ is the projection to the second factor, $\CD_i\subset C\times C^{(\bmm)}$ is the universal divisor divisor of the $i$-th factor, $\theta_{k,l}$ is defined as
		\[
		\theta_{k,l} = \sum_{i=1}^g \alpha_i^k \times\beta_i^l
		\]
		and $\gamma^k$ is understood to be pulled back from the projection $C^{(\bmm)} = \prod_{i=1}^n C^{(m_i)}\to C^{(m_k)}$ to the $k$-th factor.
	\end{lemma}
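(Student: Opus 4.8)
The plan is to compute the Chern character of $R\pi_*\mathcal{L}$ by Grothendieck--Riemann--Roch, where $\mathcal{L}\coloneqq\CO_{C\times C^{(\bmm)}}(\sum_i a_i\CD_i)\otimes L$, and then pass from $\ch$ to the homogenized total Chern class $c_T$ by the universal power-sum identity. By \eqref{eqn: Duniv} applied to each factor we have $\CD_i = m_i\pt_C + u_i + \gamma_i$, where $u_i,\gamma_i$ are pulled back from the $i$-th symmetric product $C^{(m_i)}$, so that
\[ c_1(\mathcal{L}) = \Bigl(\sum_i a_im_i + l\Bigr)\pt_C + \sum_i a_iu_i + \sum_i a_i\gamma_i \eqqcolon P + U + \Gamma. \]
I abbreviate $N\coloneqq \sum_i a_im_i + l$, so $P = N\pt_C$, and record the virtual rank $r \coloneqq N+1-g = 1-g+l+\sum_i a_im_i$ of $R\pi_*\mathcal{L}$, which is constant since $\mathcal{L}$ has fiberwise degree $N$.

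The first and main step is the cohomology bookkeeping for the classes $\gamma_i$. From $\pt_C\cdot\gamma = \gamma^3 = 0$ in the single-factor case one immediately gets $P^2 = P\Gamma = 0$. The essential input is the mixed product $\gamma_k\gamma_l$: expanding $\gamma_k = \sum_i(\beta_i\times\alpha_i^{(k)}-\alpha_i\times\beta_i^{(k)})$ and $\gamma_l$ likewise and multiplying, each cross-product term picks up the Koszul sign $(-1)$ from commuting the two odd $C$-factors past one another, and the only surviving contributions are those pairing $\alpha_i\beta_j = \delta_{ij}\pt_C$ and $\beta_i\alpha_j = -\delta_{ij}\pt_C$ on $C$; collecting them yields
\[ \gamma_k\gamma_l = -\pt_C\cdot(\theta_{k,l}+\theta_{l,k}). \]
Summing against $a_ka_l$ and using the symmetry of the double sum gives $\Gamma^2 = -2\pt_C\cdot\Theta$ with $\Theta\coloneqq\sum_{k,l}a_ka_l\theta_{k,l}$; since $\Gamma^2$ is a multiple of $\pt_C$, we also get $\Gamma^3 = 0$ and $\pt_C\Gamma^2 = 0$. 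This sign computation is where all the care is needed; that it reproduces $\gamma^2 = -2\pt_C\theta$ when $k=l$ is a useful sanity check.

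With these relations in hand, $\ch(\mathcal{L}) = e^{P+U+\Gamma} = e^U(1+P)(1+\Gamma+\tfrac12\Gamma^2) = e^U\bigl(1+\Gamma + N\pt_C - \pt_C\Theta\bigr)$, using $P\Gamma = P\Gamma^2 = P^2 = 0$. The relative tangent bundle of $\pi$ is $T_C$ pulled back, so $\td(T_\pi) = 1+(1-g)\pt_C$. Multiplying and pushing forward along $\pi$ --- which returns the coefficient of $\pt_C$ while annihilating $1$ and $\Gamma$ (neither has a $\pt_C$-component) and letting $e^U$ pass through by the projection formula --- gives
\[ \ch(R\pi_*\mathcal{L}) = e^{U}\,(N+1-g-\Theta) = e^{U}\,(r-\Theta). \]
Reading off the degree-$2k$ components yields the power sums $p_k\coloneqq k!\,\ch_k(R\pi_*\mathcal{L}) = rU^k - k\,\Theta\,U^{k-1}$ for $k\ge1$.

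Finally I would convert to $c_T$ through the identity $c_T(\mathcal{F}) = \prod_j(T+x_j) = T^{r}\exp\!\bigl(\sum_{k\ge1}\tfrac{(-1)^{k-1}}{k}p_kT^{-k}\bigr)$, valid for any $K$-theory class of rank $r$ with Chern roots $x_j$ and power sums $p_k=\sum_j x_j^k$. Substituting $p_k = rU^k - k\Theta U^{k-1}$ splits the exponent into two series summing in closed form to $r\log(1+U/T)$ and $-\Theta/(T+U)$ respectively, so that
\[ c_T(R\pi_*\mathcal{L}) = T^{r}\,(1+U/T)^{r}\exp\!\Bigl(-\frac{\Theta}{T+U}\Bigr) = (T+U)^{r}\exp\!\Bigl(-\frac{\Theta}{T+U}\Bigr), \]
which is exactly \eqref{eqn: c(Rpi O(D))}. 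I expect the product computation $\gamma_k\gamma_l$ with its Koszul signs to be the only genuinely delicate point; the Grothendieck--Riemann--Roch step and the final resummation are routine once the quadratic relations among $P$, $U$ and $\Gamma$ are established.
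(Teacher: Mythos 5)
Your proposal is correct and follows essentially the same route as the paper: Grothendieck--Riemann--Roch applied to $c_1(\mathcal{L}) = N\pt_C + U + \Gamma$ together with the relations $\gamma_k\gamma_l = -\pt_C(\theta_{k,l}+\theta_{l,k})$ and $\pt_C\cdot\Gamma = 0$, followed by the standard conversion from Chern character to $c_T$. The only (cosmetic) difference is that the paper first twists by $\CO(-u)$ so that $\ch(\CF\otimes\CO(-u)) = r - \theta$ and then substitutes $T\mapsto T+u$ at the end, whereas you carry the factor $e^{U}$ through and resum the series $\sum_k \tfrac{(-1)^{k-1}}{k} p_k T^{-k}$ directly; both yield the same closed form.
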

	\begin{proof}
		We closely follow the Proof of \cite[Lemma VIII.2.5]{curvebook}. First we write
		\begin{align*}
			\CD = \sum_{i=1}^n a_i \CD_i, \ M = \sum_{i=1}^n a_i m_i, \ u = \sum_{i=1}^n a_i u_i, \ \gamma = \sum_{i=1}^n a_i \gamma_i, \theta = \sum_{i,j=1}^n a_i a_j \theta_{i,j}
		\end{align*}
		and $\CF = R\pi_*\left(\CO_{C\times C^{(\bmm)}}\left(\CD\right) \otimes L\right)$ for short.
		Using Grothendieck-Riemann-Roch we can further compute
		\begin{align*}
			ch(\CF\otimes \CO(-u)) &= \pi_* \left(e^{\CD + l\pt_C-u} \cdot \td_C\right) = \pi_* \left(e^{(M+l)\pt_C+\gamma} \cdot (1+(1-g)\pt_C)\right)\\
			&= \pi_* \left((1+(M+l-\theta)\pt_C + \gamma) \cdot (1+(1-g)\pt_C)\right) =  M+l+1-g-\theta
		\end{align*}
		where we have used $\gamma\cdot \pt_C = 0$, $\gamma^2 = -2 \pt_C \cdot \theta$ and $\gamma^3=0$.
		Recall further that the conversion from chern character to chern class reads
		\[
			\sum_{k\geq 0} c_k(\CF)\cdot t^k = \exp\left(\sum_{k\geq 1}(-1)^{k-1} (k-1)! \ch_k(\CF)\cdot t^k\right)
		\]
		and therefore
		\begin{align*}
			c_T (\CF\otimes \CO(-u)) &= T^{\text{rk} (\CF)}\sum_{k\geq 0} c_k(\CF\otimes \CO(-u)) \cdot T^{-k} \\
			&= T^{M+l+1-g} e^{-\frac{\theta}{T}}
		\end{align*}
		where we have used $\text{rk} (\CF) = M+l+1-g$ which follows from Riemann-Roch.  This finally implies
		\[
		c_T(\CF) = c_{T+u}(\CF\otimes\CO(-u)) = (T+u)^{M+l+1-g} e^{-\frac{\theta}{T+u}}
		\]
		as desired.
	\end{proof}
	\begin{remark}
		Note that one can rewrite \eqref{eqn: c(Rpi O(D))} as
		\[
		\left( T+\sum_i a_i u_i\right)^{1-g+l}\exp\left(-\sum_{k,l=1}^n \theta_{k,l}\frac{\partial f_k(\bu)/\partial u_l}{f_k(\bu)}\right)\prod_{k=1}^n f_k(\bu)^{m_k} 
		\]
		where we treat $u_l$ in
		\[
		f_k(\bu) = \left( T+\sum_i a_i u_i\right)^{a_k}.
		\]
		as a formal variable before taking derivatives and as a cohomology class afterwards.
	\end{remark}
	In order to be able to work with the $\theta_{k,l}$ as above, we will need the following Lemma
	\begin{lemma}\label{lemma: alphabetaintegrals}
		For any tuple $\bmm = (m_i)_{i=1}^n$ of nonnegative integers and tuples $\ba^1 =\left(a_{i}^1\right)_{i=1}^{s_1},\ldots,\ba^g = \left(a_{i}^g\right)_{i=1}^{s_g}, \bb^1 = \left(b_{i}^1\right)_{i=1}^{t_1},\ldots,\bb^g = \left(b_{i}^g\right)_{i=1}^{t_g},\bc^{1}=\left(c_{i}^1\right)_{i=1}^t,\bc^{2}=\left(c^{2}_{i}\right)_{i=1}^t$ of numbers in $\Set{1,\ldots,n}$ we have
		\begin{align*}
			\int_{C^{(\bmm)}}\prod_{i=1}^n u_i^{n_i} \prod_{j=1}^g \left(\prod_{i=1}^{r_j} \alpha_j^{a_{i}^j} \prod_{i=1}^{s_j}\beta_j^{b_{i}^j}\right)\prod_{i=1}^t\theta_{c_i^{1},c_i^{2}}\prod_{i_1,i_2=1}^n e^{z_{i_1,i_2}\theta_{i_1,i_2}} = 0\\
		\end{align*}
		unless $s_l=t_l$ for all $l$ in which case the integral equals
		\begin{align*}
			(-1)^{\sum_{i=1}^g r_i\cdot (r_i-1)/2} \sum_{\coprod_{i=1}^g S_i = \Set{1,\ldots,t}}[\mathbf{u}^{\bmm}]\prod_{i=1}^n u_i^{n_i+g} \detty{M}^g \prod_{i=1}^g  \detty{(M^{-1})_{\ba^i\sqcup \bc_{S_i}^{1},\bb^i\sqcup \bc_{S_i}^{2}}}			
		\end{align*}
		where $\sqcup$ denotes the concatenation of tuples and for any set $S = \Set{s_1<\ldots<s_l}$ we defined $\bc_S^j \coloneqq (c^j_{s_i})_{i=1}^l$. Furthermore we set $M = \left(\delta_{i,j}/u_i+z_{j,i}\right)_{1\leq i,j\leq n}$ and for any matrix $N = (n_{i,j})_{i,j}$ and tuples of indices $\mathbf{a} = (a_l)_l$, $\mathbf{b} = (b_l)_l$ we denote $N_{\mathbf{a},\mathbf{b}}\coloneqq \left(n_{a_i,b_j} \right)_{i,j}$.
	\end{lemma}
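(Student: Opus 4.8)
The plan is to expand the integrand into Künneth-pure monomials in the generators $u_k,\alpha_j^k,\beta_j^k$ and to integrate over $C^{(\bmm)}=\prod_k C^{(m_k)}$ factor by factor with the preceding integration Lemma, packaging the resulting combinatorics into the claimed determinants. I would first use $\theta_{k,l}=\sum_{j=1}^g\alpha_j^k\beta_j^l$ to rewrite $\prod_{i=1}^t\theta_{c_i^1,c_i^2}$ as a sum over colourings $i\mapsto j_i\in\{1,\dots,g\}$ of the $\theta$-factors; collecting the factors of a common colour $j$ is exactly the datum of the partition $\coprod_{j=1}^g S_j=\{1,\dots,t\}$ in the statement, the block $S_j$ contributing $\alpha_j$-legs at the positions $\bc^1_{S_j}$ and $\beta_j$-legs at $\bc^2_{S_j}$. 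Similarly $\exp(\sum_{k,l}z_{k,l}\theta_{k,l})=\prod_{j=1}^g\exp(\sum_{k,l}z_{k,l}\alpha_j^k\beta_j^l)$ factors over the symplectic index $j$, since summands with distinct $j$ are even and commute. After this expansion each monomial is a tensor product of classes supported on the individual factors $C^{(m_k)}$, so $\int_{C^{(\bmm)}}$ splits into a product of single-factor integrals.

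Next I would use that the Jacobian cohomology is $\bigotimes_{j=1}^g\Lambda(\alpha_j,\beta_j)$ and that, by the preceding Lemma, a single factor $C^{(m_k)}$ contributes nontrivially only when its Jacobian part is a product $\prod_{j\in I_k}\alpha_j^k\beta_j^k$ of matched pairs, in which case $\int_{C^{(m_k)}}u_k^{p}\prod_{j\in I_k}\alpha_j^k\beta_j^k=\delta_{|I_k|+p,m_k}$. This forces, separately for each $j$, every inserted $\alpha_j^k$ to be matched by a $\beta_j^k$ on the same factor, so the whole integral factorises as a product over $j=1,\dots,g$ of independent contributions, times the $u$-bookkeeping. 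Since both the exponential coupling and every $\theta$-leg contribute $\alpha_j$ and $\beta_j$ in equal numbers, the ends can be matched only when the external legs are balanced, $s_l=t_l$; this is the asserted vanishing, and the same constraint makes each minor $(M^{-1})_{\ba^j\sqcup\bc^1_{S_j},\,\bb^j\sqcup\bc^2_{S_j}}$ square.

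The technical core is the contribution of a fixed symplectic index $j$. Here the odd classes $\alpha_j^k$ and $\beta_j^k$ play the role of two dual families of Grassmann variables: the coupling $\exp(\sum_{k,l}z_{k,l}\alpha_j^k\beta_j^l)$ together with the self-pairings of external legs turns the signed sum over all factorwise matchings of $\alpha_j$- and $\beta_j$-ends into a fermionic Gaussian correlator. Treating the $u_k$ as formal variables, so that a matched pair on factor $k$ carries a diagonal weight $1/u_k$ as dictated by Remark \ref{rem: replace_by_x}, I expect the standard Grassmann integration identity to evaluate this contribution as $\detty{M}\cdot\detty{(M^{-1})_{\ba^j\sqcup\bc^1_{S_j},\,\bb^j\sqcup\bc^2_{S_j}}}$ for $M=(\delta_{i,j}/u_i+z_{j,i})$, the factor $\detty{M}$ being the partition function and the minor of $M^{-1}$ recording the external legs by Wick's theorem (the transpose in $z_{j,i}$ being fixed by the convention in $\theta_{k,l}$). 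Taking the product over $j=1,\dots,g$ gives $\detty{M}^g\prod_j\detty{(M^{-1})_{\dots}}$; the $g$ copies of $\detty{M}$ each carry a leading $\prod_i u_i^{-1}$, which together with the explicit $u_i^{n_i}$ accounts for the prefactor $\prod_i u_i^{n_i+g}$ inside the coefficient extraction $[\bu^\bmm]$ that the single-factor formula $\int u_k^p\prod\alpha\beta=\delta$ produces. The sign $(-1)^{\sum_i s_i(s_i-1)/2}$ finally arises from reordering the $s_i$ odd classes $\alpha_j^{a_i^j}$ into the order defining the minors.

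I expect the main obstacle to be making this single-index Gaussian computation rigorous in the cohomological setting: one has to verify that the coupling $(z_{k,l})$ and the diagonal $u$-bookkeeping assemble precisely into $M$ and $M^{-1}$, and to control all anticommutation signs in the matching expansion so that the signed sum collapses to a determinant with the stated global sign. This is exactly the combinatorial bookkeeping I would defer to the Lemmas of Section \ref{sect: combi}.
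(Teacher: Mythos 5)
Your proposal is correct in outline and arrives at the right formula, but the engine you use for the technical core differs from the paper's. The paper first treats the case $t=0$: after factoring over the symplectic index (as you do) it reads each $z$-monomial as a directed graph on $\{1,\dots,n\}$, observes that nilpotency of the odd classes forces the surviving configurations to be exactly the \emph{self-avoiding flows} (vertex-disjoint cycles plus vertex-disjoint paths from $\ba$ to $\bb$), and then invokes Talaska's theorem to collapse the signed sum into $\detty{\widetilde{M}}\cdot\detty{(\widetilde{M}^{-1})_{\ba,\bb}}$ with $\widetilde{M}=(\delta_{i,j}+u_iz_{j,i})$; the explicit $\theta_{c_i^1,c_i^2}$ insertions are then produced afterwards by differentiating in $z_{i_1,i_2}$ and applying Jacobi's identity for minors of the inverse. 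Your route replaces Talaska's theorem by Wick's theorem for a fermionic Gaussian, which is the same identity in different clothing (the Grassmann nilpotency enforces the self-avoidance that Talaska's combinatorics encodes), and it has the advantage of treating the $\theta$-insertions on the same footing as the external legs via the colouring expansion, so no separate differentiation/Jacobi step is needed. The price is the one you identify yourself: you must set up rigorously the Berezin-integral model of the cohomological rule $\int_{C^{(m)}}u^n\prod_{i\in I}\alpha_i\beta_i=\delta_{|I|+n,m}$, i.e.\ realise it as $[u^m]$ of $u^g$ times a Gaussian with quadratic form $\sum_i\beta_i\alpha_i/u$ --- this is precisely where the diagonal $\delta_{i,j}/u_i$ of $M$ and the shift $n_i\mapsto n_i+g$ come from. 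Two small imprecisions to fix when writing this out: a matched pair $\alpha_j^k\beta_j^k$ is replaced by $u_k$ (not weighted by $1/u_k$; the $1/u_k$ lives on the diagonal of the quadratic form, i.e.\ it is the propagator normalisation, and the two statements only agree after the $u^g$ rescaling), and the global sign is $(-1)^{\sum_i r_i(r_i-1)/2}$ coming from reordering the $r_i$ classes $\alpha_j^{a_i^j}$ past each other, not $\sum_i s_i(s_i-1)/2$.
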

	\begin{proof}
		Let us first examine the case $t=0$. In Remark \ref{rem: replace_by_x} we noted how to compute integrals of the above kind.
		First, we have to express the product of classes pulled back from $\prod_{i=1}^n \Pic^0(C)$ in terms of monomials in the $\alpha_l^i,\beta_l^i$. Then we delete all unbalanced monomials and replace all balanced ones by the appropriate powers of $u_i$. Furthermore, we can write
		\begin{align}\label{eqn: helpprod}
			\prod_{l=1}^g \left(\prod_{i=1}^{r_l} \alpha_l^{a_{l,i}} \prod_{i=1}^{s_l}\beta_l^{b_{l,i}}\right)\prod_{i_1,i_2=1}^n e^{z_{i_1,i_2}\theta_{i_1,i_2}}= \prod_{l=1}^g \left(\prod_{i=1}^{r_l} \alpha_l^{a_{l,i}} \prod_{i=1}^{s_l}\beta_l^{b_{l,i}}\prod_{i_1,i_2=1}^n e^{z_{i_1,i_2}\alpha_l^{i_1}\beta_l^{i_2}} \right)
		\end{align}
		and the different factors of the outer product do not influence each other during this process. We can therefore assume $g=1$. Any $z$-monomial in the above product now corresponds to a directed graph on the vertices $1,\ldots,n$ as the occurrence of a $z_{i_1,i_2}\alpha^{i_1}\beta^{i_2}$ can be viewed as an edge from $i_2$ to $i_1$. The balancing condition is equivalent to the graph consisting of two parts:
		\begin{enumerate}
			\item a set of vertex-disjoint non-repeating directed cycles $\mathbf{C}$ on the vertices not in $\ba\cup \bb$
			\item a set of vertex-disjoint non-self-intersecting directed paths $\mathbf{P}$ which start at the entries in $\ba$ and end at the entries of $\bb$. Furthermore, we have $r=s$ and therefore there must be a permutation $\sigma\in S_{r}$ so that the path starting at $a_i$ ends at $b_{\sigma(i)}$ or $a_i=b_{\sigma(i)}$.
		\end{enumerate}
		Moreover, both sets are not allowed to have common vertices.
		The pair $\mathbf{F} = (\mathbf{P},\mathbf{C})$ is precisely what is referred to in \cite[Definition 2.3]{Talaska} as a \textit{self-avoiding flow} on the complete directed graph on $\Set{1,\ldots,n}$ connecting $\ba$ to $\bb$. By weighting each edge $e = (i\to j)$ by $\wt(e) = -u_i z_{j,i}$ and taking into account the signs produced by rearranging the $\alpha$'s and $\beta$'s, we can therefore replace \eqref{eqn: helpprod} by 
		\[
		(-1)^{r (r-1)/2}\cdot\sum_{\substack{(\mathbf{F},\sigma)\\ \text{as above}}} \sgn(\mathbf{F})\cdot \prod_{e\in E(\mathbf{F})} \wt(e)
		\]
		where $\sgn(\mathbf{F})=\sgn(\mathbf{P})\cdot \sgn(\mathbf{C})$ with $\sgn(\mathbf{P})=\sgn(\sigma)$ and $\sgn(\mathbf{C})=(-1)^{\lvert \mathbf{C}\rvert}$ and $(-1)^{r (r-1)/2}$ the sign that arises out of permuting the factors of $\prod_{i=1}^r \left(\alpha^{a_i}\beta^{b_i}\right)$ back into the order in which they appear in \eqref{eqn: helpprod}. By \cite[Theorem 2.5]{Talaska} this is equal to 
		\[ 
		(-1)^{r (r-1)/2}\cdot \left(\sum_{\mathbf{C}} \sgn(\mathbf{C})\prod_{e\in E(\mathbf{C})}\wt(e)\right)\cdot \detty{\left(\widetilde{M}^{-1}\right)_{\ba,\bb}}
		\]
		where $\widetilde{M} = \left(\delta_{i,j}+u_i z_{j,i}\right)_{1\leq i,j\leq n}$. Moreover, the second factor is easily seen to be $\detty{\widetilde{M}} = \prod_{i=1}^n u_i \cdot \detty{M}$ and so our expression is
		\[
		(-1)^{r (r-1)/2}\cdot\prod_{i=1}^n u_i \cdot \begin{Vmatrix} M \end{Vmatrix}\cdot \detty{\left(M^{-1}\right)_{\ba,\bb}}
		\]
		which establishes the claim in the case $t=0$. For the general claim one simply takes derivatives in $z_{i_1,i_2}$. For this one uses Jacobi's identity \cite[Lemma A.1(e)]{datweirduselesspaper} which says that for any tuples $\ba = (a_1<\ldots <a_s),\bb=(b_1<\ldots<b_s)$ of elements in $\Set{1,\ldots,n}$:
		\[
			\detty{M} \cdot \detty{\left(M^{-1}\right)_{\ba,\bb}} = (-1)^{\sum_{i=1}^r (a_i+b_i)}\detty{M_{\bb^c,\ba^c}}
		\]
		and therefore
		\[
		\frac{\partial}{\partial z_{i_1,i_2}}\left(\detty{M} \cdot \detty{\left(M^{-1}\right)_{\ba,\bb}}\right) = \begin{cases}
			\detty{M}  \cdot \detty{\left(M^{-1}\right)_{\ba\sqcup (i_1),\bb\sqcup (i_2)}},&\text{ if } i_1\not\in \ba \text{ and } i_2\not\in \bb \\
			0, &\text{ else}
		\end{cases}
		\]
		which holds even when the entries of $\ba$ and $\bb$ are not in increasing order.
	\end{proof}
	\subsection{Proof of Theorems \ref{thm: main}, \ref{thm: structurethm}\eqref{conj: rationality},\eqref{conj: powerminusone} and  \ref{thm: eigenvalues}}\label{sect: Proof of main}
	We now use the results of Section \ref{sect: IntTheorySymm} to derive Theorem \ref{thm: main} from Proposition \ref{prop: localization}. In addition to Theorem \ref{thm: main} we will show the following reformulation which will be more useful when proving Theorem \ref{thm: structurethm}\eqref{conj: poles} later on.
	\begin{theorem}\label{thm: mainthmproof}
		We have 
		\begin{align*}
			\langle \ch_{z_1}(\gamma_1) \ldots \ch_{z_n}(\gamma_n) \rangle_{d}^X = p^{d(1-g)}\sum_{\lambda\vdash d} \eval{\langle \ch_{z_1}(\gamma_1) \ldots \ch_{z_n}(\gamma_n) \rangle_{\lambda}^{X}}_{p_\Box = p}
		\end{align*}
			where $\langle\dots\rangle^X_\lambda\in\QQ(t_1,t_2)[[\bp]]$ is the power series in $\bp = (p_\Box)_{\Box\in\lambda}$ determined by super-commutativity and
		\begin{align}\label{eqn: P_lambda def}
			\begin{split}
			&\left\langle \prod_{i=1}^a\ch_{x_i}(1) \cdot\prod_{i=1}^b\ch_{y_i}(\pt)\cdot \prod_{l=1}^g \Big(\ch_{z_1^l}(\alpha_l)\ch_{w_1^l}(\beta_l)\ldots\ch_{z_{c_l}^l}(\alpha_l)\ch_{w_{c_l}^l}(\beta_l)\Big) \right\rangle_\lambda^X \\
			&\coloneqq \prod_{i=1}^a x_i\cdot\sum_{\substack{\coprod_{i=-1}^g S_i = \Set{1,\ldots,a}\\ \Box_i\in\lambda\text{ for }i\in S_{-1}}}\prod_{i\in S_-1}\left(p_{\Box_i}\frac{\partial}{\partial p_{\Box_i}}\right) \prod_{i\in S_{-1}} E(x_i,\B^\lambda_{\Box_i}) \\ &\cdot \prod_{i\in S_0} \Big(l_1\mathfrak{B}(x_i t_1)+l_2\mathfrak{B}(x_i t_2)\Big)E(x_i,\Bf^\lambda)
			\cdot\prod_{i=1}^b E(y_i,\Bf^\lambda)\\
			&\cdot \prod_{i=1}^g \left(\bz^i,\bw^i;\bx_{S_i}\mid \Bf^\lambda\right)_{\widetilde{M}(\bp)}\cdot A(\Bf^\lambda)^{g-1}\cdot B_1(\Bf^\lambda)^{l_1} \cdot B_2(\Bf^\lambda)^{l_2}
			\end{split}
		\end{align}
		where \[
		\widetilde{M}(\bp) = \left(p_\Box\frac{\partial \B^\lambda_{\Box'}(\bp) }{\partial p_\Box}\right)_{\Box,\Box'\in\lambda}\] and $\Bf^\lambda = \left(\B^\lambda_{\Box}(\bp)\right)_{\Box\in\lambda}$ is as in Theorem \ref{thm: multivariablebetheunique}.
	\end{theorem}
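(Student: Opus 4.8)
The proof begins from Proposition \ref{prop: localization}, which already expresses
\[
\langle\gamma\rangle^{X}_d=p^{d(1-g)}\sum_{\lambda\vdash d}\sum_{\bmm}p^{\lVert\bmm\rVert-n(\lambda)l_1-n(\bar\lambda)l_2}\int_{C^{(\bmm)}}\frac{\widetilde\gamma}{e(N_\bmm)}.
\]
I would prove the finer statement in which, for each fixed $\lambda$, the weight $p^{\lVert\bmm\rVert}$ is refined to $\prod_{\lambda/\mu}\bigl(\prod_{\Box\in\lambda/\mu}p_\Box\bigr)^{m_{\lambda/\mu}}$; specialising $p_\Box=p$ then recovers the stated theorem. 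The three tasks are to evaluate the single integral over $C^{(\bmm)}$ in closed form, to perform the sum over $\bmm$, and to recognise the answer as \eqref{eqn: P_lambda def}.

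To evaluate the integrand I would expand the descendents through the Grothendieck--Riemann--Roch formula \eqref{eqn: GRRondescendents}. Writing $\CD_{i,j}=\sum_{\lambda/\mu\ni(i,j)}\CD_{\lambda/\mu}$ and substituting the cohomological form \eqref{eqn: Duniv}, the exponential $\ch_z(\widetilde\BF)$ contributes for each box the weight $e^{-z(it_1+jt_2)}$ dressed by the $u$- and $\gamma$-classes; pushing forward along $\pi$ (extracting the $\pt_C$-coefficient), together with the prefactor $(1-e^{-t_1z})(1-e^{-t_2z})/t_1t_2$, assembles exactly the blocks $E(z,\B_\Box)$, the Bernoulli correction $\mathfrak B$, and --- through the divisor degree $n_{i,j}$ entering as a coefficient --- a logarithmic-derivative term. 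Insertions $\ch(\pt)$ produce $E(y,\Bf)$, while $\ch(\alpha_l),\ch(\beta_l)$ feed $\alpha_l,\beta_l$-classes into the $\gamma$-part of $\CD_{i,j}$; the set partition $\coprod S_i$ in \eqref{eqn: P_lambda def} records how each $\ch(1)$-insertion's $\pt_C$-term is distributed among these possibilities. Simultaneously, Lemma \ref{usefulcohclass} computes $e(N_\bmm)$: each $R\pi_*$-summand has Euler class $(T+u)^{1-g+l+\cdots}\exp(-\theta\text{-terms}/(T+u))$ with $T$ its $T$-weight, and under the identification $\B_{(i,j)}\leftrightarrow -it_1-jt_2+\sum_{\lambda/\mu\ni(i,j)}u_{\lambda/\mu}$ the bases $T+u$ are precisely the factors $\B_i$, $t_1+t_2-\B_i$ and $at_1+bt_2+\B_i-\B_j$ occurring in $A,B_1,B_2$ of \eqref{eqn: ABB}.

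With the integrand written as a product of $(T+u)$-powers, $\theta$-exponentials and descendent classes, Lemma \ref{lemma: alphabetaintegrals} collapses the $\alpha,\beta,\theta$-data to determinants of the matrix $M=(\delta_{i,j}/u_i+z_{j,i})$ and to a coefficient extraction $[\bu^\bmm]$ of a single, $\bmm$-independent generating series in the formal variables $u_{\lambda/\mu}$. The refined sum over $\bmm$ is then the elementary identity $\sum_{\bmm}\prod_{\lambda/\mu}\bigl(\prod_{\Box\in\lambda/\mu}p_\Box\bigr)^{m_{\lambda/\mu}}[\bu^\bmm]G(\bu)=\eval{G(\bu)}_{u_{\lambda/\mu}=\prod_{\Box\in\lambda/\mu}p_\Box}$ of formal power series, so that the weighted sum is the substitution $u_{\lambda/\mu}=\prod_{\Box\in\lambda/\mu}p_\Box$. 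Under this substitution $-it_1-jt_2+\sum_{\lambda/\mu\ni(i,j)}u_{\lambda/\mu}$ becomes the formal value $\B^{(\lambda)}_{(i,j)}$ of \eqref{eqn: B(k) defn}, and the coefficients $n_{i,j}$ turn into the operators $p_\Box\partial/\partial p_\Box$; invoking the multivariate characterisation of the Bethe roots (Theorem \ref{thm: multivariablebetheunique}) to pass from $\B^{(\lambda)}_\Box$ to the genuine multivariate roots $\B^\lambda_\Box(\bp)$, the assembled expression is exactly \eqref{eqn: P_lambda def}, and specialising $p_\Box=p$ completes the proof.

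The principal difficulty lies in the bookkeeping of the last step: the determinant produced by Lemma \ref{lemma: alphabetaintegrals} is indexed by the \emph{connected skew partitions} of $\lambda$, whereas $\widetilde M(\bp)$ and the minors in $(\,\cdots\,)_{\widetilde M(\bp)}$ are indexed by the \emph{boxes} of $\lambda$. Showing that after $u_{\lambda/\mu}=\prod_\Box p_\Box$ the large connected-skew Jacobian collapses --- via the chain rule $p_\Box\partial/\partial p_\Box=\sum_{\lambda/\mu\ni\Box}u_{\lambda/\mu}\partial/\partial u_{\lambda/\mu}$ relating the two sets of variables --- to the box-indexed Bethe matrix, and that the leftover prefactors reassemble into $A(\Bf^\lambda)^{g-1}B_1(\Bf^\lambda)^{l_1}B_2(\Bf^\lambda)^{l_2}$ (absorbing $p^{-n(\lambda)l_1-n(\bar\lambda)l_2}$), is exactly the work carried out by the combinatorial lemmas of Section \ref{sect: combi} and the multivariate Bethe theory of Section \ref{sect: Bethe}. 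A secondary point is to justify the rearrangements at the level of formal power series in $\bp$, which is legitimate because each $\B^\lambda_\Box(\bp)$ is a well-defined power series with $\B^\lambda_{(i,j)}(\bp)=-it_1-jt_2+\CO(\bp^{>0})$.
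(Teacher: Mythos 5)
Your overall architecture matches the paper's proof: localization via Proposition \ref{prop: localization}, expansion of the descendents through \eqref{eqn: GRRondescendents} and \eqref{eqn: Duniv}, the Euler class via Lemma \ref{usefulcohclass}, integration over $C^{(\bmm)}$ via Lemma \ref{lemma: alphabetaintegrals}, and the final conversion of the skew-partition-indexed matrices to box-indexed Bethe matrices via Lemmas \ref{lemma: bethedet} and \ref{prop: bethematrices}. However, there is a genuine gap at the central step, the summation over $\bmm$. You assert that the integral over $C^{(\bmm)}$ is $[\bu^{\bmm}]G(\bu)$ for a single, $\bmm$-independent generating series $G$, so that the weighted sum over $\bmm$ reduces to the elementary substitution $u_{\lambda/\mu}=\prod_{\Box\in\lambda/\mu}p_\Box$. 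This is false: by Lemma \ref{usefulcohclass} the Euler class $e(N_{\bmm})$ depends on $\bmm$ through the exponents $1-g+l+\sum_i a_i m_i$, so that
\[
\frac{1}{e(N_{\bmm})}=\overline{A}^{g-1}\cdot\overline{B}_1^{l_1}\cdot\overline{B}_2^{l_2}\cdot\prod_{\lambda/\mu}\overline{F}^{\lambda}_{\lambda/\mu}(\Bf)^{-m_{\lambda/\mu}}\cdot h
\]
carries the $\bmm$-dependent factor $\prod_{\lambda/\mu}\overline{F}^{\lambda}_{\lambda/\mu}(\Bf)^{-m_{\lambda/\mu}}$, and this factor cannot be absorbed into a fixed $G$.

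The correct mechanism is multivariate Lagrange inversion \cite[Theorem A]{Goodygoodgood}: the sum $\sum_{\bmm}\overline{\bp}^{\bmm}\,[\bu^{\bmm}]\bigl(\overline{P}_\lambda(\bu)\cdot\prod_{\lambda/\mu}\overline{F}^{\lambda}_{\lambda/\mu}(\Bf)^{-m_{\lambda/\mu}}\cdot\prod_{\lambda/\mu}u_{\lambda/\mu}\cdot\detty{M_{\text{skew}}(\bu)}\bigr)$ equals $\overline{P}_\lambda(\bu(\overline{\bp}))$, where $\bu(\overline{\bp})$ is the compositional inverse of $\overline{p}_{\lambda/\mu}=u_{\lambda/\mu}\cdot\overline{F}^\lambda_{\lambda/\mu}(\overline{\Bf}^\lambda(\bu))$ --- precisely the multivariate Bethe equations of Theorem \ref{thm: multivariablebetheunique}\eqref{cond: recursion_multi_S}. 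The factor $\prod_{\lambda/\mu}u_{\lambda/\mu}\cdot\detty{M_{\text{skew}}(\bu)}$ produced by Lemma \ref{lemma: alphabetaintegrals} is exactly the Jacobian required by the inversion formula, not an incidental determinant to be carried along. Under your substitution the quantities $\B_{(i,j)}$ would remain the polynomials $-it_1-jt_2+\sum_{\lambda/\mu\ni(i,j)}\prod_{\Box\in\lambda/\mu}p_\Box$, and no amount of ``invoking'' Theorem \ref{thm: multivariablebetheunique} afterwards can turn these into the genuine roots $\B^\lambda_{(i,j)}(\bp)$, since there is no identity relating the two. The Lagrange inversion is the step by which the Bethe roots enter the formula at all; without it the proof does not go through.
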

	\begin{proof}[Proof of Theorem \ref{thm: main} and Theorem \ref{thm: mainthmproof}]
		First, we express \eqref{eqn: appliedlocalizationformula} in terms of the cohomology classes introduced in Section \ref{sect: IntTheorySymm}. For this we fix a summand corresponding to a partition $\lambda$ and a tuple $\bmm = (m_{\lambda/\mu})_{\lambda/\mu}$ as in Proposition \ref{prop: localization} and let $\bn \coloneqq \lvert\bmm\rvert$. It follows from \eqref{eqn: Ftilde} that
		\begin{align*}
			\ch_z(\widetilde{\BF}) = \sum_{(i,j)\in \lambda} \exp\left(z\left(\CD_{i,j}-(i l_1+j l_2)\pt_C -it_1-jt_2\right)\right)
		\end{align*}
		with $\CD_{i,j} = \sum_{(i,j)\in \lambda/\mu} \CD_{\lambda/\mu}$ and $\CD_{\lambda/\mu}\subset C\times C^{(\bmm)}$ the universal divisors.
		We recall from \eqref{eqn: Duniv} that 
		\[
		\CD_{\lambda/\mu} = m_{\lambda/\mu}\cdot \pt_C + u_{\lambda/\mu} + \gamma_{\lambda/\mu}
		\]
		where 
		\[
		\gamma_{\lambda/\mu} = \sum_{l=1}^g \left(\beta_l^1\times \alpha_l^{\lambda/\mu}-\alpha_l^1\times \beta_l^{\lambda/\mu}\right).
		\]
		It is easy to check that 
		\[
		\gamma_{\lambda/\mu_1}\gamma_{\lambda/\mu_2} = -\pt_C\cdot \left(\theta_{\lambda/\mu_1,\lambda/\mu_2}+\theta_{\lambda/\mu_2,\lambda/\mu_1}\right)
		\]
		which gives
		\begin{align*}
			\ch_z(\widetilde{\BF}) &= \sum_{(i,j)\in\lambda}\exp\left(z\sum_{(i,j)\in \lambda/\mu} (m_{\lambda/\mu}\pt_C+u_{\lambda/\mu}+\gamma_{\lambda/\mu})-(il_1+jl_2)\pt_C-it_1-jt_2\right)\\
			&=\sum_{(i,j)\in\lambda}\exp\left(z\sum_{(i,j)\in \lambda/\mu} (u_{\lambda/\mu}+\gamma_{\lambda/\mu})-it_1-jt_2\right)\left(1+z(n_{(i,j)}-il_1-jl_2)\pt_C\right)\\
			&=\sum_{(i,j)\in\lambda}\exp\left(z\sum_{(i,j)\in \lambda/\mu} u_{\lambda/\mu}-it_1-jt_2\right)\left(1+z\sum_{(i,j)\in \lambda/\mu}\gamma_{\lambda/\mu} - z^2\pt_C\sum_{(i,j)\in \lambda/\mu_1,\lambda/\mu_2}\theta_{\lambda/\mu_1,\lambda/\mu_2}\right)\\
			&\cdot\left(1+z(n_{(i,j)}-il_1-jl_2)\pt_C\right)\\
			&=\sum_{(i,j)\in\lambda}\exp\left(z\sum_{(i,j)\in \lambda/\mu} u_{\lambda/\mu}-it_1-jt_2\right)\left(1+z\sum_{(i,j)\in \lambda/\mu}\left(\gamma_{\lambda/\mu} + (n_{(i,j)}-il_1-jl_2)\pt_C\right)\right.\\
			& \left.- z^2\pt_C\sum_{(i,j)\in \lambda/\mu_1,\lambda/\mu_2}\theta_{\lambda/\mu_1,\lambda/\mu_2}\right).
		\end{align*}
		Finally, using 
		\begin{align*}
			\gamma_{\lambda/\mu}\cdot \alpha_l^1  = \pt_C\cdot \alpha_l^{\lambda/\mu}\\
			\gamma_{\lambda/\mu}\cdot \beta_l^1 = \pt_C\cdot \beta_l^{\lambda/\mu}
		\end{align*}
		and \eqref{eqn: GRRondescendents} we see
		\begin{align*}
			\ch_z(1) &= z\sum_{(i,j)\in\lambda}E(z,\B_{i,j})\cdot \Bigg( n_{i,j}-il_1-jl_2 + l_1\CB(zt_1)+l_2\CB(zt_2)  \\
			&-z\sum_{(i,j)\in \lambda/\mu_1,\lambda/\mu_2}\theta_{\lambda/\mu_1,\lambda/\mu_2}\Bigg)\\
			\ch_z(\alpha_l) &= z\sum_{(i,j)\in\lambda}E(z,\B_{i,j})\cdot \sum_{(i,j)\in \lambda/\mu} \alpha_l^{\lambda/\mu}\\
			\ch_z(\beta_l) &=z\sum_{(i,j)\in\lambda}E(z,\B_{i,j})\cdot \sum_{(i,j)\in \lambda/\mu} \beta_l^{\lambda/\mu}\\
			\ch_z(\pt) &= E(y,\Bf)
		\end{align*}
		where we wrote $\Bf = (Y_\Box)_{\Box\in\lambda}$ for
		\[
		\B_{i,j} \coloneqq -i t_1 - j t_2 + \sum_{(i,j)\in \lambda/\mu} u_{\lambda/\mu}
		\]
		which using \eqref{eqn: Yoverlinedefi} can be written as $\Bf = \overline{\Bf}^\lambda(\bu)$ with $\bu = (u_{\lambda/ \mu})_{\lambda/\mu}$.
		Using Lemma \ref{usefulcohclass} we can further expand the denominator:
		\[
		\frac{1}{e(N_{\bmm})} = \overline{A}^{g-1}  \cdot \overline{B}_1^{l_1} \cdot \overline{B}_2^{l_2} \cdot \prod_{\lambda/\mu} \overline{F}_{\lambda/\mu}^\lambda(\Bf)^{-m_{\lambda/\mu}}\cdot h
		\]
		for 
		\begin{align*}
		\overline{A}(\Bf) &\coloneqq \prod_{0\neq (i,j)\in \lambda} \B_{i,j} \cdot \prod_{(i,j)\in\lambda} (t_1+t_2-\B_{i,j}) \cdot \prod_{\substack{(i,j),(k,l)\in\lambda\\0\leq a,b\leq 1\\(k,l)\neq (i+a,j+b)}} (at_1+bt_2 + \B_{k,l}-\B_{i,j})^{(-1)^{a+b+1}},\\
		\overline{B}_1(\Bf)&\coloneqq \prod_{(i,j)\in \lambda} \frac{\B_{i,j}^i}{(t_1+t_2-\B_{i,j})^{i+1}} \prod_{\substack{(i,j),(k,l)\in\lambda\\0\leq a,b\leq 1}} (at_1+bt_2 + \B_{k,l}-\B_{i,j})^{(i-k+a)(-1)^{a+b}}\\
		\overline{B}_2 (\Bf)&\coloneqq \prod_{ (i,j)\in \lambda} \frac{\B_{i,j}^j}{(t_1+t_2-\B_{i,j})^{j+1}} \prod_{\substack{(i,j),(k,l)\in\lambda\\0\leq a,b\leq 1}} (at_1+bt_2 + \B_{k,l}-\B_{i,j})^{(j-l+b)(-1)^{a+b}}\\
		h &\coloneqq \exp\left( \sum_{\lambda/\mu,\lambda/\mu'} \theta_{\lambda/\mu,\lambda/\mu'}\cdot \frac{\partial \overline{F}^\lambda_{\lambda/\mu}(\Bf)}{\partial u_{\lambda/\mu'}}/\overline{F}^\lambda_{\lambda/\mu}(\Bf)\right)
		\end{align*}
		where $\overline{F}^\lambda_{\lambda/\mu}(\Bf)$ is as in \eqref{eqn: Foverlinedef}.
		Using Lemma \ref{lemma: alphabetaintegrals} we can see that the contribution of $\bmm$ is the coefficient of $\bu^\bmm$ in 
		\[
			\overline{P}_{\lambda}(\bu)\cdot \prod_{\lambda/\mu} \overline{F}_{\lambda/\mu}^\lambda(\Bf)^{-m_{\lambda/\mu}}\cdot \prod_{\lambda/\mu} u_{\lambda/\mu}\cdot \detty{M_{\text{skew}}(\bu)}
		\] 
		where
		\begin{align}\label{eqn: Pbar_lambdadef}
			\begin{split}
			\overline{P}_\lambda(\bu) &=\prod_{i=1}^a x_i \cdot \sum_{\substack{\coprod_{i=0}^g S_i = \Set{1,\ldots,a}\\ (i_n,j_n)\in\lambda\text{ for }n\in S_{0}}}\prod_{n\in S_0} E(x_n,\B_{(i_n,j_n)}) \left(n_{(i_n,j_n)}-i_n t_1 - j_n t_2 + l_1\CB(x_n t_1)+l_2\CB(x_n t_2)\right)\\
			&\cdot\prod_{i=1}^b E(y_i,\Bf)
			\cdot \prod_{i=1}^g \left(\bz^i,\bw^i;\bx_{S_i}\right)'\\ 
			&\cdot \left(\overline{A}(\Bf)\cdot\prod_{\lambda/\mu} u_{\lambda/\mu}\cdot\detty{M_{\text{skew}}(\bu)}\right)^{g-1}\cdot \overline{B}_1(\Bf)^{l_1} \cdot \overline{B}_2(\Bf)^{l_2}.
			\end{split}
		\end{align}
		where 
		\begin{align*}
			\left(\bz,\bw;\bx\right)' \coloneqq& (-1)^n \sum_{\substack{\boldsymbol\mu^{(j)}=(\lambda/\mu_i^{(j)})_i\\ j=1,2,3,4\\\Box^{(1)}_i\in\lambda/\mu_i^{(1)},\Box^{(2)}_i\in\lambda/\mu_i^{(2)}\\ \Box^{(3)}_i\in\lambda/\mu_i^{(3)},\lambda/\mu_i^{(4)}}} \detty{\left(M_{\text{skew}}(\bu)^{-1}\right)_{\boldsymbol\mu^{(1)}\sqcup\boldsymbol\mu^{(3)};\boldsymbol\mu^{(2)}\sqcup\boldsymbol\mu^{(4)}}}\\ &\cdot\prod_{i=1}^m z_i w_i E(z_i,\B_{\Box^{(1)}_i}) E(w_i,\B_{\Box^{(2)}_i})\cdot \prod_{i=1}^n x_i E(x_i,\B_{\Box^{(3)}_i})
		\end{align*}
		and
		\[
		M_{\text{skew}}(\bu) \coloneqq \left(\frac{\delta_{\lambda/\mu,\lambda/\mu'}}{u_{\lambda/\mu}}+\frac{\partial \overline{F}^\lambda_{\lambda/\mu'}(\Bf) / \partial u_{\lambda/\mu}}{\overline{F}^\lambda_{\lambda/\mu'}(\Bf)} \right)_{\lambda/\mu,\lambda'/\mu'}.
		\]
		By using multivariate Lagrange inversion \cite[Theorem A]{Goodygoodgood} we see that this is the same as the coefficient of $\overline{\bp}^\bmm$ in $\overline{P}_\lambda(\bu(\overline{\bp}))$ where $\bu(\overline{\bp}) = \left(u_{\lambda/\mu}(\overline{\bp})\right)_{\lambda/\mu}$ is the unique power series in $\overline{\bp} = \left(\overline{p}_{\lambda/\mu}\right)_{\lambda/\mu}$ satisfying $u_{\lambda/\mu}(\overline{\bp}) = \CO (\overline{\bp}^{>0})$ and 
		\[
			\overline{p}_{\lambda/\mu} =u_{\lambda/\mu}(\overline{\bp})\cdot\overline{F}^\lambda_{\lambda/\mu}(\overline{\Bf}^{\lambda}(\bu(\overline{\bp}))).
		\]
		for any $\lambda/\mu$. Note however that neither the expression $\overline{P}_{\lambda}(\bu)$ nor the accompanying prefactor 
		\begin{equation}\label{eqn: p-prefactor}
			p^{\lVert\bmm\rVert-n(\lambda)\cdot l_1 - n(\bar{\lambda})\cdot l_2} = p^{\lvert\bn\rvert-n(\lambda)\cdot l_1 - n(\bar{\lambda})\cdot l_2}
		\end{equation}
		in \eqref{eqn: appliedlocalizationformula} depend on $\bmm$, but rather on $\bn$. Therefore we may consider the sum over all coefficients of $\bp^\bmm$ with $\bmm\vdash \bn$ for a fixed $\bn$. This sum can be expressed as:
		\[
			\sum_{\bmm\vdash\bn} [\overline{\bp}^\bmm] \overline{P}_\lambda(\bu(\overline{\bp})) = [\bp^\bn] \overline{P}_\lambda(\bu^\lambda(\bp))
		\]
		with $\bu^\lambda(\bp)$ as in Theorem \ref{thm: multivariablebetheunique}\eqref{cond: recursion_multi_S} and $\bp = (p_\Box)_{\Box\in\lambda}$ a new set of variables. As a result, $\Bf$ is the multivariate Bethe root $\Bf^\lambda(\bp)$ constructed in Theorem \ref{thm: multivariablebetheunique}.
		Using Lemmas \ref{lemma: bethedet} and \ref{prop: bethematrices} we see that 
		\[
			\overline{A}(\Bf)\cdot\prod_{\lambda/\mu} u^\lambda_{\lambda/\mu}(\bp)\cdot\detty{M_{\text{skew}}(\bu^\lambda(\bp))} = A(\Bf)\cdot \detty{M(\Bf)}
		\]
		and
		\[
			\left(\bz,\bw;\bx\right)' = \left( \bz,\bw;\bx\mid \Bf\right)_{M(\Bf)^{-1}}.
		\]
		By making these substitutions in $\overline{P}_{\lambda}$ we obtain an expression that only depends on $\Bf^\lambda$ as opposed to $\bu^\lambda$. In that expression we can furthermore replace $n_{(i,j)}$ by $n_{(i,j)}+it_1+jt_2$. Because of the shift in the prefactor \eqref{eqn: p-prefactor} and 
		\begin{align*}
			\overline{B}_1(\Bf)&= B_1(\Bf) \cdot \prod_{(i,j)\in\lambda} \overline{F}^\lambda_{\lambda/\mu}(\Bf)^i = \prod_{(i,j)\in\lambda} p_{(i,j)}^{i}\cdot B_1(\Bf)\\
			\overline{B}_2(\Bf)&= B_2(\Bf)\cdot \prod_{(i,j)\in\lambda} \overline{F}^\lambda_{\lambda/\mu}(\Bf)^j = \prod_{(i,j)\in\lambda} p_{(i,j)}^{j}\cdot B_2(\Bf)
		\end{align*}	
		we may write 
		\begin{align*}
			&\left\langle \prod_{i=1}^a\ch_{x_i}(1) \cdot\prod_{i=1}^b\ch_{y_i}(\pt)\cdot \prod_{l=1}^g \Big(\ch_{z_1^l}(\alpha_l)\ch_{w_1^l}(\beta_l)\ldots\ch_{z_{c_l}^l}(\alpha_l)\ch_{w_{c_l}^l}(\beta_l)\Big) \right\rangle_{d}^X\\
			& = p^{d(1-g)}\sum_{\lambda \vdash d} \eval{{P}_\lambda}_{p_\Box = p}
		\end{align*}
		for 
		\begin{align}\label{eqn: P'_lambda def}
			\begin{split}
			P_\lambda =& \prod_{i=1}^a x_i\cdot \sum_{\substack{\coprod_{i=-1}^g S_i = \Set{1,\ldots,a}\\ \Box_i\in\lambda\text{ for }i\in S_{-1}}}\prod_{i\in S_-1}\left(p_{\Box_i}\frac{\partial}{\partial p_{\Box_i}}\right) \prod_{i\in S_{-1}} E(x_i,\B^\lambda_{\Box_i}) \\ &\cdot \prod_{i\in S_0} \Big(l_1\mathfrak{B}(x_i t_1)+l_2\mathfrak{B}(x_i t_2)\Big)E(x_i,\Bf^\lambda)
			\cdot\prod_{i=1}^b E(y_i,\Bf^\lambda)
			\\&\cdot \left( \bz^i,\bw^i;\bx_{S_i}\mid \Bf^\lambda\right)_{M(\Bf^\lambda)^{-1}}
			\cdot A(\Bf^\lambda)^{g-1}\cdot B_1(\Bf^\lambda)^{l_1} \cdot B_2(\Bf^\lambda)^{l_2}.
			\end{split}
		\end{align}
		In order for this expression to agree with \eqref{eqn: Plambdamaindef} and \eqref{eqn: P_lambda def} we need
		\[
			M(\Bf^\lambda)^{-1} = \left(p_\Box\frac{\partial \B^\lambda_{\Box'}}{\partial p_{\Box}}\right)_{\Box,\Box'\in\lambda}
		\]
		which can be seen by applying partials $p_\Box\frac{\partial}{\partial p_\Box}$ to the Bethe equations \eqref{eqn: multivarBetheequn} i.e.
		\[
			p_{\Box'} = F_{\Box'}(\Bf^\lambda(\bp)).
		\]
	\end{proof}
	Using Theorem \ref{thm: main} we can now prove Theorem \ref{thm: structurethm} part \eqref{conj: rationality} and \eqref{conj: powerminusone}:
	\begin{proof}[Proof of Theorem \ref{thm: structurethm}\eqref{conj: rationality},\eqref{conj: powerminusone}]
		It follows from Theorem \ref{conj: allbethes} that the subscheme $\mathbf{Be}\subset \BA_{\QQ(t_1,t_2,p)}^d$ of fully admissible solutions of the Bethe equations has only finitely many points $K$-valued points. Since $K$ is algebraically closed it follows that $\mathbf{Be}$ is $0$-dimensional and thus all its points are $\overline{\QQ(t_1,t_2,p)}$-valued. As the absolute Galois-group \[G\coloneqq\Aut(\overline{\QQ(t_1,t_2,p)}/\QQ(t_1,t_2,p))\] preserves $\mathbf{Be}$ it thus also preserves the Bethe roots and acts on them by a combination of permuting the coordinates and permuting the partitions. But as \eqref{eqn: main fucker} is invariant under such permutations, all descendent invariants on local curves must be in $\overline{\QQ(t_1,t_2,p)}^G= \QQ(t_1,t_2,p)$ as desired. To prove part \eqref{conj: powerminusone} one first observes that $\Bf \mapsto (t_1+t_2-\B_i)_i$ is an isomorphism from $\mathbf{Be}$ to the base-change of $\mathbf{Be}$ by $\QQ(t_1,t_2,p)\to \QQ(t_1,t_2,p),p\mapsto p^{-1}$. Hence replacing $p$ by $p^{-1}$ amounts to replacing $\B_\Box^\lambda$ by $t_1+t_2-\B_\Box^\lambda$ since permutations of the partitions or boxes do not matter. Therefore we need to look at all the factors in \eqref{eqn: Plambdamaindef} and see how $\Bf \mapsto (t_1+t_2-\B_i)_i$ changes them. It is easy to see that $A(\Bf)$ and $M(\Bf)$ stay invariant under this substitution while $B_i(\Bf)$ gets replaced by $\prod_j F_j(\Bf)^{-1} B_i(\Bf)$. In case no descendents of $1$ are present this will give part of the $p^{-d_\beta}$ prefactor noting that $d_{d\cdot [C]}(X) = d\cdot (l_1+l_2+2-2g)$. In general we will get extra summands from 
		\[
		\left[\nabla^{\Bf}_i,\prod_{j} F_j(\Bf)^{-1}\right]  = -\prod_{j} F_j(\Bf)^{-1}
		\]
		which can be absorbed into the product over $S_0$ by using
		\[
		-\CB(x)-1 = \CB(-x).
		\]
		Finally, all descendent variables get negated because of \[E(z,t_1+t_2-X) = E(-z,X).\] 
	\end{proof}
	One can also deduce Theorem \ref{thm: eigenvalues} from Theorem \ref{thm: main} as follows:
	\begin{proof}[Proof of Theorem \ref{thm: eigenvalues}]
		Using the degeneration formula \cite[Section 6]{LiTheDegen} we have
		\[
		M(z_1,\ldots,z_n) = M(z_1)\ldots M(z_n)
		\]
		and hence the claim only has to be shown for $n=1$.
		Furthermore, we can degenerate $\BC^2\times E$ to a circle formed by copies of $\BC^2\times \BP^1$ where $\BC^2\times \Set{\infty}$ in each copy is identified with $\BC^2\times \Set{0}$ inside the next copy. This gives
		\begin{equation}\label{eqn: traceshit}
			\Tr \big[ M(z_1,\ldots,z_n)\big] = \langle \ch_{z_1}(\pt)\ldots\ch_{z_n}(\pt) \rangle_d^{\BC^2\times E} = \sum_{\lambda\vdash d} \prod_{i=1}^n\sum_{\Box\in\lambda} E(z_i,\B_\Box^\lambda)
		\end{equation}
		where we used Theorem \ref{thm: main} in the last equality.
		In particular, if we define $M_k$ so that
		\[
		\frac{t_1 t_2}{(1-e^{-t_1 z})(1-e^{-t_2 z})}M(z) = \sum_{k\geq 0} M_k \frac{z^k}{k!}
		\]
		then we get
		\[
		\Tr \big[ M_k^n\big] = \sum_{\lambda\vdash d} \left(\sum_{\Box\in\lambda} \left(\B_\Box^\lambda\right)^k\right)^n.
		\]
		It follows that the eigenvalues of $M_k$ must be the sums
		\[
		\sum_{\Box\in\lambda} \left(\B_\Box^\lambda\right)^k
		\]
		for $\lambda\vdash d$. We now claim that one can choose $k$ so that all of these power sums are distinct and the power sum corresponding to $\lambda$ has an eigenvector of the shape $v_\lambda = [\lambda]+\CO(p)$. For this we recall that
		\[
			P_d(\BC^2\times\BP^1/\Set{0,\infty},d) = \Hilb^d(\BC^2)
		\]
		and $\eval{M_k}_{p=0}$ corresponds to multiplication by $k!\cdot\ch_k(\pi_*\CO_Z)$ where $Z\subset \BC^2\times\Hilb^d(\BC^2)$ is the universal 0-dimensional subscheme. Furthermore, the fixed points $\lambda\in\Hilb^d(\BC^2)$ form an eigenbasis for multiplication by any class. Since 
		\[
			H^0\left(\CO_\lambda\right) = \bigoplus_{(i,j)\in\lambda} \BC\cdot \bt_1^{-i} \bt_2^{-j} 
		\] 
		 it follows that $\eval{M_k}_{p=0}$ has eigenvalue $\sum_{(i,j)\in\lambda} (-it_1-jt_2)^k$ at $[\lambda]$. To prove the distinctness on power sums we first specialize $t_1,t_2$ so that the $-it_1-jt_2$ for $0\leq i,j\leq d-1$ are distinct nonnegative numbers. One can show that any two sets of nonnegative numbers whose power sums agree for infinitely many powers must be equal\footnote{This follows from $\max(S) = \lim\limits_{k\to\infty}\left(\sum_{i\in S} i^k\right)^{1/k}$ for any non-empty finite $S\subset\BR_{\geq 0}$.} - hence any $k_0\gg 0$ will work. As a result,  $M_{k_0}$ has simple spectrum since $\eval{M_{k_0}}_{p=0}$ has and is therefore diagonalizable. Furthermore, for any $\lambda$ we have 
		\[
			\eval{\sum_{\Box\in\lambda} \left(\B_\Box^\lambda\right)^{k_0}}_{p=0} = \sum_{(i,j)\in\lambda} (-it_1-jt_2)^{k_0}
		\] 
		by Theorem \ref{thm: Bethechar}\eqref{cond: innit} and so we must have $\eval{v_\lambda}_{p=0} = [\lambda]$ for an appropriate eigenvector $v_\lambda$ with eigenvalue $\sum_{\Box\in\lambda} \left(\B_\Box^\lambda\right)^{k_0}$. It now remains to show that the $v_\lambda$ form an eigenbasis of $M(z)$ with eigenvalues as claimed. For this we note that
		\[
		M(z_1)M(z_2) = M(z_1,z_2) = M(z_2,z_1) = M(z_2)M(z_1)
		\]
		by degeneration and hence $M(z)$ commutes with $M_{k_0}$. Therefore the $v_\lambda$ indeed diagonalize $M(z)$ and we can write $a_\lambda(z) $ for the eigenvalue at $v_\lambda$. It follows that
		\[
		\Tr \big[ M(z) M_{k_0}^n\big] = \sum_{\lambda\vdash d} a_\lambda(z) \left(\sum_{\Box\in\lambda} \left(\B_\Box^\lambda\right)^{k_0}\right)^n
		\]
		for any $n\geq 0$. But by \eqref{eqn: traceshit} this must also equal 
		\[
		\sum_{\lambda\vdash d} \left(\sum_{\Box\in\lambda} E(z,\B_\Box^\lambda)\right) \left(\sum_{\Box\in\lambda} \left(\B_\Box^\lambda\right)^{k_0}\right)^n
		\]
		and as the eigenvalues of $M_{k_0}$ are pairwise distinct, this implies
		\[
		a_\lambda(z) = \sum_{\Box\in\lambda} E(z,\B_\Box^\lambda)
		\]
		by the invertibility of the Vandermonde matrix.
	\end{proof}
	\begin{remark}
		One could have shortened the proof somewhat by using that $M_3$ has simple spectrum \cite[Proof of Corollary 1]{QcohHilb}. Recalling that stable pairs on $\BC^2\times C$ are the same thing as quasi-maps from $C$ to $\Hilb^d(\BC^2)$ (c.f. \cite[Exercise 4.3.22]{OkounkovLectureNotes}), this corresponds to the fact that the quantum cohomology of $\Hilb^d(\BC^2)$ is generated by divisors. However, we chose to circumvent this fact since the analogous claim for general Nakajima quiver varieties is still a conjecture \cite[Question 1]{QgroupQcoh}. This makes it possible to repeat the above proof for quasi-maps to quiver varieties, which gives a new proof of the fact that the spectrum of quantum multiplication is described by solutions of Bethe equations. See \cite{ShamelessPlug} for more details and consequences.
	\end{remark}
	
	\section{More on Bethe roots}\label{sect: Bethe}
	We now want to prove Theorem \ref{thm: Bethechar} and Theorem \ref{thm: structurethm}\eqref{conj: poles}. As mentioned in the introduction, one would like to prove Theorem \ref{thm: structurethm}\eqref{conj: poles} by first showing that $\B_\Box^\lambda(p)$ is holomorphic at $p=0$ and can be locally extended\footnote{By this we mean that it can be analytically continued to any simply connected open subset thereof. However, this continuation is usually not unique. Indeed, see \cite{procházka} for an numerical investigation of the monodromy.} to all of 
	\[\BC\setminus\Set{\zeta|(-\zeta)^n=1\text{ for some } 1\leq n\leq d }.\] Indeed, if all factors in \eqref{eqn: Plambdamaindef} were polynomial in $\Bf^\lambda$, then this would already be enough, however most are merely rational functions and so we have no a priori control over their poles. Luckily, one can circumvent this problem at the cost of working with a multivariate version of the Bethe roots and using \eqref{eqn: P_lambda def} instead of \eqref{eqn: Plambdamaindef}.
	\subsection{Multivariate Bethe roots}
	We begin by showing a multivariable generalization of Theorem \ref{thm: Bethechar}.
	For this let $k = \QQ(t_1,t_2)$ and $\lambda$ be a fixed partition. For the rest of this subsection we will fix a collection $\bp = (p_\Box)_{\Box\in\lambda}$ of possibly repeating variables which are otherwise free. Let $\nu$ be the non-archimedean valuation on $k[[\bp]]$ given for any 
	\[
		x =  \sum_{\substack{\bn = (n_\Box)_{\Box\in\lambda}\\n_\Box\geq 0}} a_{\bn}\cdot \bp^{\bn}\in k[[\bp]]
	\] 
	by
	\[
		\nu(x) \coloneqq \inf\Set{ m| \text{ there is an } \bn \text{ so that } a_{\bn}\neq 0 \text{ and } m=\sum_{\Box\in\lambda} n_\Box} \in \BN\cup\{\infty\}
	\] 
	Furthermore let $K$ be any field containing $k[[\bp]]$ equipped with an extension of $\nu$ which we will also denote by $\nu$. In particular if $n=1$ one may choose $K = \overline{\QQ(t_1,t_2)}\{\{p\}\}$ to be the field of Puiseux series with its canonical valuation.
	\begin{theorem}\label{thm: multivariablebetheunique}
		There is a tuple $\bY^{\lambda}(\bp) = (Y_{\Box}^\lambda(\bp))_{\Box\in\lambda}$ of power series $Y_\Box^\lambda(\bp)\in k[[\bp]]$ characterized uniquely via any of the following equivalent descriptions:
		\begin{enumerate}
			\item\label{item: multivar_half_alg_char} $\Bf^\lambda\in K^d$ is the unique tuple such that
			\begin{itemize}
				\item\label{cond: admissible_multi} it is \textit{admissible} in the sense that for any $\Box\in\lambda$ we have $\B_{\Box}^\lambda\not\in \{0,t_1+t_2\}$ and for $\Box\neq\Box'\in\lambda$ we have $\B_\Box^\lambda-\B_{\Box'}^\lambda\not\in\{t_1,t_2,t_1+t_2\}$.
				\item\label{item: initialterm_multi} one has
				\begin{equation}
				\nu\left(\B_{(i,j)}^\lambda  +it_1+jt_2\right)>0
				\end{equation}
				\item it satisfies the \textit{multivariate Bethe equations} i.e. for every $\Box\in\lambda$ we have
				\begin{align}\label{eqn: multivarBetheequn}
					p_\Box = F_{\Box}(\Bf^\lambda) 
				\end{align}
				where
				\[
				F_{\Box}(\Bf) = f_\Box(\Bf) \prod_{\Box\neq\Box'\in\lambda} g_{\Box',\Box}(\Bf)
				\]
				with 
				\[
					f_\Box(\Bf) = \frac{\B_{\Box}}{t_1+t_2-\B_{\Box}}
				\] 
				and
				\[
					g_{\Box,\Box'}(\Bf) = \prod_{\substack{0\leq a,b,c\leq 1\\(a,b)\neq (0,0)}} \left((-1)^c(at_1+bt_2)+\B_{\Box}-\B_{\Box'} \right)^{(-1)^{a+b+c}}
				\] 
			\end{itemize}
			\item\label{cond: recursion_multi_box} It can be written as
			\begin{align}
				\Bf^{\lambda}(\bp) = \widetilde{\Bf}^\lambda(\bv^\lambda) 			
			\end{align}
			where for any tuple $\bv = (v_\Box)_{\Box\in\lambda}$ we define $\widetilde{\Bf}^\lambda(\bv) = (\widetilde{\B}_\Box^\lambda(\bv) )_{\Box\in\lambda}$ by 
			\begin{equation}\label{eqn: Ytildef}
				\widetilde{\B}_{(i,j)}^\lambda(\bv)\coloneqq -it_1-jt_2+\sum_{\substack{\lambda/\mu\text{ conn. skew}\\ (i,j) \in \lambda/\mu}} \prod_{\Box\in \lambda/\mu} v_\Box
			\end{equation}
			and $\bv^{\lambda} = (v^\lambda_\Box)_{\Box\in\lambda}$ is the unique tuple of power series in $k[[\bp]]$ so that $v^\lambda_\Box = \CO(\bp^{>0})$ for all $\Box\in\lambda$ and
			\[
				p_{\Box} =v^\lambda_\Box\cdot\widetilde{F}^\lambda_{\Box}(\widetilde{\Bf}^{\lambda}(\bv^\lambda))
			\]
			where
			\[
				\widetilde{F}^\lambda_{\Box}(\Bf) = \tilde{f}_\Box(\Bf) \prod_{\Box\neq\Box'\in\lambda} \widetilde{g}_{\Box',\Box}(\Bf)
			\]
			with
			\[
				\tilde{f}_{\Box}(\Bf) \coloneqq \frac{\B_{\Box}^{1-\delta_{\Box,(0,0)}}}{t_1+t_2-\B_{\Box}}
			\]
			and 
			\[
				\widetilde{g}_{\Box,\Box'}(\Bf) = -\prod_{\substack{0\leq a,b,c\leq 1\\ (a,b)\neq (0,0)\\ \Box\neq \Box'+(-1)^c( a,b)}}\left(at_1+bt_2+(-1)^c (\B_{\Box}-\B_{\Box'} )\right)^{(-1)^{a+b+c}}.
			\]
			\item\label{cond: recursion_multi_S} It can be written as
			\begin{align}
				\Bf^{\lambda}(\bp) = \overline{\Bf}^\lambda(\bu^\lambda) 			
			\end{align}
			where for any tuple $\bu = (\bu_{\lambda/\mu})_{\lambda/\mu}$ we define $\overline{\Bf}^\lambda(\bu) = (\overline{\B}_{\Box}^\lambda(\bu))_{\Box\in\lambda}$ by
			\begin{equation}\label{eqn: Yoverlinedefi}
				\overline{\B}_{(i,j)}^\lambda(\bu)\coloneqq -it_1-jt_2+\sum_{\substack{\lambda/\mu\text{ conn. skew}\\ (i,j) \in \lambda/\mu}} u_{\lambda/\mu}
			\end{equation}
			and where $\bu^\lambda = (u^\lambda_{\lambda/\mu})_{\lambda/\mu}$ is the unique tuple of power series in $k[[\bp]]$ so that $u_{\lambda/\mu}^\lambda = \CO(\bp^{>0})$ for all $\Box\in\lambda$ and
			\[
			\prod_{\Box\in\lambda/\mu} p_{\Box} =u_{\lambda/\mu}^\lambda\cdot\overline{F}^\lambda_{\lambda/\mu}(\overline{\Bf}^{\lambda}(\bu^\lambda))
			\]
			where
			\begin{equation}\label{eqn: Foverlinedef}
			\overline{F}^\lambda_{\lambda/\mu}(\Bf) = \prod_{\Box\in\lambda/\mu}\tilde{f}_{\Box}(\Bf) \prod_{\substack{\Box\in\lambda/\mu\\\Box'\not\in\lambda/\mu}} \widetilde{g}_{\Box',\Box}(\Bf).
			\end{equation}
			\item\label{cond: closedform_multi} $\B_\Box^\lambda(\bp)$ is the coefficient of $\prod_{\Box\in\lambda} v_\Box^{-1}$ in
			\[
			\widetilde{Y}^\lambda_\Box(\bv)\cdot\begin{Vmatrix}
				\frac{\partial F_{\Box'}(\widetilde{\Bf}^{\lambda}(\bv))/\partial v_{\Box''}}{F_{\Box'}(\widetilde{\Bf}^{\lambda}(\bv))}
			\end{Vmatrix} \cdot\prod_{\Box'\in\lambda} \frac{1}{1-p_{\Box'}\cdot F_{\Box'}(\widetilde{\Bf}^{\lambda}(\bv))^{-1}} 
			\]
			all of whose $\bp$-coefficients turn out to be Laurent series in $\bv$. Furthermore, $\widetilde{\Bf}^{\lambda}(\bv)$ is as in \eqref{eqn: Ytildef}.
		\end{enumerate}
	\end{theorem}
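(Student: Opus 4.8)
The plan is to take the box-recursion (the second description) as the fundamental construction and to deduce the other three from it. First I would establish the existence and the uniqueness asserted in that description by a $\nu$-adic iteration. Since every factor of $\widetilde{F}^\lambda_\Box$ has a nonzero $p^0$-coefficient (as already noted in the footnote to Theorem~\ref{thm: Bethechar}), the quantity $\widetilde{F}^\lambda_\Box(\widetilde{\Bf}^\lambda(\bv))$ has valuation $0$ whenever $v_\Box = \CO(\bp^{>0})$, so the defining equation rewrites as $v_\Box = p_\Box/\widetilde{F}^\lambda_\Box(\widetilde{\Bf}^\lambda(\bv))$, whose right-hand side strictly increases the $\nu$-valuation of differences. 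The iteration therefore converges exactly as in that footnote, producing a unique tuple $\bv^\lambda$, and I set $\Bf^\lambda\coloneqq\widetilde{\Bf}^\lambda(\bv^\lambda)$.

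Next I would prove the equivalence of the box-recursion and the skew-recursion via the substitution $u_{\lambda/\mu}=\prod_{\Box\in\lambda/\mu}v_\Box$, under which \eqref{eqn: Ytildef} and \eqref{eqn: Yoverlinedefi} give $\overline{\Bf}^\lambda(\bu)=\widetilde{\Bf}^\lambda(\bv)$ verbatim. Taking the product over $\Box\in\lambda/\mu$ of the box-recursion turns it into the skew-recursion provided $\overline{F}^\lambda_{\lambda/\mu}=\prod_{\Box\in\lambda/\mu}\widetilde{F}^\lambda_\Box$. Comparing \eqref{eqn: Foverlinedef} with the definition of $\widetilde{F}^\lambda_\Box$, this reduces to the vanishing of the contribution of all factors $\widetilde{g}_{\Box',\Box}$ with both $\Box,\Box'\in\lambda/\mu$, which follows from the antisymmetry $\widetilde{g}_{\Box,\Box'}\cdot\widetilde{g}_{\Box',\Box}=1$ coming from the involution $c\mapsto 1-c$ on the index set (it sends each base factor to the reciprocal of a base factor of the opposite term, negates the exponent, and matches the exclusion conditions, the two leading signs multiplying to $+1$). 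I would isolate this as a short lemma.

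The heart of the argument is the equivalence with the algebraic characterization, and it rests on the rational-function identity
\[
F_\Box(\widetilde{\Bf}^\lambda(\bv)) = v_\Box\cdot\widetilde{F}^\lambda_\Box(\widetilde{\Bf}^\lambda(\bv)).
\]
Because $\widetilde{F}^\lambda_\Box$ is by construction $F_\Box$ with exactly the resonant factors removed, namely those indexed by $\Box'=\Box+(-1)^c(a,b)$, whose leading terms cancel under $\Bf=\widetilde{\Bf}^\lambda(\bv)$, this identity amounts to the statement that the product of the resonant factors collapses to $v_\Box$, a telescoping over the poset of boxes which I would prove using the skew-partition combinatorics of Section~\ref{sect: combi}; this is the step I expect to be the main obstacle. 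Granting it, $\Bf^\lambda$ solves \eqref{eqn: multivarBetheequn}, while admissibility and the valuation condition $\nu(\B^\lambda_{(i,j)}+it_1+jt_2)>0$ follow by inspecting leading terms: the $-it_1-jt_2$ collide with the forbidden loci only in resonant cases, where the strictly positive-valuation corrections (nonzero since the $\bp_\Box$ are free variables) prevent equality. For the converse uniqueness over $K$ I would read the identity backwards: given any admissible solution with the prescribed valuation, set $v_\Box\coloneqq p_\Box/\widetilde{F}^\lambda_\Box(\Bf)$ (so $\nu(v_\Box)=\nu(p_\Box)>0$), observe that the Bethe equations then force the resonant products to equal $v_\Box$, and deduce by $\nu$-adic induction on the increments that $\B_{(i,j)}=-it_1-jt_2+\sum_{(i,j)\in\lambda/\mu}\prod_{\Box}v_\Box$, i.e. $\Bf=\widetilde{\Bf}^\lambda(\bv)$ with $\bv$ solving the box-recursion, whence $\Bf=\Bf^\lambda$ by the uniqueness already established.

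Finally, the closed formula is a repackaging of multivariate Lagrange–Good inversion \cite{Goodygoodgood} applied to $p_\Box=v_\Box\,\widetilde{F}^\lambda_\Box(\widetilde{\Bf}^\lambda(\bv))$: expanding $\prod_{\Box'}(1-p_{\Box'}F_{\Box'}(\widetilde{\Bf}^\lambda(\bv))^{-1})^{-1}$ as a geometric series in $\bp$ and extracting the coefficient of $\prod_\Box v_\Box^{-1}$ sums Good's formula over all multidegrees, with the Jacobian determinant $\detty{\partial F_{\Box'}/\partial v_{\Box''}\,/\,F_{\Box'}}$ being precisely the factor occurring there and $\widetilde{Y}^\lambda_\Box(\bv)$ playing the role of the test function; the Laurent-series and convergence claims are handled exactly as in the footnote to Theorem~\ref{thm: Bethechar}.
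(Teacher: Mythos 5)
Your architecture matches the paper's almost exactly: a $\nu$-adic contraction (Banach fixed point) for the box-recursion, the substitution $u_{\lambda/\mu}=\prod_{\Box\in\lambda/\mu}v_\Box$ together with $\widetilde{g}_{\Box,\Box'}=\widetilde{g}_{\Box',\Box}^{-1}$ for the skew version, the factorization $F_\Box(\widetilde{\Bf}^{\lambda}(\bv))=v_\Box\cdot\widetilde{F}^\lambda_{\Box}(\widetilde{\Bf}^{\lambda}(\bv))$ as the bridge to the algebraic characterization, and Lagrange--Good inversion for the closed formula. The substantive divergence is in the uniqueness half of description \eqref{item: multivar_half_alg_char}, and there your plan has a gap. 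The identity you single out as the main obstacle --- ``the product of the resonant factors collapses to $v_\Box$'' --- is the composite $\widetilde{\bv}^\lambda\circ\widetilde{\Bf}^\lambda=\id$, which is what you need to show that the constructed tuple \emph{solves} the Bethe equations. For uniqueness you need the opposite composite: every admissible $\Bf$ satisfies $\widetilde{\Bf}^\lambda(\widetilde{\bv}^\lambda(\Bf))=\Bf$, so that an arbitrary admissible solution with the prescribed leading terms is forced into the image of $\widetilde{\Bf}^\lambda$ and hence caught by the already-established uniqueness of the fixed point. This is exactly what Lemma \ref{lemma: box iso bethe} proves, by induction on $\lvert\lambda\rvert$ via a genuine telescoping computation, and the composite you describe is then deduced formally (\'etale monomorphism, hence open immersion). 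Your proposed substitute, a ``$\nu$-adic induction on the increments,'' cannot be a routine linearization: the Jacobian of $\Bf\mapsto\widetilde{\bv}^\lambda(\Bf)$, computed in Lemma \ref{lemma: bethedet}, is a product of precisely the resonant factors $at_1+bt_2+\B_{i+a,j+b}-\B_{i,j}$, all of which vanish or blow up at the leading-order point $\B_{(i,j)}=-it_1-jt_2$, so one cannot recover $\Bf$ from $\widetilde{\bv}^\lambda(\Bf)$ order by order by inverting a leading Jacobian. In practice you would end up having to prove Lemma \ref{lemma: box iso bethe} anyway, and that lemma is where the real work of the theorem lives.

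Two smaller points. First, the theorem allows the $p_\Box$ to repeat (this is essential for recovering Theorem \ref{thm: Bethechar} by setting all $p_\Box=p$), so the justification ``nonzero since the $\bp_\Box$ are free variables'' for admissibility does not apply as stated; one can instead note that for adjacent boxes $\Box\leq\Box'$ the correction to $\B_{\Box'}-\B_{\Box}$ is a sum of $u_{\lambda/\mu}$ over skew partitions containing $\Box'$ but not $\Box$, which has a unique term of minimal valuation, namely the one for $\{\Box''\in\lambda : \Box''\geq\Box'\}$, and hence cannot vanish. Second, your treatment of the skew/box equivalence and of the closed formula via \cite{Goodygoodgood} coincides with the paper's.
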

	\begin{remark} 
		Theorem \ref{thm: Bethechar} follows from Theorem \ref{thm: multivariablebetheunique} by taking $\bp= (p,\cdots,p)$. In particular, the recursion of Theorem\ref{thm: Bethechar}\eqref{cond: recursion} is just Banach fixed point iteration. 
	\end{remark}
	\begin{proof}
		Indeed, the tuples in \eqref{cond: recursion_multi_box} and \eqref{cond: recursion_multi_S} are not just unique over $k[[\bp]]$, but also over $K$. We will only show this for \eqref{cond: recursion_multi_box} as \eqref{cond: recursion_multi_S} is similar. For this note that on the set
		\[
		\CU\coloneqq \Set{\bv = (v_\Box)_{\Box\in\lambda}\in K^d| \nu(v_\Box)>0 \text{ for all }\Box\in\lambda}
		\]
		the map $\bG\colon \CU\to\CU, \bv\mapsto (G_\Box(\bv))_{\Box\in\lambda}$ given by 
		\[
			G_\Box(\bv)\coloneqq p_{\Box}\cdot \widetilde{F}^\lambda_{\Box}(\widetilde{\Bf}^{\lambda}(\bv))^{-1}
		\]
		is a contraction with respect to the metric induced by $\nu$. Therefore the Banach fixed point theorem implies that there can only be at most one fixed point and since the complete subset $\CU\cap k[[\bp]]^d$ is invariant with respect to $\bG$ it follows that this fixed point exists and its coordinates must be power series.\\
		To show the uniqueness in \eqref{item: multivar_half_alg_char} and the equivalence of \eqref{item: multivar_half_alg_char}, \eqref{cond: recursion_multi_box} and \eqref{cond: recursion_multi_S} it will suffice to establish bijections (indeed we will give isomorphisms of varieties) between
		\[
		\mathbf{Be}\coloneqq\Set{ \Bf = (\B_\Box)_{\Box\in\lambda}\in K^d | \begin{array}{l} \Bf\text{ is admissible and} \\\text{for all }\Box\in\lambda \colon
				p_\Box = F_{\Box}(\Bf) \end{array}}
		\]
		and
		\[
		\widetilde{\mathbf{Be}}\coloneqq\Set{\bv = (v_\Box)_{\Box\in\lambda}\in K^d | \begin{array}{l}\widetilde{\Bf}^\lambda(\bv)\text{ is admissible and}\\\text{for all } \Box\in\lambda \colon p_{\Box} =v_\Box\cdot\widetilde{F}^\lambda_{\Box}(\widetilde{\Bf}^{\lambda}(\bv))\end{array} }
		\]
		and
		\[
		\overline{\mathbf{Be}}\coloneqq\Set{\bu = (u_{\lambda/\mu})_{\lambda/\mu}\in \prod_{\lambda/\mu} K | \begin{array}{l}\overline{\Bf}^\lambda(\bu)\text{ is admissible and}\\\text{for all } \lambda/\mu \colon \prod_{\Box\in\lambda/\mu} p_{\Box} =u_\Box\cdot\overline{F}^\lambda_{\Box}(\overline{\Bf}^{\lambda}(\bu))\end{array} }
		\]
		so that the conditions $\nu(Y_{i,j}+it_1+jt_2)>0$, $\nu(v_\Box)>0$ and $\nu(u_{\lambda/\mu})>0$ become equivalent and $\Bf = \widetilde{\Bf}^\lambda(\bv) = \overline{\Bf}^\lambda(\bu)$.
		We claim that
		\begin{align*}
		\widetilde{\mathbf{Be}} &\longleftrightarrow \overline{\mathbf{Be}}\\
		\bv &\longmapsto \bu(\bv) = (u_{\lambda/\mu}(\bv))_{\lambda/\mu}\\
		\bv(\bu) = (v_\Box(\bu))_{\Box\in\lambda} &\longmapsfrom \bu
		\end{align*} 
		for 
		\[
		v_\Box(\bu) \coloneqq p_{\Box}\cdot \widetilde{F}^\lambda_{\Box}(\overline{\Bf}^{\lambda}(\bu))^{-1}
		\]
		and
		\[
			u_{\lambda/\mu}(\bv) \coloneqq \prod_{\Box\in\lambda/\mu} v_\Box
		\]
		is one such bijection. Indeed, we have 
		\[
			\overline{\Bf}^\lambda(\bu(\bv)) = \widetilde{\Bf}^\lambda(\bv)
		\]
		which shows the well-definedness of $\bu(\bv)$ and $\bv(\bu(\bv))=\bv$. For the rest can use $\tilde{g}_{\Box,\Box'}(\Bf) = \tilde{g}_{\Box',\Box}(\Bf)^{-1}$ to see that
		\[
		\overline{F}^\lambda_{\lambda/\mu}(\Bf) = \prod_{\Box\in\lambda/\mu} \widetilde{F}^\lambda_{\Box}(\Bf)
		\]
		and therefore 
		\[
			u_{\lambda/\mu}(\bv(\bu)) = \prod_{\Box\in\lambda/\mu} \left(p_{\Box}\cdot \widetilde{F}^\lambda_{\Box}(\overline{\Bf}^{\lambda}(\bu))^{-1}\right) = \prod_{\Box\in\lambda/\mu} p_{\Box}\cdot \overline{F}^\lambda_{\lambda/\mu}(\overline{\Bf}^{\lambda}(\bu))^{-1} = u_{\lambda/\mu}
		\]
		which gives the rest.
		Finally, it follows from Lemma \ref{lemma: box iso bethe} that 
		\begin{align*}
			\widetilde{\mathbf{Be}} &\longrightarrow \mathbf{Be}\\
			\bv &\longmapsto \widetilde{\Bf}^\lambda(\bv) 
		\end{align*}
		is a bijection with inverse $\Bf\mapsto \widetilde{\bv}^\lambda(\Bf)$ and for the unique $\bv$ with $\nu(v_\Box) > 0$ one has $\nu\left(\widetilde{\B}^\lambda_{(i,j)}(\bv)+it_1+jt_2\right)>0$. For the converse observe that for any $\Bf = (\B_\Box)_{\Box\in\lambda}$ with $\nu\left(\B_{(i,j)}+it_1+jt_2\right)>0$ one has $\nu(v_{\Box}(\Bf)) = 1-\nu(\widetilde{F}^\lambda_{\Box}(\Bf)) = 1>0$ and therefore $\Bf$ is also unique.\\
		It remains to prove the formula in \eqref{cond: closedform_multi} for $\Bf^\lambda$ as in \eqref{cond: admissible_multi}, \eqref{cond: recursion_multi_box} and \eqref{cond: recursion_multi_S}. For this we note that the characterization \eqref{cond: recursion_multi_box} can be seen as an inversion of power series. As $\widetilde{F}^\lambda_\Box(\widetilde{\Bf}^\lambda(\mathbf{0}))\neq 0$ we can use multivariate Lagrange inversion in the shape of \cite[Theorem A]{Goodygoodgood} to conclude that for any tuple of natural numbers $\bn = (n_\Box)_{\Box\in\lambda}$ we have
		\begin{align*}
			[\bp^{\bn}] \B^\lambda_{\Box}(\bp)& = [\bv^{\bn}]\left( \widetilde{\Bf}^\lambda_\Box(\bv)\prod_{\Box'\in\lambda} \widetilde{F}^\lambda_{\Box'}(\widetilde{\Bf}^{\lambda}(\bv))^{-n_{\Box'}} \detty{\delta_{\Box',\Box''}+v_{\Box'}\frac{\partial \widetilde{F}^\lambda_{\Box''}(\widetilde{\Bf}^{\lambda}(\bv))/\partial v_{\Box'}}{\widetilde{F}^\lambda_{\Box''}(\widetilde{\Bf}^{\lambda}(\bv))}}\right)\\
			&= [\bv^{\bn-\mathbf{1}}]\left( \widetilde{\Bf}^\lambda_\Box(\bv)\prod_{\Box'\in\lambda} \widetilde{F}^\lambda_{\Box'}(\widetilde{\Bf}^{\lambda}(\bv))^{-n_{\Box'}} \detty{\frac{\partial F_{\Box''}(\widetilde{\Bf}^{\lambda}(\bv))/\partial v_{\Box'}}{F_{\Box''}(\widetilde{\Bf}^{\lambda}(\bv))}}\right)\\
			&= [\bv^{-\mathbf{1}}]\left( \widetilde{\Bf}^\lambda_\Box(\bv)\prod_{\Box'\in\lambda} F^\lambda_{\Box'}(\widetilde{\Bf}^{\lambda}(\bv))^{-n_{\Box'}} \detty{\frac{\partial F_{\Box''}(\widetilde{\Bf}^{\lambda}(\bv))/\partial v_{\Box'}}{F_{\Box''}(\widetilde{\Bf}^{\lambda}(\bv))}}\right)
		\end{align*}
		where we used $v_\Box\cdot \widetilde{F}^\lambda_{\Box}(\widetilde{\Bf}^{\lambda}(\bv)) = F^\lambda_{\Box}(\widetilde{\Bf}^{\lambda}(\bv))$ in the second and third equality which follows from Lemma \ref{lemma: box iso bethe}. After summing over $\bn$ we get the claimed formula.
		\end{proof}
	
	\subsection{Proof of Theorem \ref{thm: structurethm}\eqref{conj: poles}}\label{sect: Pf of pole shit}
		For the rest of this section we will fix an embedding $\QQ(t_1,t_2) \hookrightarrow \BC$. Let $\bp = (p_1,\ldots,p_d)$ to be a tuple of free variables (without repetitions). We then take $\Bf^\lambda(\bp)$ to be the multivariate Bethe roots described in Theorem \ref{thm: multivariablebetheunique} where $\bp$ is re-indexed by the boxes of $\lambda$ in an arbitrary way.
		Our first step is to show:
		\begin{lemma}\label{lemma: betheholopole}
			The coordinates $\Bf^\lambda_\Box(\bp)\in \BC [[\bp]]$ of the multivariate Bethe roots $\Bf^\lambda(\bp)$ are holomorphic near the origin and can be locally analytically continued to any point in the complement of 
			\[
				X = \Set{ \bp = (p_i)_i\in \BC^d | \text{there exists } \emptyset\neq S\subset \Set{1,\ldots,d} \text{ so that } \prod_{i\in S} (-p_i) = 1}
			\]
		\end{lemma}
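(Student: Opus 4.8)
The plan is to deduce both assertions from the recursive characterization in Theorem~\ref{thm: multivariablebetheunique}\eqref{cond: recursion_multi_box}, which presents $\Bf^\lambda(\bp) = \widetilde{\Bf}^\lambda(\bv^\lambda(\bp))$ in terms of the auxiliary tuple $\bv^\lambda$ solving $p_\Box = v_\Box\,\widetilde{F}^\lambda_\Box(\widetilde{\Bf}^\lambda(\bv))$. Consider the holomorphic map $H(\bv) = \big(v_\Box\,\widetilde{F}^\lambda_\Box(\widetilde{\Bf}^\lambda(\bv))\big)_{\Box\in\lambda}$, defined on the open set $\CV\subset\BC^d$ where $\widetilde{\Bf}^\lambda(\bv)$ avoids the polar locus of the $\widetilde{F}^\lambda_\Box$. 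Since $\widetilde{F}^\lambda_\Box(\widetilde{\Bf}^\lambda(\mathbf 0))\neq 0$, one computes $dH|_{\mathbf 0} = \mathrm{diag}\big(\widetilde{F}^\lambda_\Box(\widetilde{\Bf}^\lambda(\mathbf 0))\big)$, which is invertible; by the holomorphic inverse function theorem $H$ is a biholomorphism near $\mathbf 0$, so its inverse $\bv^\lambda(\bp)$ — and hence $\Bf^\lambda(\bp) = \widetilde{\Bf}^\lambda(\bv^\lambda(\bp))$ — is holomorphic near the origin. This gives the first assertion.

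For the analytic continuation I would continue the germ $\bv^\lambda$ along paths $\gamma\colon[0,1]\to\BC^d\setminus X$ starting near $\mathbf 0$. As $H$ is a local biholomorphism at every $\bv\in\CV$ with $\det dH(\bv)\neq 0$, the inverse germ continues along $\gamma$ unless, as $s\to s^*$, the lifted point $\bv^\lambda(\gamma(s))$ either (i) escapes to infinity (equivalently some root $\B^\lambda_\Box\to\infty$), (ii) leaves $\CV$ because $\widetilde{\Bf}^\lambda(\bv)$ hits the non-admissible boundary where a factor of some $\widetilde{F}^\lambda_\Box$ develops a pole, or (iii) converges to a point of $\CV$ at which $\det dH$ vanishes (a branch point). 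The core of the argument is that (i) and (ii) can only occur when $\gamma(s^*)\in X$, which contradicts $\gamma$ avoiding $X$; branch points (iii) form a proper analytic subset and are circumvented by a small perturbation of the path keeping $\gamma(s^*)\notin X$.

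The clean case, which explains the shape of $X$, is escape to infinity. Suppose a nonempty subset $S\subset\lambda$ of the roots tends to infinity with mutually bounded differences while the remaining roots stay bounded. Then $f_\Box = \B_\Box/(t_1+t_2-\B_\Box)\to -1$ for $\Box\in S$; the internal factors cancel in pairs because $g_{\Box,\Box'}\,g_{\Box',\Box} = 1$; and each external factor $g_{\Box',\Box}$ with $\Box'\notin S$, $\Box\in S$ tends to $1$ as its argument grows, since the total degree $\sum_{(a,b)\neq(0,0),\,c}(-1)^{a+b+c}$ vanishes. Hence $\prod_{\Box\in S} p_\Box = \prod_{\Box\in S} F_\Box(\Bf) \to (-1)^{|S|}$, that is $\prod_{\Box\in S}(-p_\Box)\to 1$, so $\gamma(s^*)\in X$. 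A general escape decomposes into finitely many such maximal bounded-difference clusters, each producing a defining relation of $X$, so (i) always lands in $X$.

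The main obstacle is case (ii): a finite boundary degeneration in which some $\widetilde{\Bf}^\lambda_\Box(\bv)$ approaches a value with $\B_\Box = t_1+t_2$ or a difference $\B_\Box-\B_{\Box'}\in\{t_1,t_2,t_1+t_2\}$ while all $p_\Box$ stay finite. I expect to handle this by a string/clustering analysis: a pole of one factor of $F_\Box$ can leave $p_\Box = F_\Box(\Bf)$ finite only if it is matched by a compensating zero from a neighbouring collision, and iterating this matching organises the degenerating roots into chains whose endpoints again force a relation $\prod_{\Box\in S}(-p_\Box) = 1$; together with Theorem~\ref{conj: allbethes} (finiteness of the fully admissible solution set, yielding properness of the fibres) this shows the degeneration locus over $\BC^d\setminus X$ is empty. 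An alternative, and perhaps cleaner, route is to use the residue presentation of Theorem~\ref{thm: multivariablebetheunique}\eqref{cond: closedform_multi}, writing $\B^\lambda_\Box(\bp)$ as an iterated $\bv$-contour integral whose only $\bp$-dependence is through $\prod_{\Box'}\big(1 - p_{\Box'}/F_{\Box'}(\widetilde{\Bf}^\lambda(\bv))\big)^{-1}$; continuation in $\bp$ then fails exactly at pinch configurations of the integration cycle, and one verifies that the pinch condition for a subset $S$ reduces precisely to $\prod_{\Box\in S}(-p_\Box) = 1$.
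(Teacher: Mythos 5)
Your first assertion and your escape-to-infinity case (i) match the paper: the identity
\[
\prod_{i\in S} p_i \;=\; \prod_{i\in S} f_i(\Bf)\prod_{\substack{i\in S\\ j\notin S}} g_{j,i}(\Bf)\;\longrightarrow\;(-1)^{\lvert S\rvert},
\]
obtained by cancelling the internal factors via $g_{j,i}=g_{i,j}^{-1}$, is exactly the computation in the paper's proof (the paper even takes $S$ to be the full set of diverging coordinates, with no need for your bounded-difference clustering, since the internal cancellation is exact rather than asymptotic). The gap is in your cases (ii) and (iii). For (ii) you explicitly defer to a ``string/clustering analysis'' or a pinch analysis of the contour in \eqref{cond: closedform_multi}, neither of which is carried out; as written this is the unproved core of your argument. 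For (iii), perturbing the path only avoids branch points lying in the interior of the path; it does not produce a continuation to a target point that itself lies in the branch locus, which is what the lemma (and its later use, holomorphy of $P_\lambda$ on all of $(\BC^*)^d\setminus X$) requires.

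The paper's proof shows that neither difficulty needs to be confronted, because it does not lift paths at all. It considers the locally closed subset $Z\subset\BC^d\times\BC^d$ of pairs $(\Bf,\bp)$ with $\Bf$ admissible, the Bethe equations satisfied and the Jacobian invertible; $Z$ is smooth of dimension $d$, and since the power-series Bethe roots furnish local sections, $\pi_2\colon Z\to\BC^d$ is dominant, hence finite, flat and unramified --- a holomorphic covering --- over the complement of a proper algebraic subset $Y\subset\BC^d$. Continuation over $\BC^d\setminus Y$ is then automatic, and continuation across $Y\setminus X$ follows from Riemann's extension theorem, for which one only needs local boundedness of the branches, i.e.\ only your case (i) must be excluded. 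Whether the limiting values remain admissible, or whether the Jacobian degenerates there, is irrelevant to the conclusion, so your ``main obstacle'' (ii) and the branch-point issue (iii) are artifacts of the path-lifting framework. To complete your argument you would either have to carry out the clustering/pinch analysis in full, or import the same two ingredients (generic finiteness of the solution variety and Riemann extension), at which point you have reproduced the paper's proof.
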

		\begin{proof}
			We consider the subset $Z\subset \BC^d\times \BC^d$ consisting of $(\Bf,\bp)$ so that $\Bf$ is admissible, the multivariate Bethe equations
			\[
				p_i = F_i(\Bf)
			\]
			are satisfied and so that the Jacobian $\left(\frac{\partial F_i(\Bf)}{\partial \B_j}\right)_{i,j}$ is invertible. As a result, $Z$ is smooth of dimension $d$. Since the multivariate Bethe roots come as inversions of convergent power series they themselves converge in a small enough neighborhood. Furthermore, they are admissible and have a nonvanishing Jacobian determinant on the level of power series which implies that there is an open subset of $\BC^d$ on which they all give sections of $\pi_2\colon Z\to \BC^d$. Therefore $\pi_2$ is dominant and on the complement of some big enough proper algebraic subset $Y\subset\BC^d$ it is also finite \cite[Exercise II.3.7]{Hartshorne}, flat \cite[Tag 052A]{stacksproject}, unramified (characteristic 0) and hence a holomorphic covering. All Bethe roots therefore admit local analytic continuations to any point in this complement. \\
			By Riemann's extension theorem \cite[Chapter 1]{GriffithsHarris} one now only needs to show that these stay bounded when approaching any point in the complement of $X$. For this let $\bp^{(n)}$ be a sequence of points in $\BC^d\setminus Y$ converging to a point $\bp\in\BC^d\setminus X$ and assume that each $\B^\lambda_\Box(\bp^{(n)})$ either stays bounded or diverges to $\infty$. Let $S\subset \Set{1,\ldots,d}$ be the set of indices where the latter happens. We then have
			\[
				\prod_{i\in S} p^{(n)}_i = \prod_{i\in S} f_i(\Bf^\lambda(\bp^{(n)})) \prod_{\substack{i\in S\\ j\neq i}} g_{j,i}(\Bf^\lambda(\bp^{(n)}))= \prod_{i\in S} f_i(\Bf^\lambda(\bp^{(n)})) \prod_{\substack{i\in S\\ j\not\in S}} g_{j,i}(\Bf^\lambda(\bp^{(n)}))
			\]
			
			where we used $g_{j,i}(\Bf) = g_{i,j}(\Bf)^{-1}$ in the second equality. Since the right hand side converges to $(-1)^{\lvert S\rvert}$ we must have $S=\emptyset$.
		\end{proof}
		This lets us control the poles of most of the factors appearing in \eqref{eqn: P_lambda def}. The only ones that need to be dealt with in more detail are $A$, $B_1$ and $B_2$.
		\begin{lemma}
			For any $\lambda$ the germs $A(\Bf^\lambda(\bp))$, $B_1(\Bf^\lambda(\bp))$ and $B_2(\Bf^\lambda(\bp))$ can be locally analytically continued to all of $(\BC^*)^d\setminus X$ so that the latter two have no zeros. Here $X$ is as in Lemma \ref{lemma: betheholopole}.
			\end{lemma}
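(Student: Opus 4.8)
The plan is to reduce everything to the continuation of the roots furnished by Lemma \ref{lemma: betheholopole} together with Riemann's extension theorem, and then to show that the only loci where $A$, $B_1$, $B_2$ could fail to continue holomorphically (or where $B_1,B_2$ could acquire zeros) all lie over $X$. First I would record that, on the open set of $(\BC^*)^d$ where the continued roots $\Bf^\lambda(\bp)$ are admissible and have pairwise distinct entries, all three germs are automatically well behaved: every factor $\B_i$, $t_1+t_2-\B_i$ and $at_1+bt_2+\B_i-\B_j$ is finite there, the denominators of $B_1,B_2$ (and the numerators $t_1+t_2+\B_i-\B_j$ controlling their zeros) are nonzero by admissibility, and the factor $\detty{M(\Bf)}$ is finite and nonzero since $\detty{M(\Bf)}=\detty{(\partial F_j/\partial\B_i)}\cdot\prod_j F_j^{-1}$, the Jacobian being invertible on the variety $Z$ of the proof of Lemma \ref{lemma: betheholopole} and $F_j=p_j\in\BC^*$. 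Thus the entire problem is the behaviour across the \emph{wall locus} $W$ where admissibility or distinctness degenerates, i.e. where some $\B_i^\lambda=\B_j^\lambda$, some $\B_i^\lambda-\B_j^\lambda\in\{\pm t_1,\pm t_2,\pm(t_1+t_2)\}$, or some $\B_i^\lambda=t_1+t_2$.

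Next, by Riemann's extension theorem \cite[Chapter 1]{GriffithsHarris} it suffices to prove that $A$ and $B_1^{\pm1},B_2^{\pm1}$ remain \emph{bounded} as $\bp\to\bp^\ast\in(\BC^*)^d\setminus X$. Lemma \ref{lemma: betheholopole} already guarantees that the roots stay bounded off $X$ and hence converge (along subsequences) to a finite limit $\Bf^\ast$; so the only way to lose boundedness is for $\Bf^\ast$ to land on one of the finitely many walls listed above. The core of the argument is therefore to show that \emph{no wall is reached over $(\BC^*)^d\setminus X$}.

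The mechanism I would use is a clustering argument on products of Bethe equations, generalizing the divergence computation in Lemma \ref{lemma: betheholopole}. Given a limit $\Bf^\ast$ meeting a wall, I would group the indices into maximal connected clusters $S$ linked by the degenerate resonances (the differences realizing a wall). For such a cluster one forms $\prod_{i\in S}p_i=\prod_{i\in S}F_i(\Bf)$; using the identity $g_{j,i}(\Bf)=g_{i,j}(\Bf)^{-1}$ the intra-cluster factors cancel \emph{identically}, leaving $\prod_{i\in S}f_i(\Bf)\cdot\prod_{i\in S,\,k\notin S}g_{k,i}(\Bf)$, a product whose limit is finite because the cross-indices $k\notin S$ keep their roots away from the walls. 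The claim to establish is that the rigid lattice structure of the resonances inside $S$ forces this limit to satisfy a relation of the form $\prod_{i\in S}(-p_i^\ast)=1$, putting $\bp^\ast\in X$ and giving the desired contradiction. This is exactly the finite-wall analogue of the computation in Lemma \ref{lemma: betheholopole}, where $S$ was the set of roots escaping to $\infty$ and each $f_i\to-1$, $g_{k,i}\to 1$ produced $\prod_{i\in S}p_i\to(-1)^{|S|}$. Once this is in hand, $A$ and $B_1^{\pm1},B_2^{\pm1}$ are bounded on approach to every point of $(\BC^*)^d\setminus X$, so Riemann's theorem produces the holomorphic continuations, with $B_1,B_2$ nowhere vanishing (as both they and their inverses extend).

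The main obstacle is precisely this last step: controlling which limiting resonance configurations can actually arise as degenerations of the $\lambda$-Bethe roots, and showing that each forces $\prod_{i\in S}(-p_i^\ast)=1$. Naive isolated coincidences do not by themselves produce such a relation, so the argument must use that the only degenerations compatible with the explicit recursive structure of $\Bf^\lambda(\bp)$ (Theorem \ref{thm: multivariablebetheunique}) are full resonance strings, together with the combinatorial identities of Section \ref{sect: combi} to evaluate the surviving $f_i$- and $g_{k,i}$-factors. Handling the collision walls $\B_i^\lambda=\B_j^\lambda$ relevant to $A$ is the most delicate case, since there one must combine the clustering relation with the vanishing of $\detty{(\partial F_j/\partial\B_i)}$ along the ramification of $\pi_2\colon Z\to\BC^d$ to rule out a genuine singularity off $X$.
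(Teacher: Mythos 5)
Your strategy stands or falls on the claim that the continued roots $\Bf^\lambda(\bp)$ stay admissible and pairwise distinct over all of $(\BC^*)^d\setminus X$, and this claim is both unproven in your sketch and, as far as the paper is concerned, neither needed nor true: the paper's own argument is explicitly built to handle limit points $\bp_0\in(\BC^*)^d\setminus X$ at which $\B_i^\lambda(\bp_0)=0$ or $t_1+t_2$ or at which differences $\B_i^\lambda(\bp_0)-\B_j^\lambda(\bp_0)$ land in $\BZ t_1+\BZ t_2$. Your proposed mechanism for ruling out such walls does not transfer from the divergence computation of Lemma \ref{lemma: betheholopole}: there, $\prod_{i\in S}p_i\to(-1)^{|S|}$ because each $f_i\to -1$ and each cross-cluster $g_{k,i}\to 1$ \emph{as the roots escape to infinity}; for a finite resonance cluster the analogous limit $\prod_{i\in S}f_i(\Bf^\ast)\prod_{k\notin S}g_{k,i}(\Bf^\ast)$ is just some finite number with no reason to equal $(-1)^{|S|}$, so no relation $\prod_{i\in S}(-p_i^\ast)=1$ and hence no contradiction is produced. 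You flag this yourself as "the main obstacle," but without it the whole proof collapses, including the treatment of $\detty{M(\Bf)}$ at ramification points of $\pi_2\colon Z\to\BC^d$.

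The paper circumvents the issue entirely, by different means for each function. For $A$: using the factorization from the proof of Theorem \ref{thm: mainthmproof}, the only possible denominator factors of $A(\Bf)$ are of the form $at_1+bt_2+(-1)^c(\B_{(i,j)}-\B_{(k,l)})$ subject to a combinatorial constraint on the boxes that is not invariant under reindexing; since $A$ is symmetric in the $\B_i$, no such factor can occur, so $A(\Bf)\in\QQ(t_1,t_2)[\B_1,\ldots,\B_d]$ is a polynomial and Lemma \ref{lemma: betheholopole} alone gives the continuation. For $B_1$ (and $B_2$): one groups indices by the equivalence $\B_i^\lambda(\bp_0)-\B_j^\lambda(\bp_0)\in\BZ t_1+\BZ t_2$, chooses an integer vector $\iota$ adapted to this resonance pattern, and rewrites $\prod_i p_i^{\iota(i)}B_1(\bp)$ via the Bethe equations as a product in which every potentially vanishing or diverging factor appears with exponent zero; since $\prod_i p_i^{\iota(i)}$ is a unit on $(\BC^*)^d$, this gives holomorphy and nonvanishing of $B_1$ at $\bp_0$ directly, with no need for Riemann extension or for admissibility at the limit.
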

		\begin{proof}
			Viewing $A(\Bf)$ purely as an element of $\QQ(t_1,t_2,\B_1,\ldots,\B_d)$ it was observed in the proof of Theorem \ref{thm: mainthmproof} that for any partition $\lambda$ and any re-indexing of $\Bf$ by the boxes of $\lambda$ one can write
			\begin{align*}
				A(\Bf) = &\prod_{0\neq (i,j)\in \lambda} \B_{i,j} \cdot \prod_{(i,j)\in\lambda} (t_1+t_2-\B_{i,j}) \cdot \prod_{\substack{(i,j),(k,l)\in\lambda\\0\leq a,b\leq 1\\(k,l)\neq (i+a,j+b)}} (at_1+bt_2 + \B_{k,l}-\B_{i,j})^{(-1)^{a+b+1}}\\ &\cdot\prod_{\lambda/\mu} u_{\lambda/\mu}(\widetilde{\bv}^\lambda(\Bf))\cdot \begin{Vmatrix}
					M_{\text{skew}}(\bu(\widetilde{\bv}^\lambda (\Bf)))
				\end{Vmatrix}
			\end{align*}
			with notation as in Lemma \ref{prop: bethematrices}. It follows from this that the denominator of $A(\Bf)$ can therefore consist only of products of expressions of the shape $at_1+bt_2+(-1)^c(\B_{(i,j)}-\B_{(k,l)})$ where $(i,j)\neq (k+(-1)^c a,l+(-1)^c b)$. Note that these factors heavily depend on the indexing of the $\B_{(i,j)}$, however $A(\Bf)$ is symmetric in $\Bf$ and so the set of possible factors in the denominator also has to be invariant under index change. Indeed, this excludes all factors and as a result $A(\Bf)$ must actually be in $\QQ(t_1,t_2)[\B_1,\ldots,\B_d]$. Lemma \ref{lemma: betheholopole} now implies the first part of the claim.\\
			For the rest we will only examine $B_1$ as $B_2$ is similar. The function $B_1(\bp) \coloneqq B_1(\Bf^\lambda(\bp))$ is certainly meromorphic. We aim to show that it extends holomorphically to any point $\bp_0\in (\BC^*)^d\setminus X$ and is not zero there. For this we define an equivalence relation on $\Set{1,\ldots,d}$ by
			\[
				i\sim j\text{ if and only if } \B_i^\lambda(\bp_0) -\B_j^\lambda(\bp_0) \in \BZ\cdot t_1 + \BZ\cdot t_2.
			\] 
			Let $S_0,S_1,\ldots,S_n$ be the equivalence classes so that $S_0$ is the set of $i$ with $\B_i^\lambda(\bp_0)\in \BZ\cdot t_1 + \BZ\cdot t_2$ if there are such $i$ and otherwise we artificially set $S_0 = \emptyset$. We further choose elements $s_1\in S_1,s_2\in S_2,\ldots,s_n\in S_n$ and take $\iota\colon\Set{1,\ldots,d}\to \BZ$ to be the map so that for $i\in S_0$ one has 
			\[
				\B_i^\lambda(\bp_0)+\iota(i)t_1\in \BZ \cdot t_2
			\]
			and for any $i\in S_j$ with $j>0$ we need
			\[
				\B_i^\lambda(\bp_0)-\B_{s_j}^\lambda(\bp_0) +\iota(i) t_1 \in \BZ\cdot t_2.
			\] 
			For $\bp$ in a dense open subset of $\BC^d$ one now has:
			\begin{align*}
				\prod_{i=1}^d p_i^{\iota(i)} B_1(\bp) &= \prod_{i=1}^d F_i(\Bf^\lambda(\bp))^{\iota(i)} B_1(\bp)\\& = \pm\prod_{i=1}^d \frac{\B_i^\lambda(\bp)^{\iota(i)}}{(t_1+t_2-\B^\lambda_i(\bp))^{\iota(i)+1}}\\
				&\cdot \prod_{\substack{i\neq j\\0\leq a,b\leq 1}}\left(at_1+bt_2+\B^\lambda_j(\bp)-\B^\lambda_i(\bp)\right)^{(\iota(i)-\iota(j)+a)(-1)^{a+b}}.
			\end{align*}
			As a result, none of the factors in the second product are zero at $\bp_0$. The factors in the first product may only vanish for $i\in S_0$ so that $\B_i^\lambda(\bp_0)=0$ or $\B_i^\lambda(\bp_0)=t_1+t_2$. However, in the first case we have $\iota(i)=0$ and $\iota(i)=-1$ in the second - in each case the vanishing factor is removed. Therefore the whole product is holomorphic and non-vanishing near $\bp_0$.
		\end{proof}
		In case our local curve has genus at least $1$ it now follows from \eqref{eqn: P_lambda def} and the previous two Lemmas that $P_\lambda$ is holomorphic on $(\BC^*)^d\setminus X$ and the restriction along $p_\Box= p$ can only have poles at $p=0$ or where $-p$ is an $n$-th root of unity for $1\leq n \leq d$. This finishes the proof of Theorem \ref{thm: structurethm}\eqref{conj: poles} in this case. In order to similarly deduce the $g=0$ case we would need to know that $A(\Bf^\lambda(\bp))$ also never vanishes on $(\BC^*)^d\setminus X$, but it is not clear to us how to show this. However, since $\BP^1$ has only even cohomology classes we can use \cite[Theorem 5]{PaPixRat}, which establishes the pole statement in that case.
		\section{Auxiliary Lemmas}\label{sect: combi}
		Let $\QQ(t_1,t_2)\subset K$ be any field extension.
		\begin{lemma}\label{lemma: box iso bethe}
			The morphism
			\begin{align*}
				\BA_K^d\supset\Set{\Bf = (\B_\Box)_{\Box\in\lambda} | \Bf \text{ is admissible}}& \longrightarrow \BA_K^d\\
				\Bf &\longmapsto \widetilde{\bv}^\lambda(\Bf)
			\end{align*}
			where $\widetilde{\bv}^\lambda(\Bf)=(\widetilde{v}^\lambda_\Box(\Bf))_{\Box\in\lambda}$ with 
			\begin{equation}\label{eqn: v(Y) defn}
			\widetilde{v}^\lambda_\Box(\Bf) = \B_{\Box}^{\delta_{\Box,0}}\prod_{\substack{0\leq a,b,c\leq 1\\(a,b)\neq (0,0)\\ \Box'\coloneqq\Box+(-1)^c (a,b)\in\lambda}} \left(at_1+bt_2+(-1)^c (\B_{\Box'}-\B_{\Box})\right)^{(-1)^{a+b+c}}
			\end{equation}
			is an open immersion with image
			\[
				\CU = \Set{\bv = (v_\Box)_{\Box\in\lambda}| \widetilde{\Bf}^\lambda(\bv) \text{ is admissible}}
			\]
			on which $\widetilde{\Bf}^\lambda$ defined as in \eqref{eqn: Ytildef} is the inverse.
		\end{lemma}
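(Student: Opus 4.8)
The plan is to show that $\widetilde{\bv}^\lambda$ and $\widetilde{\Bf}^\lambda$ are mutually inverse morphisms between the admissible locus $W=\{\Bf\text{ admissible}\}$ and $\CU$; the open immersion statement is then immediate. First I would record the easy half. The map $\widetilde{\Bf}^\lambda$ is a globally defined polynomial morphism $\BA_K^d\to\BA_K^d$, so $\CU=(\widetilde{\Bf}^\lambda)^{-1}(W)$ is open and $\widetilde{\Bf}^\lambda$ restricts to a morphism $\CU\to W$ by the very definition of $\CU$. Dually, $\widetilde{\bv}^\lambda$ is a morphism on $W$: each factor $at_1+bt_2+(-1)^c(\B_{\Box'}-\B_\Box)$ occurring with negative exponent vanishes exactly when $\B_\Box-\B_{\Box'}\in\{\pm t_1,\pm t_2,\pm(t_1+t_2)\}$, which is precisely what admissibility forbids (the extra factor $\B_\Box^{\delta_{\Box,0}}$ appears only in the numerator), so no denominator of $\widetilde{\bv}^\lambda$ vanishes on $W$.

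Next I would reduce everything to the single identity $\widetilde{\bv}^\lambda\circ\widetilde{\Bf}^\lambda=\mathrm{id}$, understood as an identity of rational functions (equivalently of morphisms on $\CU$). Once this holds, $\widetilde{\Bf}^\lambda$ must be dominant, so $\CU$ is a nonempty dense open of the irreducible $\BA_K^d$ and $\widetilde{\Bf}^\lambda$ is injective on it; hence $\widetilde{\Bf}^\lambda\circ\widetilde{\bv}^\lambda$ coincides with the identity on the dense image $\widetilde{\Bf}^\lambda(\CU)\subseteq W$. Since $\widetilde{\Bf}^\lambda$ is everywhere defined, $\widetilde{\Bf}^\lambda\circ\widetilde{\bv}^\lambda$ is a morphism on all of $W$ agreeing with $\mathrm{id}_W$ on a dense subset, hence equal to $\mathrm{id}_W$; the two maps are therefore inverse isomorphisms $W\cong\CU$, which is exactly the assertion of the lemma. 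Thus the only real content is the one composite identity.

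The core computation is $\widetilde{v}^\lambda_\Box(\widetilde{\Bf}^\lambda(\bv))=v_\Box$. Setting $\Phi_{(i,j)}:=\widetilde{\B}^\lambda_{(i,j)}(\bv)+it_1+jt_2=\sum_{\lambda/\mu\ni(i,j)}\prod_{\Box'\in\lambda/\mu}v_{\Box'}$ as in \eqref{eqn: Ytildef}, the substitution $\B_\Box=\widetilde{\B}^\lambda_\Box(\bv)$ cancels all the $-it_1-jt_2$ terms and turns each factor into a difference of these generating functions, namely $at_1+bt_2+\B_{\Box+(a,b)}-\B_\Box=\Phi_{\Box+(a,b)}-\Phi_\Box$ and $at_1+bt_2+\B_\Box-\B_{\Box-(a,b)}=\Phi_\Box-\Phi_{\Box-(a,b)}$. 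Sorting the six terms by the sign $(-1)^{a+b+c}$ of their exponent, the identity to prove becomes
\[
\Phi_\Box^{\delta_{\Box,0}}\cdot\frac{\big(\Phi_{\Box+(1,1)}-\Phi_\Box\big)\big(\Phi_\Box-\Phi_{\Box-(1,0)}\big)\big(\Phi_\Box-\Phi_{\Box-(0,1)}\big)}{\big(\Phi_{\Box+(1,0)}-\Phi_\Box\big)\big(\Phi_{\Box+(0,1)}-\Phi_\Box\big)\big(\Phi_\Box-\Phi_{\Box-(1,1)}\big)}=v_\Box,
\]
where any factor whose box lies outside $\lambda$ is omitted. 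The structural input is that connected skew shapes are upward closed, so for $\Box\le\Box'$ the difference $\Phi_{\Box'}-\Phi_\Box$ enumerates precisely the connected skew shapes containing $\Box'$ but not $\Box$. I would prove the displayed identity by induction on $|\lambda|$, peeling off a removable corner $\Box_0=(p,q)$ with $(p+1,q),(p,q+1)\notin\lambda$: every connected skew shape either avoids $\Box_0$ (and is then one of $\lambda\setminus\{\Box_0\}$) or contains it as a leaf attached only to $(p-1,q),(p,q-1)$, and one tracks how this splitting propagates through each $\Phi_\Box$.

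\textbf{Main obstacle.} The $t$-cancellation, the reduction to the $\Phi$'s, and the scheme-theoretic bookkeeping of the first two paragraphs are all routine. The real work is the displayed combinatorial identity: the cases $\lambda=(2),(2,1),(2,2)$ already show that each difference $\Phi_{\Box'}-\Phi_\Box$ factors as a monomial times a product of cofactors $(1+\cdots)$, and that all these cofactors and surplus monomials cancel in the alternating product, leaving a single $v_\Box$. I expect the crux to be isolating and proving the correct factorization of $\Phi_{\Box'}-\Phi_\Box$ that makes this telescoping transparent, equivalently making the corner induction run uniformly across interior boxes, edge boxes, corners, and the distinguished box $(0,0)$ where $\delta_{\Box,0}$ supplies the extra numerator factor.
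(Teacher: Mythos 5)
Your reduction of the lemma to a single composite identity is sound, and it is worth noting that you chose the opposite composite from the paper: you aim to verify $\widetilde{\bv}^\lambda(\widetilde{\Bf}^\lambda(\bv))=\bv$ and then propagate this to the other direction by density, whereas the paper proves $\widetilde{\Bf}^\lambda(\widetilde{\bv}^\lambda(\Bf))=\Bf$ and then invokes \'etale $+$ monomorphism $\Rightarrow$ open immersion. Either reduction is legitimate (modulo small loose ends in yours: you should actually verify that $\CU\neq\emptyset$ --- note $\bv=\mathbf{0}$ is \emph{not} in $\CU$ since $\widetilde{\B}^\lambda_{(0,0)}(\mathbf{0})=0$ is inadmissible --- and your dominance claim should be derived from injectivity on a nonempty open of $\BA_K^d$, not the other way around). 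Your translation of the factors into differences of the generating functions $\Phi_{(i,j)}=\sum_{\lambda/\mu\ni(i,j)}\prod_{\Box'\in\lambda/\mu}v_{\Box'}$ is correct, as is the observation that $\Phi_{\Box'}-\Phi_\Box$ for $\Box\le\Box'$ enumerates the connected skew shapes containing $\Box'$ but not $\Box$.

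The genuine gap is that the displayed combinatorial identity --- which is the entire content of the lemma --- is not proved; you state it, sketch an induction, and explicitly defer the crux. The paper spends essentially all of its proof on the analogous identity \eqref{eqn: Idonno}, via an induction that removes the whole first row of $\lambda$ and runs a nontrivial telescoping of sums over skew shapes; there is no shortcut here. Moreover, the specific induction you sketch has a flaw as stated: a connected skew shape of $\lambda$ containing a removable corner $\Box_0$ does \emph{not} in general restrict to a connected skew shape of $\lambda\setminus\{\Box_0\}$ after deleting $\Box_0$ (for $\lambda=(2,2)$ and $\Box_0=(1,1)$, the shape $\{(0,1),(1,0),(1,1)\}$ becomes disconnected), so the "leaf" decomposition you describe does not give the clean recursion you need, and the bookkeeping of how the splitting "propagates through each $\Phi_\Box$" is precisely the hard part. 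Until that identity is established --- by your corner induction made to work, by the paper's row-removal telescoping, or otherwise --- the proof is incomplete.
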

		\begin{proof}
		It suffices to show  $\widetilde{\Bf}^\lambda(\widetilde{\bv}^\lambda(\Bf))=\Bf$. Indeed, this would imply that the Jacobian of $\Bf\mapsto \widetilde{\bv}^\lambda(\Bf)$ is invertible everywhere which makes the map étale and therefore flat. As it is also a monomorphism, \cite[Tag 06NC]{stacksproject} implies that it must be an open immersion and hence for any $\bv$ in its image $\widetilde{\Bf}^\lambda(\bv)$ must be admissible and $\widetilde{\bv}^\lambda(\widetilde{\Bf}^\lambda(\bv))=\bv$.\\
		Assume the claim has been shown for any partition of smaller size and let $\Bf = (\B_\Box)_{\Box\in\lambda}$ be an admissible tuple and $(i_0,j_0)\in \lambda$ be an arbitrary box. First, we define for any $\lambda/\mu$
		\begin{align*}
			v_{[\lambda/\mu]} \coloneqq \prod_{\Box\in \lambda/\mu} \widetilde{v}^\lambda_\Box (\Bf) = \B_{(0,0)}^{[(0,0)\in \lambda/\mu]} \prod_{\substack{\Box\in \lambda/\mu, \Box'\not\in \lambda/\mu\\0\leq a,b,c\leq 1\\ \Box'=\Box+(-1)^c (a,b)}} \left(at_1+bt_2+(-1)^c (\B_{\Box'}-\B_{\Box})\right)^{(-1)^{a+b+c}}
		\end{align*}
		where $[P]$ is defined as
		\begin{equation}\label{eqn: [P]}
			[P]\coloneqq
			\begin{cases}
				1, \text{ if } P \text{ is true}\\
				0, \text{ if } P \text{ is false}
			\end{cases}
		\end{equation}
		and we used that any factor of \eqref{eqn: v(Y) defn} occuring in the first product cancels if it involves boxes $\Box,\Box'\in \lambda/\mu$.
		We now want to prove
		\begin{equation}\label{eqn: Idonno}
		 \B_{(i_0,j_0)} \stackrel{!}{=} -i_0 t_1 -j_0 t_2 + \sum_{(i_0,j_0)\in\lambda/\mu} v_{[\lambda/\mu]}.
		\end{equation}
		The claim is trivial if $(i_0,j_0)=(0,0)$ hence we may assume without losing generality that $i_0>0$. Let $\lambda'$ be the partition $\lambda' = \lambda_1\geq \ldots \geq \lambda_{l(\lambda)-1}$ which has degree $\lvert\lambda'\rvert = \lvert \lambda\rvert-\lambda_0<\lvert\lambda\rvert$. We will identify the Young diagram of $\lambda'$ with the set of boxes $(i,j)\in\lambda$ with $i>0$. Using this identification we denote $\Bf' \coloneqq (\B_\Box)_{\Box\in\lambda'}$ and
		\[
			v'_{[\lambda'/\mu']} \coloneqq \prod_{\Box\in\lambda'/\mu'} \widetilde{v}_{\Box}^{\lambda'}(\Bf').
		\]
		In this case one can express the right hand side of \eqref{eqn: Idonno} in the following way:
		\begin{align*}
			&-i_0 t_1 -j_0 t_2 + \sum_{(i_0,j_0)\in\lambda'/\mu'} \sum_{\substack{\lambda/\mu\text{ s.t.}\\(\lambda/\mu)\cap \lambda' = \lambda'/\mu'}} v_{[\lambda/\mu]}\\
			&= -i_0 t_1 -j_0 t_2 + \B_{(0,0)} \\
			&+ \sum_{(i_0,j_0)\in\lambda'/\mu'} v'_{[\lambda'/\mu']}\left(\frac{t_1+ \B_{(1,0)}-\B_{(0,0)}}{\B_{(1,0)}}\right)^{\delta_{\lambda'/\mu',\lambda'}}\prod_{\substack{l=h_{\lambda'/\mu'}\\l>0}}^{\lambda_1-1} \frac{t_1+ \B_{(1,l)}-\B_{(0,l)}}{t_1+t_2+\B_{(1,l)}-\B_{(0,l-1)}}\\
			&+ \sum_{(i_0,j_0)\in\lambda'/\mu'} v'_{[\lambda'/\mu']}\left(\frac{t_1+ \B_{(1,0)}-\B_{(0,0)}}{\B_{(1,0)}}\right)^{\delta_{\lambda'/\mu',\lambda'}}\sum_{\substack{j=h_{\lambda'/\mu'}\\j>0}}^{\lambda_1-1}\frac{t_2+\B_{(0,j)}-\B_{(0,j-1)}}{t_1+t_2+\B_{(1,j)}-\B_{(0,j-1)}}\\&\times\prod_{\substack{l=h_{\lambda'/\mu'}\\l>0}}^{j-1} \frac{t_1+ \B_{(1,l)}-\B_{(0,l)}}{t_1+t_2+\B_{(1,l)}-\B_{(0,l-1)}}\\
			&= -i_0 t_1 -j_0 t_2 + \B_{(0,0)} \\
			&+ \sum_{(i_0,j_0)\in\lambda'/\mu'} v'_{[\lambda'/\mu']}\left(\frac{t_1+ \B_{(1,0)}-\B_{(0,0)}}{\B_{(1,0)}}\right)^{\delta_{\lambda'/\mu',\lambda'}}\prod_{\substack{l=h_{\lambda'/\mu'}\\l>0}}^{\lambda_1-1} \frac{t_1+ \B_{(1,l)}-\B_{(0,l)}}{t_1+t_2+\B_{(1,l)}-\B_{(0,l-1)}}\\
			&+ \sum_{(i_0,j_0)\in\lambda'/\mu'} v'_{[\lambda'/\mu']}\left(\frac{t_1+ \B_{(1,0)}-\B_{(0,0)}}{\B_{(1,0)}}\right)^{\delta_{\lambda'/\mu',\lambda'}}\sum_{\substack{j=h_{\lambda'/\mu'}\\j>0}}^{\lambda_1-1}\left(1-\frac{t_1+\B_{(1,j)}-\B_{(0,j)}}{t_1+t_2+\B_{(1,j)}-\B_{(0,j-1)}}\right)\\ &\times\prod_{\substack{l=h_{\lambda'/\mu'}\\l>0}}^{j-1} \frac{t_1+ \B_{(1,l)}-\B_{(0,l)}}{t_1+t_2+\B_{(1,l)}-\B_{(0,l-1)}}\\
			&= -i_0 t_1 -j_0 t_2 + \B_{(0,0)}+ \sum_{(i_0,j_0)\in\lambda'/\mu'\neq\lambda'} v'_{[\lambda'/\mu']} + t_1+ \B_{(1,0)}-\B_{(0,0)}\\
			&= -(i_0-1) t_1 -j_0t_2 + \sum_{(i_0,j_0)\in\lambda'/\mu'} v'_{[\lambda'/\mu']}
		\end{align*}
		where we set
		\[
		h_{\lambda'/\mu'} \coloneqq \min\Set{j| (1,j)\in \lambda'/\mu'}\in\BN_0\cup\Set{\infty}.
		\]
		 The claim now follows by induction on $\lvert\lambda\rvert$.
	\end{proof}
	We now determine the Jacobian determinant of the above bijection. This comes up in the proof of Theorem \ref{thm: mainthmproof}.
	\begin{lemma}\label{lemma: bethedet}
		For any admissible $\Bf$ as above, the Jacobian matrix of the above map i.e. 
		\[
			M = \left(\frac{\partial \widetilde{v}_\Box^\lambda(\Bf)}{\partial \B_{\Box'}}\right)_{\Box,\Box'\in\lambda}
		\]
		has determinant
		\[
			\prod_{\substack{(i,j)\neq (k,l)\in\lambda\\ 0\leq a,b\leq 1\\ (k,l) = (i+a,j+b)}} (at_1+bt_2+\B_{k,l}-\B_{i,j})^{(-1)^{a+b}}.	
		\]
	\end{lemma}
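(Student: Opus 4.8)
The plan is to reduce the Jacobian to the determinant of a weighted graph Laplacian and then to evaluate the latter. First I would pass to the shifted coordinates $w_{(i,j)} \coloneqq \B_{(i,j)} + it_1 + jt_2$, a change of variables of Jacobian $1$ under which every factor $at_1 + bt_2 + \B_{(k,l)} - \B_{(i,j)}$ occurring in \eqref{eqn: v(Y) defn} or in the claimed product (where $(k,l) = (i+a,j+b)$) becomes simply the difference $w_{(k,l)} - w_{(i,j)}$. Since each $\widetilde v^\lambda_\Box$ is a monomial in such linear forms times $\B_\Box^{\delta_{\Box,0}}$, I would take logarithmic derivatives and factor $M = \operatorname{diag}\bigl(\widetilde v^\lambda_\Box\bigr)_\Box \cdot \widetilde M$, where $\widetilde M_{\Box,\Box'} = \partial \log \widetilde v^\lambda_\Box / \partial \B_{\Box'}$, so that $\detty{M} = \bigl(\prod_\Box \widetilde v^\lambda_\Box\bigr)\,\detty{\widetilde M}$. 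A short computation shows $\prod_\Box \widetilde v^\lambda_\Box = \B_{(0,0)}$: for each unordered pair of adjacent boxes $\{P,Q\}$ with $Q = P + (a,b)$, the factor contributed by $P$ (with $c=0$) and the one contributed by $Q$ (with $c=1$) are mutually inverse, so all edge factors cancel and only $\B_{(0,0)}^{\delta}$ survives.

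Next I would identify $\widetilde M$ as a grounded Laplacian. Because every factor of $\widetilde v^\lambda_\Box$ other than $\B_\Box^{\delta_{\Box,0}}$ depends only on differences $w_{\Box'} - w_\Box$, the derivative of $\log \widetilde v^\lambda_\Box$ in the all-ones direction vanishes except for the contribution of the $\B_{(0,0)}$ factor; hence the rows of $\widetilde M$ sum to $\delta_{\Box,(0,0)}/\B_{(0,0)}$. Adding all columns of $\widetilde M$ into the column indexed by $(0,0)$ turns that column into $\B_{(0,0)}^{-1}$ times the standard basis vector at $(0,0)$; expanding the determinant along it gives $\detty{\widetilde M} = \B_{(0,0)}^{-1}\detty{\widetilde M'}$, where $\widetilde M'$ is the principal submatrix of $\widetilde M$ on the boxes $\Box \neq (0,0)$. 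Combining with the previous step yields the clean identity $\detty{M} = \detty{\widetilde M'}$, and $\widetilde M'$ is exactly the reduced Laplacian, grounded at $(0,0)$, of the graph on the boxes of $\lambda$ with symmetric edge weight $c_e = -(-1)^{a+b}/(w_{(k,l)} - w_{(i,j)})$ on each edge between adjacent boxes $(i,j)$ and $(k,l) = (i+a,j+b)$.

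It then remains to evaluate this reduced Laplacian determinant and match it with the claimed product $\prod_e \bigl(w_{(k,l)} - w_{(i,j)}\bigr)^{(-1)^{a+b}}$. By the algebraic matrix-tree theorem, $\detty{\widetilde M'} = \sum_{T}\prod_{e\in T} c_e$, the sum running over spanning trees $T$ of the adjacency graph; since this graph has many cycles, the individual summands are genuine rational functions, and the content of the lemma is that this signed sum collapses to a single monomial in the differences. I would prove the collapse by induction on $\lvert\lambda\rvert$, peeling off a removable corner $\Box_0 = (i_0,j_0)$ and eliminating the corresponding vertex by a Schur complement (star--mesh transform): for $i_0,j_0\ge 1$ the vertex $\Box_0$ has exactly the three neighbours $(i_0-1,j_0)$, $(i_0,j_0-1)$, $(i_0-1,j_0-1)$, and for $\Box_0$ on the first row or column it is a leaf, so those cases are immediate. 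The pivot $\widetilde M'_{\Box_0,\Box_0}$ together with the induced couplings among the neighbours must combine, via the additivity $w_\Box - w_{\Box''} = (w_\Box - w_{\Box'}) + (w_{\Box'} - w_{\Box''})$ of differences along grid paths, to produce exactly the three edge factors incident to $\Box_0$ times the reduced Laplacian of $\lambda\setminus\{\Box_0\}$, closing the induction.

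The hard part will be precisely this last collapse. The naive expectation that eliminating $\Box_0$ returns the Laplacian of $\lambda\setminus\{\Box_0\}$ fails, since the Schur complement creates a spurious coupling between the two opposite neighbours $(i_0-1,j_0)$ and $(i_0,j_0-1)$ of $\Box_0$ that is absent in the smaller diagram; what must be shown is that this spurious edge and the accompanying diagonal corrections cancel at the level of determinants. This reduces to a partial-fraction identity of the type appearing when $\tfrac1p+\tfrac1q-\tfrac1r$ is simplified against $\bigl(\tfrac1\alpha+\tfrac1p\bigr)$ and $\bigl(\tfrac1\beta+\tfrac1q\bigr)$ with $p+\alpha = q+\beta = r$ (the relation I verified by hand for $\lambda = (2,2)$, where it yields $\detty{\widetilde M'} = r/(\alpha\beta pq)$), and the bookkeeping needed to run this uniformly over all corner configurations is the main technical obstacle. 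An alternative I would keep in reserve is a divisor-matching argument: show directly that $\detty{M}$, as a rational function of the $\B_\Box$, has simple poles exactly along the vanishing of the straight-edge factors and simple zeros exactly along the vanishing of the diagonal-edge factors, with no other zeros or poles, and pin down the remaining constant by a single specialization.
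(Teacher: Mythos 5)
Your reduction is correct and coincides with the paper's own first half: the identity $\prod_{\Box}\widetilde v^\lambda_\Box(\Bf)=\B_{(0,0)}$, the observation that the rows of the logarithmic Jacobian sum to $\delta_{\Box,(0,0)}/\B_{(0,0)}$ (the paper phrases this as $(1,\dots,1)$ lying in the kernel after removing that term), the resulting equality of $\detty{M}$ with the reduced determinant on the boxes $\Box\neq(0,0)$, the identification of that minor with a grounded Laplacian of the triangulated grid graph, and the appeal to the weighted matrix-tree theorem are all exactly what the paper does. The passage to shifted variables $w_{(i,j)}=\B_{(i,j)}+it_1+jt_2$ is a pleasant cosmetic simplification.

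The genuine gap is the step you yourself flag as ``the hard part'': the collapse of the spanning-tree sum to a single monomial is the entire content of the lemma, and your proposal does not prove it. The corner-peeling Schur complement runs into exactly the obstruction you name -- eliminating a corner $(i_0,j_0)$ with $i_0,j_0\ge 1$ produces a spurious anti-diagonal coupling between $(i_0-1,j_0)$ and $(i_0,j_0-1)$ and perturbs the weights of the two incident straight edges, so the resulting matrix is no longer the reduced Laplacian of $\Gamma_{\lambda\setminus\{(i_0,j_0)\}}$ and the induction hypothesis does not apply; one would have to strengthen the inductive statement to a larger class of decorated graphs, and the required cancellation is precisely the unproven identity. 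Your fallback divisor-matching argument has the same circularity: the claimed simple zeros of $\detty{M}$ along the diagonal-edge loci $w_{(i+1,j+1)}=w_{(i,j)}$ occur where individual Laplacian entries blow up, so establishing that the determinant vanishes there already requires the cancellation you are trying to avoid. The paper's mechanism is different and global: it fixes an ordering of the \emph{upper} triangles $(i,j),(i,j+1),(i+1,j+1)$ (which are pairwise edge-disjoint, unlike the full set of triangles), defines an order-$3$ permutation $\sigma$ on spanning trees by rotating the two tree-edges of the first upper triangle met twice by $T$, and uses the relation $w_{e_1}w_{e_2}+w_{e_2}w_{e_3}+w_{e_3}w_{e_1}=0$ (equivalently $w_{e_1}^{-1}+w_{e_2}^{-1}+w_{e_3}^{-1}=0$ in your shifted coordinates) to cancel the orbits of size $3$; the surviving trees are then characterized explicitly (one edge per upper triangle, no diagonal edges, all edges outside upper triangles) and summed via the same relation. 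Some such global cancellation device, or a completed version of your Schur-complement bookkeeping, is needed before the argument is a proof.
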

	\begin{proof}
		We have 
		\[
			\begin{Vmatrix}
				M
			\end{Vmatrix} 
			= \B_{0,0} \begin{Vmatrix}
				\left(\frac{\partial \widetilde{v}^\lambda_\Box(\Bf) / \partial \B_{\Box'} }{\widetilde{v}^\lambda_\Box(\B)}\right)_{\Box,\Box'\in\lambda}
			\end{Vmatrix}
		\]
		and furthermore
		\[
			\frac{\partial \widetilde{v}^\lambda_\Box(\Bf) / \partial \B_{\Box'} }{\widetilde{v}^\lambda_\Box(\Bf)} = \frac{\delta_{\Box,0}\delta_{\Box',0}}{\B_{0,0}} + \sum_{\substack{\Box''\neq\Box \text{ s.t.}\\ \exists 0\leq a,b,c\leq 1:\\ \Box''\coloneqq\Box+(-1)^c (a,b)\in\lambda}}  \frac{(-1)^{a+b}(\delta_{\Box',\Box''}-\delta_{\Box',\Box})} {at_1+bt_2+(-1)^c (\B_{\Box''}-\B_{\Box})}.
		\]
		If one removes the first summand in the above, then the matrix would have determinant 0. Indeed, it is easily checked that $(1,\ldots,1)$ is in the kernel. By looking at the Leibniz formula for the determinant it therefore follows that 
		\[
			\begin{Vmatrix}
				M
			\end{Vmatrix}
			= \begin{Vmatrix}
				\left(\frac{\partial \widetilde{v}^\lambda_\Box(\Bf) / \partial \B_{\Box'} }{\widetilde{v}^\lambda_\Box(\Bf)}\right)_{(0,0)\neq\Box,\Box'\in\lambda}
			\end{Vmatrix}.
		\]
		We now note that the above expression is a minor of the Laplacian of the undirected weighted graph $\Gamma_\lambda$ defined as follows: \\
		Its vertices are given by the boxes in $\lambda$ and two $\Box,\Box'\in\lambda$ are connected by an edge if there are $0\leq a,b,c\leq 1$ with $(a,b)\neq (0,0)$ so that $\Box' = \Box + (-1)^c (a,b)$. In this case the weight of the corresponding edge is given by 
		\[
			w_{\Box,\Box'} = \frac{(-1)^{a+b+1}} {at_1+bt_2+(-1)^c (\B_{\Box'}-\B_{\Box})}.
		\]
		In particular, the whole graph is a union of cycles of length 3 and for each such cycle consisting of the vertices $\Box,\Box',\Box''$ we have
		\begin{equation}\label{eqn: triang weights}
			w_{\Box,\Box'} w_{\Box',\Box''} + w_{\Box',\Box''} w_{\Box'',\Box} + w_{\Box'',\Box} w_{\Box,\Box'} = 0	
		\end{equation}
		or equivalently
		\begin{equation}\label{eqn: triang weights inverse}
			w_{\Box,\Box''}^{-1} + w_{\Box,\Box'}^{-1} + w_{\Box',\Box''}^{-1} = 0.
		\end{equation}
		We will furthermore call cycles consisting of vertices of the shape $(i,j),(i,j+1),(i+1,j+1)\in\lambda$ \textit{upper cycles}. 
		It follows from the weighted matrix tree theorem \cite[Theorem II.3.12]{moderngraphtheory} that
		\[
			\begin{Vmatrix}
				M
			\end{Vmatrix}
			= \sum_{\substack{T\subset \Gamma_{\lambda} \\ \text{spanning tree}}} \prod_{\substack{e = (\Box,\Box')\\ \text{edge in }T}} w_e.
		\]
		We will now define an permutation $\sigma$ (c.f. Figure \ref{fig:weirdTree}) 
		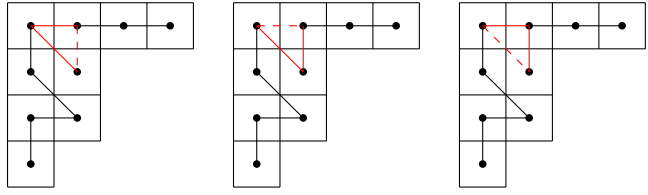
\begin{figure}[h]
			\begin{tikzpicture}[scale = 22/36]
			\filldraw[black] (0,0) circle (2pt) ;
			\filldraw[black] (1,0) circle (2pt);
			\filldraw[black] (2,0) circle (2pt);
			\filldraw[black] (3,0) circle (2pt);
			\filldraw[black] (0,-1) circle (2pt);
			\filldraw[black] (1,-1) circle (2pt);
			\filldraw[black] (0,-2) circle (2pt);
			\filldraw[black] (1,-2) circle (2pt);
			\filldraw[black] (0,-3) circle (2pt);
			\draw (-0.5,0.5) -- (3.5,0.5);
			\draw (-0.5,-0.5) -- (3.5,-0.5);
			\draw (-0.5,-1.5) -- (1.5,-1.5);
			\draw (-0.5,-2.5) -- (1.5,-2.5);
			\draw (-0.5,-3.5) -- (0.5,-3.5);
			\draw (-0.5,0.5) -- (-0.5,-3.5);
			\draw (0.5,0.5) -- (0.5,-3.5);
			\draw (1.5,0.5) -- (1.5,-2.5);
			\draw (2.5,0.5) -- (2.5,-0.5);
			\draw (3.5,0.5) -- (3.5,-0.5);
			\draw (0,0) -- (0,-1);
			\draw (0,-1) -- (1,-2);
			\draw (0,-2) -- (0,-3);
			\draw (0,-2) -- (1,-2);
			\draw (1,0) -- (2,0);
			\draw (2,0) -- (3,0);
			\draw[red] (0,0) --  (1,0) ;
			\draw[red] (0,0) -- (1,-1);
			\draw[red, dashed] (1,0) -- (1,-1);
		\end{tikzpicture}
		\ \ \
		\begin{tikzpicture}[scale = 22/36]
			\filldraw[black] (0,0) circle (2pt) ;
			\filldraw[black] (1,0) circle (2pt);
			\filldraw[black] (2,0) circle (2pt);
			\filldraw[black] (3,0) circle (2pt);
			\filldraw[black] (0,-1) circle (2pt);
			\filldraw[black] (1,-1) circle (2pt);
			\filldraw[black] (0,-2) circle (2pt);
			\filldraw[black] (1,-2) circle (2pt);
			\filldraw[black] (0,-3) circle (2pt);
			\draw (-0.5,0.5) -- (3.5,0.5);
			\draw (-0.5,-0.5) -- (3.5,-0.5);
			\draw (-0.5,-1.5) -- (1.5,-1.5);
			\draw (-0.5,-2.5) -- (1.5,-2.5);
			\draw (-0.5,-3.5) -- (0.5,-3.5);
			\draw (-0.5,0.5) -- (-0.5,-3.5);
			\draw (0.5,0.5) -- (0.5,-3.5);
			\draw (1.5,0.5) -- (1.5,-2.5);
			\draw (2.5,0.5) -- (2.5,-0.5);
			\draw (3.5,0.5) -- (3.5,-0.5);
			\draw (0,0) -- (0,-1);
			\draw (0,-1) -- (1,-2);
			\draw (0,-2) -- (0,-3);
			\draw (0,-2) -- (1,-2);
			\draw (1,0) -- (2,0);
			\draw (2,0) -- (3,0);
			\draw[red,dashed] (0,0) --  (1,0) ;
			\draw[red] (0,0) -- (1,-1);
			\draw[red] (1,0) -- (1,-1);
		\end{tikzpicture}
		\ \ \
		\begin{tikzpicture}[scale = 22/36]
			\filldraw[black] (0,0) circle (2pt) ;
			\filldraw[black] (1,0) circle (2pt);
			\filldraw[black] (2,0) circle (2pt);
			\filldraw[black] (3,0) circle (2pt);
			\filldraw[black] (0,-1) circle (2pt);
			\filldraw[black] (1,-1) circle (2pt);
			\filldraw[black] (0,-2) circle (2pt);
			\filldraw[black] (1,-2) circle (2pt);
			\filldraw[black] (0,-3) circle (2pt);
			\draw (-0.5,0.5) -- (3.5,0.5);
			\draw (-0.5,-0.5) -- (3.5,-0.5);
			\draw (-0.5,-1.5) -- (1.5,-1.5);
			\draw (-0.5,-2.5) -- (1.5,-2.5);
			\draw (-0.5,-3.5) -- (0.5,-3.5);
			\draw (-0.5,0.5) -- (-0.5,-3.5);
			\draw (0.5,0.5) -- (0.5,-3.5);
			\draw (1.5,0.5) -- (1.5,-2.5);
			\draw (2.5,0.5) -- (2.5,-0.5);
			\draw (3.5,0.5) -- (3.5,-0.5);
			\draw (0,0) -- (0,-1);
			\draw (0,-1) -- (1,-2);
			\draw (0,-2) -- (0,-3);
			\draw (0,-2) -- (1,-2);
			\draw (1,0) -- (2,0);
			\draw (2,0) -- (3,0);
			\draw[red] (0,0) --  (1,0) ;
			\draw[red,dashed] (0,0) -- (1,-1);
			\draw[red] (1,0) -- (1,-1);
		\end{tikzpicture}
		\caption{On the left a spanning tree $T$ in $\Gamma_\lambda$ for $\lambda = (4,2,2,1)$, in the middle $\sigma(T)$ and to the right $\sigma^2(T)$. The red edges belong to the unique upper cycle intersecting $T$ along two edges.}
		\label{fig:weirdTree}
		\end{figure}
		on the set of all such spanning trees which will help us remove some of the summands. For this we first fix an ordering on the set of all upper cycles. For a given tree $T$ we then let $C(T)$ be the first upper cycle so that two of its edges are in $T$. If no such cycle exists we set $\sigma(T) \coloneqq T$. Otherwise let $e_1,e_2,e_3$ be the edges of $C(T)$ named in counter-clockwise direction so that $e_1$ is not in $T$ whereas $e_2,e_3$ are in $T$. Removing $e_2$ from $T$ will turn the tree into a forest consisting of two connected components - one containing $e_3$ and the other containing the vertex incident to both $e_1$ and $e_2$. Hence by adding $e_1$ into the subgraph we obtain another spanning tree different from $T$ which we denote by $\sigma(T)$. Since the upper cycles are pairwise edge-disjoint we get $C(\sigma(T)) = C(T)$ if $\sigma(T)\neq T$ and hence one easily sees that $\sigma$ has order $3$. Moreover it follows from \eqref{eqn: triang weights} that those spanning trees for which $\sigma(T)\neq T$ cancel in the sum, which yields
		\[
		\begin{Vmatrix}
			M
		\end{Vmatrix}
		= \sum_{\substack{T\subset \Gamma_{\lambda} \\ \text{spanning tree}\\ \sigma(T) = T}} \prod_{\substack{e = (\Box,\Box')\\ \text{edge in }T}} w_e.
		\]
		We now claim that those trees $T$ with $\sigma(T)=T$ are the same as sets of non-diagonal edges which have exactly one edge in common with every upper cycle and contain each horizontal or vertical edge that is not part of an upper cycle. If we assume this to be true, then it follows from \eqref{eqn: triang weights inverse} that
		\begin{align*}
			&\sum_{\substack{T\subset \Gamma_{\lambda} \\ \text{spanning tree}\\ \sigma(T) = T}} \prod_{\substack{e = (\Box,\Box')\\ \text{edge in }T}} w_e \\
			&= \prod_{(i,j),(i,j+1)\in\lambda} w_{(i,j),(i,j+1)}\prod_{(i,j),(i+1,j)\in\lambda} w_{(i,j),(i+1,j)}\prod_{\substack{C = (e_{hor},e_{vert},e_{diag}) \\ \text{ upper cycle}}} (w_{e_{hor}}^{-1} + w_{e_{vert}}^{-1})\\
			& = \prod_{(i,j),(i,j+1)\in\lambda} w_{(i,j),(i,j+1)}\prod_{(i,j),(i+1,j)\in\lambda} w_{(i,j),(i+1,j)}\prod_{\substack{C = (e_{hor},e_{vert},e_{diag}) \\ \text{ upper cycle}}} (-w_{e_{diag}}^{-1})
		\end{align*}
		which is what we wanted to show.\\
		To show the characterization of trees with $\sigma(T)=T$ we first note that each subset as described above is a spanning tree of $\Gamma_\lambda$. Indeed, there are $\sum_{i=1}^{l(\lambda)-1} (\lambda_i-1)$ many upper cycles and $l(\lambda)-1+\sum_{i=0}^{l(\lambda)-1} (\lambda_i-\lambda_{i+1})$ many edges not part of an upper cyle, hence any set of edges as described above has $\lvert\lambda\rvert-1$ edges and is incident to all boxes, which makes it a spanning tree if it is connected. And indeed one easily sees that each box $(i,j)\in\lambda$ in the subgraph is connected to either $(i+1,j)$ or $(i,j+1)$ and hence each vertex is connected to $(l(\lambda)-1,0)$. \\
		Conversely, the above calculation shows that each spanning tree $T$ with $\sigma(T)=T$ must have exactly one edge in common with any upper cycle and must contain all edges not part of an upper edge. If it contained a diagonal edge $e$, then we could choose it so that there is no other diagonal edge below the diagonal line going through the vertices of $e$. Removing $e$ from $T$ would then make its two adjacent vertices $(i,j)$ and $(i+1,j+1)$ lie in two distinct connected components. However, by the same argument as above, both boxes must be connected to $(l(\lambda)-1,0)$ which is a contradiction. This concludes the proof.
	\end{proof}
	Next, we relate the Jacobian matrices of two kinds of Bethe equations arising during the proof of Theorem \ref{thm: mainthmproof}.
	\begin{lemma}\label{prop: bethematrices}
		For a fixed partition $\lambda$, $\Bf = (\B_{\Box})_{\Box\in\lambda}$ and $\bu = (u_{\lambda/\mu})_{\lambda/\mu}$ free variables we set 
		\[
			M_{\text{Bethe}}(\Bf) \coloneqq \left(\frac{\partial F_{\Box'}(\Bf) / \partial \B_{\Box}}{F_{\Box'}(\Bf)}\right)_{\Box,\Box'\in\lambda}
		\]
		and
		\[
			M_{\text{skew}}(\bu) \coloneqq \left(\frac{\delta_{\lambda/\mu,\lambda/\mu'}}{u_{\lambda/\mu}}+\frac{\partial \overline{F}^\lambda_{\lambda/\mu'}(\overline{\Bf}^\lambda(\bu)) / \partial u_{\lambda/\mu}}{\overline{F}^\lambda_{\lambda/\mu'}(\overline{\Bf}^\lambda(\bu))} \right)_{\lambda/\mu,\lambda'/\mu'}.
		\]
		We now claim that given $\bv = (\bv_\Box)_{\Box\in\lambda}$ and $\bu(\bv) = (u_{\lambda/\mu}(\bv))_{\lambda/\mu}$ with $u_{\lambda/\mu}(\bv)\coloneqq \prod_{\Box\in \lambda/\mu} v_\Box$ we have
		\begin{align}\label{eqn: detclaim}
			\prod_{\lambda/\mu} u_{\lambda/\mu}(\bv)\cdot \begin{Vmatrix} M_\text{skew}(\bu(\bv))\end{Vmatrix} = \prod_{\Box\in\lambda} v_\Box \cdot \begin{Vmatrix} \left(\frac{\partial \widetilde{\B}^\lambda_\Box(\bv)}{\partial v_{\Box'}}\right)_{\Box,\Box'\in\lambda} \end{Vmatrix}\cdot\begin{Vmatrix} M_\text{Bethe}(\widetilde{\Bf}^\lambda(\bv))\end{Vmatrix}
		\end{align}
		and for $Q$ the matrix
		\[
		Q = \left(\left[\Box\in\lambda/\mu\right]\right)_{\lambda/\mu,\Box}
		\]
		we have
		\[
			Q^T\cdot M_\text{skew}(\bu(\bv))^{-1} \cdot Q =  M_\text{Bethe}(\widetilde{\Bf}^\lambda(\bv))^{-1}.
		\]
	\end{lemma}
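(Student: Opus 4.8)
The plan is to realize both $M_{\text{Bethe}}$ and $M_{\text{skew}}$ as low-rank modifications of one common $d\times d$ matrix and then to invoke the Woodbury and Sylvester determinant identities. Throughout I would write $D_u = \mathrm{diag}\left(u_{\lambda/\mu}(\bv)\right)$ and $D_v = \mathrm{diag}(v_\Box)$, and introduce
\[
	\widehat M \coloneqq \left(\frac{\partial \log\widetilde F^\lambda_{\Box'}(\Bf)}{\partial \B_\Box}\right)_{\Box,\Box'\in\lambda}, \qquad V \coloneqq \left(\frac{\partial \log\widetilde v^\lambda_{\Box'}(\Bf)}{\partial \B_\Box}\right)_{\Box,\Box'\in\lambda},
\]
both evaluated at $\Bf = \widetilde\Bf^\lambda(\bv) = \overline\Bf^\lambda(\bu(\bv))$.

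First I would record three elementary identities. Since $\overline\B^\lambda_{(i,j)}(\bu) = -it_1-jt_2 + \sum_{(i,j)\in\lambda/\mu}u_{\lambda/\mu}$, differentiating gives $\partial\overline\B^\lambda_\Box/\partial u_{\lambda/\mu} = [\Box\in\lambda/\mu]$, so that $Q$ is literally the Jacobian of $\overline\Bf^\lambda$. The rational-function identity $F_\Box = \widetilde v^\lambda_\Box\cdot\widetilde F^\lambda_\Box$ (which follows from Lemma~\ref{lemma: box iso bethe}, or by directly comparing the definitions of $f_\Box,g_{\Box,\Box'}$ with $\widetilde f_\Box,\widetilde g_{\Box,\Box'},\widetilde v^\lambda_\Box$) yields, after taking logarithmic derivatives, the splitting $M_{\text{Bethe}} = V + \widehat M$. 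Finally, combining $\overline F^\lambda_{\lambda/\mu} = \prod_{\Box\in\lambda/\mu}\widetilde F^\lambda_\Box$ with the chain rule through the previous Jacobian produces
\[
	M_{\text{skew}}(\bu(\bv)) = D_u^{-1} + Q\,\widehat M\,Q^T,
\]
the Kronecker term contributing $D_u^{-1}$ and the logarithmic-derivative term contributing $Q\widehat M Q^T$.

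The crux is the auxiliary $d\times d$ matrix $R \coloneqq Q^T D_u Q$, with entries $R_{\Box,\Box'} = \sum_{\lambda/\mu\ni \Box,\Box'}u_{\lambda/\mu}$, which is manifestly symmetric. A one-line computation shows $\partial\widetilde\B^\lambda_\Box/\partial v_{\Box'} = R_{\Box,\Box'}/v_{\Box'}$, i.e. the Jacobian $J \coloneqq \left(\partial\widetilde\B^\lambda_\Box/\partial v_{\Box'}\right)_{\Box,\Box'}$ equals $R D_v^{-1}$; in particular $\detty{R} = \prod_\Box v_\Box\cdot\detty{J}$. Because $\widetilde v^\lambda$ and $\widetilde\Bf^\lambda$ are mutually inverse by Lemma~\ref{lemma: box iso bethe}, the inverse function theorem gives $R^{-1} = D_v^{-1}J^{-1}$, and transposing together with $\widetilde v^\lambda_{\Box'}(\widetilde\Bf^\lambda(\bv)) = v_{\Box'}$ identifies $(R^{-1})^T = V$. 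Since $R$ is symmetric this forces $R^{-1} = V$. I expect this last identity to be the main obstacle: it is precisely where the rank mismatch between the many skew-partition indices of $M_{\text{skew}}$ and the $d$ box indices of $M_{\text{Bethe}}$ is absorbed, and it is only the symmetry of $R$ that pins down $R^{-1}$ as $V$ rather than its transpose.

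Granting $R^{-1} = V$, both claims become formal. For the matrix identity, Woodbury applied to $M_{\text{skew}} = D_u^{-1} + Q\widehat M Q^T$ gives
\[
	Q^T M_{\text{skew}}^{-1}Q = R - R(\widehat M^{-1}+R)^{-1}R = (R^{-1}+\widehat M)^{-1} = (V+\widehat M)^{-1} = M_{\text{Bethe}}^{-1},
\]
where the second equality is the push-through identity and the last two use $R^{-1}=V$ and $M_{\text{Bethe}}=V+\widehat M$. For the determinant identity, factoring out $D_u^{-1}$ and applying Sylvester's identity gives
\[
	\prod_{\lambda/\mu}u_{\lambda/\mu}\cdot\detty{M_{\text{skew}}} = \detty{I + \widehat M R} = \detty{R}\cdot\detty{R^{-1}+\widehat M} = \detty{R}\cdot\detty{M_{\text{Bethe}}},
\]
and substituting $\detty{R} = \prod_\Box v_\Box\cdot\detty{J}$ yields exactly \eqref{eqn: detclaim}. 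All of these manipulations are identities of rational functions in $\bv$ (equivalently in $\Bf$), so no genericity beyond the standing admissibility assumption needs to be tracked.
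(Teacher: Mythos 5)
Your proposal is correct, and it takes a genuinely different route from the paper. The paper proves both claims by explicit block Gaussian elimination: it splits the skew-partition index set into the ``principal'' skew partitions $\overline{\Box}=\Set{\Box'\mid \Box\le\Box'}$ and the rest, builds conjugating matrices $A'$, $B$, $C$ out of the incidence matrix $\left([\Box\ge\Box']\right)_{\Box,\Box'}$ and its inverse, and shows that $C\,M_{\text{skew}}\,A'B$ is block upper triangular with diagonal blocks $D\cdot M_{\text{Bethe}}$ and $\mathrm{diag}(1/u_{\lambda/\mu})$; both the determinant identity and the compression $Q^TM_{\text{skew}}^{-1}Q=M_{\text{Bethe}}^{-1}$ are then read off from this normal form. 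You instead exhibit the structural reason for the statement: $M_{\text{skew}}=D_u^{-1}+Q\widehat MQ^T$ and $M_{\text{Bethe}}=V+\widehat M$ with the same core $\widehat M$, reduce everything to the single identity $V=R^{-1}$ for $R=Q^TD_uQ$ (which you correctly derive from the inverse function theorem applied to the mutually inverse maps of Lemma \ref{lemma: box iso bethe} together with the symmetry of $R$ --- this is indeed the one nontrivial input), and then conclude by Woodbury/push-through and Sylvester. I checked the decomposition $M_{\text{skew}}=D_u^{-1}+Q\widehat MQ^T$ (via $\overline F^\lambda_{\lambda/\mu}=\prod_{\Box\in\lambda/\mu}\widetilde F^\lambda_\Box$ and the chain rule through $Q$), the identification $J=RD_v^{-1}$, and the final bookkeeping; all are right, and the argument is shorter and more conceptual than the paper's. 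Two small points to tidy: the displayed chain $R-R(\widehat M^{-1}+R)^{-1}R=(R^{-1}+\widehat M)^{-1}$ passes through $\widehat M^{-1}$, which you should avoid by using the push-through form $Q^T(I+D_uQ\widehat MQ^T)^{-1}=(I+R\widehat M)^{-1}Q^T$ directly (then only invertibility of $R$ and $M_{\text{skew}}$ is needed); and the rational-function identity $F_\Box=\widetilde v^\lambda_\Box\cdot\widetilde F^\lambda_\Box$ does require the direct sign comparison you allude to (or the observation that it holds on the dense image of the open immersion of Lemma \ref{lemma: box iso bethe}), but the paper relies on exactly the same identity, so this is not a gap.
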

	\begin{proof}
		From now on we abbreviate $M = M_{\text{skew}}(\bu(\bv))$ and $N = M_{\text{Bethe}}(\widetilde{\Bf}^\lambda (\bv))$. 
		First we show the claim about determinants. 
		Consider the matrix
		\[
			\left( [\Box\geq \Box']\right)_{\Box,\Box'\in\lambda}
		\]
		with $[P]$ as in \eqref{eqn: [P]}.
		Taking a total refinement of the partial ordering on boxes one can realize it as a lower triangular matrix with $1$'s on the diagonal and hence it is invertible of determinant 1. We will write its inverse as $A = (a_{\Box,\Box'})_{\Box,\Box'}$ and extend it to a matrix
		\[
		A' = 
		\begin{blockarray}{ccc}
			& Box & not \ Box \\
			\begin{block}{c(cc)}
				Box & A & 0\\
				not \ Box & 0 & \mathbb{I}\\
			\end{block}
		\end{blockarray}
		\]
		where $\mathbb{I}$ is the identity matrix and \textit{Box} denote the connected skew partitions of $\lambda$ that are of the shape 
		\[
			\overline{\Box} \coloneqq \Set{\Box' | \Box\leq \Box'}
		\]
		for some box $\Box\in\lambda$ and \textit{not Box} the other ones.
		Since we can also write
		\[
			M = \left(\frac{\delta_{\lambda/\mu,\lambda/\mu'}}{u_{\lambda/\mu}}+\sum_{\Box\in \lambda/\mu'}\frac{\partial \widetilde{F}^\lambda_\Box(\overline{\Bf}^\lambda(\bu)) / \partial u_{\lambda/\mu}}{\widetilde{F}^\lambda_\Box(\overline{\Bf}^\lambda(\bu))} \right)_{\lambda/\mu,\lambda/\mu'}
		\]
		this gives 
		\begin{align*}
			&M \cdot A' =\\& \begin{blockarray}{ccc}
				& Box & not \ Box \\
				\begin{block}{c(cc)}
					Box & \left(\frac{a_{\Box,\Box'}}{u_{\overline{\Box}}}+\frac{\partial \widetilde{F}^\lambda_{\Box'}(\overline{\Bf}^\lambda(\bu)) / \partial u_{\overline{\Box}}}{\widetilde{F}^\lambda_{\Box'}(\overline{\Bf}^\lambda(\bu))} \right)_{\Box,\Box'} & \left(\sum_{\Box'\in \lambda/\mu}\frac{\partial \widetilde{F}^\lambda_{\Box'}(\overline{\Bf}^\lambda(\bu)) / \partial u_{\overline{\Box}}}{\widetilde{F}^\lambda_{\Box'}(\overline{\Bf}^\lambda(\bu))}\right)_{\Box,\lambda/\mu}\\
					not \ Box & \left(  \frac{\partial \widetilde{F}^\lambda_\Box(\overline{\Bf}^\lambda(\bu)) / \partial u_{\lambda/\mu}}{\widetilde{F}^\lambda_\Box(\overline{\Bf}^\lambda(\bu))} \right)_{\lambda/\mu,\Box} & \left( \frac{\delta_{\lambda/\mu,\lambda/\mu'}}{u_{\lambda/\mu}}+\sum_{\Box\in \lambda/\mu'}\frac{\partial \widetilde{F}^\lambda_\Box(\overline{\Bf}^\lambda(\bu)) / \partial u_{\lambda/\mu}}{\widetilde{F}^\lambda_\Box(\overline{\Bf}^\lambda(\bu))} \right)_{\lambda/\mu,\lambda/\mu'}\\
				\end{block}
			\end{blockarray}
		\end{align*}
		Using further row operations we can get rid of most terms in the \textit{not Box} rows. More precisely writing
		\[
		B = 
		\begin{blockarray}{ccc}
			& Box & not \ Box \\
			\begin{block}{c(cc)}
				Box & \mathbb{I} & \left(-[\Box\in \lambda/\mu]\right)_{\Box,\lambda/\mu}\\
				not \ Box & 0 & \mathbb{I}\\
			\end{block}
		\end{blockarray}
		\]
		we get
		\[
		M \cdot A' \cdot B = \begin{blockarray}{ccc}
			& Box & not \ Box \\
			\begin{block}{c(cc)}
				Box & \left(\frac{a_{\Box,\Box'}}{u_{\overline{\Box}}}+\frac{\partial \widetilde{F}^\lambda_{\Box'}(\overline{\Bf}^\lambda(\bu)) / \partial u_{\overline{\Box}}}{\widetilde{F}^\lambda_{\Box'}(\overline{\Bf}^\lambda(\bu))} \right)_{\Box,\Box'} &\left(\frac{1}{u_{\overline{\Box}}}\sum_{\Box'\in \lambda/\mu} a_{\Box',\Box}\right)_{\Box,\lambda/\mu} \\
				not \ Box & \left(  \frac{\partial \widetilde{F}^\lambda_\Box(\overline{\Bf}^\lambda(\bu)) / \partial u_{\lambda/\mu}}{\widetilde{F}^\lambda_\Box(\overline{\Bf}^\lambda(\bu))} \right)_{\lambda/\mu,\Box}& \left( \frac{\delta_{\lambda/\mu,\lambda/\mu'}}{u_{\lambda/\mu}} \right)_{\lambda/\mu,\lambda/\mu'}\\
			\end{block}
		\end{blockarray}
		\]
		Now we subsitute $\bu = \bu(\bv)$ into the matrix and using
		\[
			\frac{\partial}{\partial v_\Box} = \sum_{\Box\in \lambda/\mu} \frac{u_{\lambda/\mu}}{v_\Box} \frac{\partial}{\partial u_{\lambda/\mu}}
		\]
		we see that
		\[
		 C\cdot M\cdot A'\cdot B = \begin{blockarray}{ccc}
			& Box & not \ Box \\
			\begin{block}{c(cc)}
				Box & N' & \left( \frac{\partial \widetilde{F}^\lambda_\Box(\overline{\Bf}^\lambda(\bu)) / \partial u_{\lambda/\mu}}{\widetilde{F}^\lambda_\Box(\overline{\Bf}^\lambda(\bu))} \right)_{\Box,\lambda/\mu}\\
				not \ Box & 0 & \left( \frac{\delta_{\lambda/\mu,\lambda/\mu'}}{u_{\lambda/\mu'}} \right)_{\lambda/\mu,\lambda/\mu'}\\
			\end{block}
		\end{blockarray}
		\]
		for
		\[
		C = 
		\begin{blockarray}{ccc}
			& Box & not \ Box \\
			\begin{block}{c(cc)}
				Box & \left([\Box\geq \Box']\frac{u_{\overline{\Box'}}}{v_{\Box}}\right)_{\Box,\Box'} & \left([\Box\in \lambda/\mu]\frac{u_{\lambda/\mu}}{v_{\Box}}\right)_{\Box,\lambda/\mu}\\
				not \ Box & 0 & \mathbb{I}\\
			\end{block}
		\end{blockarray}
		\]
		and 
		\[
		N' = \left(\frac{\delta_{\Box,\Box'}}{v_{\Box}}+\frac{\partial \widetilde{F}^\lambda_{\Box'}(\widetilde{\Bf}^\lambda(\bv)) / \partial v_{\Box}}{\widetilde{F}^\lambda_{\Box'}(\widetilde{\Bf}^\lambda(\bv))} \right)_{\Box,\Box'}
		\]
		Finally note that because of Lemma \ref{lemma: box iso bethe} we have $F_{\Box}(\widetilde{\Bf}^\lambda(\bv)) = v_{\Box}\cdot \widetilde{F}^\lambda_{\Box}(\widetilde{\Bf}^\lambda(\bv))$ and therefore 
		\[
			N' = D \cdot N 
		\]
		with
		\[
			D = \left(\frac{\partial \widetilde{\B}^\lambda_{\Box'} (\bv)}{\partial v_{\Box}}\right)_{\Box,\Box'\in\lambda}.
		\]
		Hence we get
		\begin{align*}
			\detty{M} = \detty{C}^{-1}\detty{CMA'B}
			= 
			\detty{D} \detty{N}
			\prod_{\lambda/\mu} u_{\lambda/\mu}^{-1} \prod_{\Box\in\lambda}v_{\Box}
		\end{align*}
		which proves the first claim.
		For the second claim we need to show
		\[
			Q^T M^{-1} Q = N^{-1}
		\]
		for 
		\[
			Q = \begin{blockarray}{cc}
			& Box \\
			\begin{block}{c(c)}
				Box & \left([\Box\leq \Box']\right)_{\Box,\Box'} \\
				not \ Box & \left([\Box\in \lambda/\mu]\right)_{\lambda/\mu,\Box} \\
			\end{block}
			\end{blockarray}
		\]
		Indeed, it is easy to see that 
		\[
			Q^T A' B= \begin{blockarray}{ccc}
			& Box & not \ Box \\
			\begin{block}{c(cc)}
				Box & \mathbb{I} & 0 \\
			\end{block}
			\end{blockarray}
		\]
		and 
		\[
			C Q= \begin{blockarray}{cc}
			& Box \\
			\begin{block}{c(c)}
				Box & D\\
				not \ Box &\left([\Box\in \lambda/\mu]\right)_{\lambda/\mu,\Box}\\
			\end{block}
		\end{blockarray}
		\]
		which implies
		\[
			Q^T M^{-1} Q = Q^T A'B( CMA'B)^{-1} C Q = N^{-1}
		\]
		as desired.
	\end{proof}
	\printbibliography
\end{document}